\theoremstyle{plain}
\newtheorem{theo}{Theorem}[section]
\newtheorem{sat}[theo]{Proposition}
\newtheorem{lem}[theo]{Lemma}
\newtheorem{prop}[theo]{Proposition}
\newtheorem{korr}[theo]{Corollary}
\newtheorem{de}[theo]{Definition}
\newtheorem{example}[theo]{Example}
\newtheorem{remark}[theo]{Remark}
\newtheorem{remarks}[theo]{Remarks}
\newcommand{\BQN}{\begin{eqnarray}}
\newcommand{\EQN}{\end{eqnarray}}
\newcommand{\BQNY}{\begin{eqnarray*}}
\newcommand{\EQNY}{\end{eqnarray*}}
\newcommand{\BS}{\begin{sat}}
\newcommand{\ES}{\end{sat}}
\newcommand{\BT}{\begin{theo}}
\newcommand{\ET}{\end{theo}}
\newcommand{\BK}{\begin{korr}}
\newcommand{\EK}{\end{korr}}
\newcommand{\BD}{\begin{de}}
\newcommand{\ED}{\end{de}}
\newcommand{\BIT}{\begin{itemize}}
\newcommand{\EIT}{\end{itemize}}
\newcommand{\BDI}{\begin{description}}
\newcommand{\EDI}{\end{description}}
\newcommand{\BRM}{\begin{remarks}}
\newcommand{\ERM}{\end{remarks}}
\newcommand{\BEL}{\begin{lem}}
\newcommand{\EEL}{\end{lem}}
\newcommand{\nelem}[1]{{Lemma \ref{#1}}}
\newcommand{\neprop}[1]{{Proposition \ref{#1}}}
\newcommand{\netheo}[1]{{Theorem \ref{#1}}}
\newcommand{\abs}[1]{\left\lvert #1 \right\rvert}
\newcommand{\E}[1]{\mathbb{E}\left\{ #1\right\}}
\newcommand{\pk}[1]{\mathbb{P} \left\{ #1 \right \} }
\newcommand{\R}{\mathbb{R}}
\newcommand{\N}{\mathbb{N}}
\newcommand{\ldot}{,\ldots,}
\newcommand{\limit}[1]{\lim_{#1 \to \infty}}
\newcommand{\COM}[1]{}
\newcommand{\QED}{\hfill $\Box$}
\newcommand{\kb}[1]{\boldsymbol{#1}}
\newcommand{\vk}[1]{\kb{#1}}
\def\td{\text{\rm d}}
\DeclareMathOperator{\Var}{Var}
\DeclareMathOperator{\Cov}{Cov}
\DeclareMathOperator{\rank}{rank}
\DeclareMathOperator{\sign}{sign}
\definecolor{c20}{rgb}{0.,0.7,0.}
\definecolor{c30}{rgb}{0.,0.,1.}
\definecolor{c40}{rgb}{1,0.1,0.7}
\definecolor{c50}{rgb}{1,0,0}
\definecolor{c60}{rgb}{1,0.9,0.1}
\def\kd#1{{\textcolor{black}{#1}}}
\def\kk#1{{\textcolor{black}{#1}}}
\def\kkk#1{{\textcolor{black}{#1}}}
\begin{document}

\title{Extremes of Brownian Decision Trees}

\author{Krzysztof D\c{e}bicki}
\address{Krzysztof D\c{e}bicki, Mathematical Institute, University of Wroc\l aw,
	pl. Grunwaldzki 2/4, 50-384 Wroc\l aw,
	Poland}
\email{Krzysztof.Debicki@math.uni.wroc.pl}

\author{Pavel Ievlev}
\address{Pavel Ievlev, Department of Actuarial Science, %\\Faculty of Business and Economics\\
	University of Lausanne,\\
	UNIL-Dorigny, 1015 Lausanne, Switzerland
}
\email{Pavel.Ievlev@unil.ch}

\author{Nikolai Kriukov}
\address{Nikolai Kriukov, Department of Actuarial Science, %\\Faculty of Business and Economics\\
	University of Lausanne,\\
	UNIL-Dorigny, 1015 Lausanne, Switzerland
}
\email{Nikolai.Kriukov@unil.ch}

\bigskip

\date{\today}
\maketitle

{\bf Abstract:} We consider a Brownian motion with linear drift that splits at
fixed time points into a fixed number of branches, which may depend on the
branching point. For this process, which we shall refer to as the Brownian decision tree, we investigate the exact asymptotics of high exceedance probabilities in finite time horizon, including: the probability that at least one branch exceeds some high threshold, the probability that the largest distance between branches gets large
and the probability that all branches simultaneously exceed some high barrier.
Additionally, we find the asymptotics for the probability that all branches of at least one of $M$ independent Brownian decision trees exceed a high threshold.

{\bf Key Words:} Brownian motion; Branching Brownian motion; Brownian decision tree; Brownian decision forest; simultaneous ruin probability; Multivariate Gaussian extremes

{\bf AMS Classification:} Primary 60G15, 60G70; secondary 60J80

\section{Introduction}
\kk{We} investigate the exact asymptotics of high exceedance
probabilities for the process, which we shall refer to as \textit{the Brownian decision
	tree}. This process is a close relative of the standard
\kk{{\it branching Brownian motion (BBm)}} and it can be informally described as follows: at time
\( t = 0 \) a Brownian motion \( B ( t ) \) sets off from zero and runs freely
until a non-random time \( \tau_1 > 0 \), at which it splits into \( N_1 \geq 1 \)
\kd{conditionally on the common past independent Brownian motions}
%Symbolically, we write
\begin{equation*}
	B ( t )
	\xmapsto[\text{at } \tau_1]{\text{branching}}
	\vk{B}_1 ( t )
	=
	\begin{pmatrix}
		B ( \tau_1 ) \\
		\vdots \\
		B ( \tau_1 )
	\end{pmatrix}
	+\vk{B}^{*}_1 ( t - \tau_1 ),
\end{equation*}
where $\vk{B}^*_1$ is an \( \R^{N_1} \)-valued Brownian motion. The resulting
vector-valued process again runs freely up to some time point \( \tau_2 > \tau_1 \),
where \textit{each of its components} \kk{splits} again into \( N_2 \geq 1 \) particles
\begin{equation*}
	\vk{B}_1 ( t )
	\xmapsto[\text{at } \tau_2]{\text{branching}}
	\vk{B}_2 ( t )
	=
	\begin{pmatrix}
		\vk{B}_1 ( \tau_2 ) \\
		\vdots \\
		\vk{B}_1 ( \tau_2 )
	\end{pmatrix}
	+\vk{B}_2^{*} ( t - \tau_2 )
\end{equation*}
and the construction recursively repeats.

There are two differences between this and the classical BBm model as presented,
for example, in the seminal paper by Bramson~\cite{bramson1978maximal}. \kk{Firstly}, the branching times are
non-random, whereas in the standard model the distances between them are
exponentially distributed. Secondly, all branches (that is, the components of
the vector-valued process described above) undergo splitting into the same
number of offsprings and at the same time (in the classical
BBm model each branch has its own branching clock). This, along with the usual
description of the classical BBm model as a process indexed by a tree, suggests
the name \textit{Brownian decision trees}, where the word ``decision'' refers to the
specific type of trees branching at the same points and into the same amount of
branches.

The main findings of the paper are collected in Section \ref{sec:main-results},
which we begin with two preliminary results.
In Section \ref{s.1} we consider the exact asymptotics of the probability that at least one branch of
the Brownian decision tree with drift exceeds $u$, as $u\to\infty$, that is,
\begin{eqnarray}
\pk{\exists \, t\in[0,T], \, \exists \, \gamma\in\Gamma \colon B_\gamma(t)-ct>u},\label{p1}
\end{eqnarray}
where \( \Gamma =\{0,\ldots,n-1\}\) is the set of indices of the decision tree branches and
\( c \in \mathbb{R} \) is a \kk{deterministic constant.
In Theorem \ref{single_ruin} we show that (\ref{p1}) is asymptotically equal to the product of
the number of branches at time $T$ and the probability that a single Brownian motion with drift $c$
crosses level $u$ in time interval $[0,T]$.}

A similar approach to the above can be applied for the exact asymptotics of the largest distance between the
branches
\begin{eqnarray}
\pk{\exists \, t\in[0,T], \exists \, \gamma_1,\gamma_2\in\Gamma \colon B_{\gamma_1}(t)-B_{\gamma_2}(t)>u},\label{p2}
\end{eqnarray}
as $u\to\infty$, which is derived in Theorem \ref{diameter_1}.

In Section \ref{s.3} we focus on
the
%The main result of the paper, Theorem~\ref{main}, gives the exact asymptotics of the
probability
\begin{equation}\label{eq:1}
	\mathbb{P} \left\{
		\exists \, t \in [ 0, T ] \ \forall \, \gamma \in \Gamma \colon
		B_{\gamma} ( t ) - c t  > u
	\right\}
\end{equation}
that \kd{for some $t\in[0,T]$ all branches exceed threshold $u$}.
%where \( \Gamma =\{0,\ldots,n-1\}\) is the set of indices of the decision tree branches and
%\( c \in \mathbb{R} \) is a \kk{deterministic constant.} %independent of \( \gamma \).
%It turns out that
This problem is much more complicated and needs a more subtle approach than used in the analysis
of (\ref{p1}) and (\ref{p2}).
\kkk{In order to get the exact asymptotics of (\ref{eq:1})
we develop the technique introduced in~\cite{MR4127347} for extremes of centered vector-valued Gaussian processes to the branching model considered in this contribution.}
%Its solution relies on identification of
%a small, depending on $u$,  interval  $I_u\subset[0,T]$ that contains $T$ such that
%\[
%	\mathbb{P} \left\{
%		\exists \, t \in [ 0, T ] \ \forall \, \gamma \in \Gamma \colon
%		B_{\gamma} ( t ) - c t  > u
%	\right\}
%\sim
%	\mathbb{P} \left\{
%		\exists \, t \in I_u \ \forall \, \gamma \in \Gamma \colon
%		B_{\gamma} ( t ) - c t  > u
%	\right\}
%\]
%and then to apply
%the vector-valued Gaussian extremes' technique, as developed e.g. in~\cite{MR4127347},
%to find the precise asymptotics of the above, where the contribution of the length of $I_u$
%on the asymptotics of (\ref{eq:1}) is by some
%constant that can be regarded as a vector-valued analog of the classical
%Pickands-Piterbarg constant that is usually present in the asymptotical results for supremas of Gaussian processes and %fields;
%see e.g. \cite{Pit96}.
The main result of this section is displayed in Theorem \ref{main}, which is
supplemented by an extension of Korshunov-Wang inequality~\cite{korshunov2020tail,MR4376582,MR4467243} (see
\neprop{Korsh}) where a tight upper bound for  (\ref{eq:1}), that is valid for all $u>0$, is derived.
Complementary to the above findings,
we investigate the asymptotics of (\ref{eq:1}) for
a version of Brownian decision tree
with random numbers of offsprings (Corollary \ref{random_N}) and
the
limiting distribution of the corresponding conditional \kd{exceedance} times
(Corollary \ref{ruintime}).

In Section~\ref{sec:forest} we  present asymptotic results %of a similar fashion to (\ref{eq:1}) but
for the process which we
call \textit{Brownian decision forest}, \kd{that consists of}  %Briefly speaking, a Brownian decision forest is
a family of \( M \) independent Brownian decision trees \( \vk{B}_{\Gamma_i} \),
which grow from different points \( x_i \in \mathbb{R} \). % see  for the precise definition.
For this purpose we introduce a partial order on the set of tuples
\( ( \vk{\tau}, \vk{N}, c, x ) \), determining the trees of a forest, such that the
trees which are high in this order are more likely to exceed the high barrier.
In Theorem~\ref{forest_ruin} we use this order to find the exact asymptotics of the
probability that all branches of at least one tree in a forest exceed some
high barrier simultaneously.

%In the literature, the question of finding the asymptotics of~\eqref{eq:1} as \( u \)
%goes to infinity has not been previously investigated.
We note that in the context of the classical branching Brownian motion most of the asymptotical
results focus either on the number of particles or the distribution of
the highest branch, which in our setup would be \( \max_{\gamma \in \Gamma} B_{\gamma} ( t ) \).
In both cases the limiting parameter is \( t \) approaching infinity. We refer
the reader to the classical papers~\cite{bramson1978maximal,lalley1987,chauvin1988kpp,arguin2013extremal,mallein2015maximal}.
\kkk{The two types of questions mentioned above have been investigated extensively in
the last decades for various versions of the classical BBm model. Among these
versions, the most popular is the Kesten's BBm with absorption (see the original
paper~\cite{kesten1978branching}). Other models include BBm in random~\cite{hou2022invariance,vcerny2022tightness} and periodic~\cite{D4}
environments, spatial selection such as in the Brownian bees model~\cite{siboni2021fluctuations,berestycki2022brownian,meerson2021persistent},
spatially-inhomogenous branching rates~\cite{Bocharov2013BranchingBM}, self-repulsive BBm~\cite{Bovier2021BranchingWithSelfRepulsion} and many
others.}

As it turns out,  the exact asymptotics of the counterpart of (\ref{eq:1}) for the classical BBm
is relatively simpler to obtain
than for the analyzed in this contribution Brownian decision trees.
Namely, it suffices to observe that the most probable scenario in this case
is that the BBm process does not manage to branch even once before hitting the high
barrier. Hence, the
probability~\eqref{eq:1} is asymptotically equivalent to the probability that
the first branching event happens after \( T \) times the one-dimensional ruin
probability of the Brownian motion. We present here the precise result, the
proof of which the reader can find in the Appendix.

\begin{prop}\label{classical}
	Let \( \vk{B} ( t ) \), \( t \geq 0 \) be the classical Branching Brownian
	motion as described above, with \( \tau \sim \operatorname{Exp} ( 1 ) \). Then,
	for any \( c \in \mathbb{R} \) we have
	\begin{equation*}
		\pk{ \exists \, t \in [ 0, T ] \ \forall \, \gamma \in \Gamma \colon B_{\gamma} ( t ) - c t > u }
		\sim
		e^{-T} \times
		\sqrt{\frac{2 T}{\pi}} \exp \left( -\frac{( u + c T )^2}{2T} \right).
	\end{equation*}
\end{prop}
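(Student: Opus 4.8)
The plan is to condition on the value of the first branching time $\tau$ of the classical BBm, which is $\operatorname{Exp}(1)$-distributed, and argue that the dominant contribution to the probability in \eqref{eq:1} comes from the event $\{\tau > T\}$, on which the process has not yet split and is therefore a single Brownian motion with drift $-c$ on $[0,T]$. On this event the inner event ``$\forall\,\gamma\in\Gamma\colon B_\gamma(t)-ct>u$'' reduces to ``$\exists\,t\in[0,T]\colon B(t)-ct>u$'' for a standard Brownian motion $B$, and by independence of $\tau$ and $B$ this contributes exactly
\begin{equation*}
	\pk{\tau>T}\cdot\pk{\exists\,t\in[0,T]\colon B(t)-ct>u}
	=
	e^{-T}\cdot\pk{\exists\,t\in[0,T]\colon B(t)-ct>u}.
\end{equation*}
The classical one-dimensional first-passage computation (reflection principle, or the known density of $\sup_{t\le T}(B(t)-ct)$) gives
\begin{equation*}
	\pk{\exists\,t\in[0,T]\colon B(t)-ct>u}
	\sim
	\sqrt{\tfrac{2T}{\pi}}\,\exp\!\left(-\tfrac{(u+cT)^2}{2T}\right),
	\qquad u\to\infty,
\end{equation*}
which already matches the claimed asymptotics. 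So the substance of the proof is showing that the complementary event $\{\tau\le T\}$ contributes only $o\!\left(e^{-T}\sqrt{2T/\pi}\,e^{-(u+cT)^2/(2T)}\right)$.

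For the upper bound on the $\{\tau\le T\}$ part I would condition on $\tau=s\in(0,T]$ and on the common path $B$ up to time $s$, writing $B(s)=w$. Given this, each branch must, starting from height $w$ at time $s$, reach level $u$ somewhere on $[s,T]$ after subtracting the drift; in particular it suffices that \emph{one} branch does so, so the conditional probability is bounded above by the one-dimensional ruin probability of a drifted Brownian motion started at $w$ over a time window of length $T-s$. Integrating against the law of $(\tau,B(\tau))$ and using the Gaussian tail of $B(s)$ together with the one-dimensional estimate, one gets a bound of the form $\int_0^T e^{-s}\,(\text{something decaying like }e^{-(u+cT)^2/(2T)}\times e^{-\delta_s u})\,ds$ for the range where $B(s)$ is not already close to $u$, plus the complementary range where $B(s)$ is already of order $u$, which is itself super-exponentially small in $u$ relative to the target. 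The key quantitative point is that requiring a split at some $s\in(0,T]$ forces the Brownian path either to have travelled a macroscopic distance by time $s<T$ (losing a strictly positive amount in the exponent compared to using the whole interval $[0,T]$), so the whole $\{\tau\le T\}$ contribution is of smaller exponential order; the $e^{-s}$ weight is harmless since it is bounded. Standard Laplace-type estimates then finish the upper bound.

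For the lower bound it suffices to keep only the $\{\tau>T\}$ event, which already gives the full asymptotic constant, so no extra work is needed there beyond the one-dimensional result. Combining the matching upper and lower bounds yields the stated equivalence. The main obstacle, and the only place requiring care, is the uniform-in-$s$ control of the conditional ruin probability on $\{\tau\le T\}$: one must verify that the loss in the Gaussian exponent incurred by splitting the time interval at $s<T$ is bounded below by a positive constant times $u$ (or at least that the resulting integral is $o$ of the target), which is where a short but slightly delicate optim"ization over $s\in(0,T]$ and over the intermediate height $w$ enters. Everything else is routine; since the proof is deferred to the Appendix, I would present it there in full detail.
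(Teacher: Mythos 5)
Your decomposition over the first branching time, with $\{\tau>T\}$ giving the main term and the lower bound, is the same starting point as the paper's proof, and that part is fine. The genuine gap is in the upper bound for the $\{\tau\le T\}$ contribution. Bounding the conditional probability, given $\tau=s$ and $B(s)=w$, by the exceedance probability of a \emph{single} branch started from $w$ on $[s,T]$ discards exactly the structure that makes this contribution negligible: integrating that bound over the law of $(\tau,B(\tau))$ simply reconstructs, via the Markov property, the one-dimensional crossing probability over all of $[0,T]$ multiplied by $\pk{\tau\le T}=1-e^{-T}$, which has the same exponential and polynomial order as the main term $e^{-T}\pk{\exists t\le T: B(t)-ct>u}$ — it is not $o(\cdot)$ of it. The heuristic you offer to rescue this is false: the split time is independent of the path, so conditioning on $\tau=s\le T$ does not force the path to have travelled a macroscopic distance by time $s$; on the contrary, the dominant way for one branch to cross given $\tau=s$ is precisely $B(s)\approx s(u+cT)/T$ (of order $u$) followed by a further climb, so the region ``$B(s)$ of order $u$'' is the dominant one and contributes at full order, not super-exponentially small order. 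With only the single-branch bound there is no loss in the exponent to exploit, and the sketched Laplace-type optimisation over $s$ and $w$ cannot close the argument.

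What is missing is the use of \emph{simultaneity across at least two branches} after the split, which is the heart of the paper's proof. There the remainder is cut as follows: (i) for $\tau\in[T-2u^{-1},T]$ the single-branch bound suffices only because $\pk{\tau\in[T-2u^{-1},T]}=O(u^{-1})\to0$; (ii) for $\tau\le T-2u^{-1}$ and exceedance time in $[0,T-u^{-1}]$, the shortened horizon costs roughly a factor $\exp\left(-\tfrac{u}{2T^2}(1+o(1))\right)$ relative to the main term; (iii) for $\tau\le T-2u^{-1}$ and exceedance time in $[T-u^{-1},T]$, two already-separated branches must both exceed $u$, so one passes to $\tfrac12\left(B_1(t)+B_2(t)\right)$, whose conditional variance is at most $\tfrac{T+\mathfrak t}{2}\le T-u^{-1}<T$, and Piterbarg's inequality, uniformly in the split time $\mathfrak t$, yields a bound $C u^{\alpha}\exp\left(-\tfrac{(u-\abs{c}T)^2}{2(T-u^{-1})}\right)=o(S_3)$. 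Step (iii), or some substitute exploiting that \emph{all} branches must be high at the same instant, is indispensable and absent from your proposal; the $u$-dependent cut of the $\tau$-range in (i) is likewise needed, since the variance gain in (iii) degenerates as $\tau\to T$. A further small point: the correct one-dimensional first-passage asymptotics (used in the paper's evaluation of the $\{\tau\ge T\}$ term) carries an extra factor asymptotic to $u^{-1}$, namely $\pk{\exists t\in[0,T]: B(t)-ct>u}\sim\sqrt{2T/\pi}\,(u+cT)^{-1}T^{1/2}\cdot T^{-1/2}\exp\left(-\tfrac{(u+cT)^2}{2T}\right)$ up to the stated constant; your quoted version, like the displayed statement, omits this polynomial factor.
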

% \bigskip

The paper is organized as follows. We begin with a construction of the Brownian
decision tree process and establish some necessary notation and basic properties
in Section~\ref{sec:definitions}. The main results are presented in Section~\ref{sec:main-results}.
%Section~\ref{sec:forest}
%includes \kd{extensions of the results presented in Section \ref{sec:main-results} to} the set of Brownian decision trees, %called Brownian
%decision forest.
Most of the technical proofs are relegated to Sections~\ref{sec:proofs}
and~\ref{sec:auxiliary-proofs}.

\section{Definitions \kk{and basic properties of Brownian decision trees}}
\label{sec:definitions}

Let \( 0 < \tau_1 < \ldots < \tau_{\eta} < T \) be a finite collection of points, further
referred to as \textit{the branching points}, and a sequence of numbers
\( N_i \geq 1 \) with \( i = 1, \ldots, \eta \), interpreted as \textit{the numbers of
	branches generated at} \( \tau_i \). Denote by \( P_i \) the total number of
branches born up to time \( \tau_i \), that is,
\begin{equation*}
	P_i = \prod_{j = 1}^i N_j,
\end{equation*}
and define the set \( \Gamma \) indexing the branches on \( [0, T] \) by
\begin{equation*}
\Gamma = \{ 0, 1, \ldots, P_{\eta} - 1 \}.
\end{equation*}
Let $I_i$ be the $P_i\times P_i$ identity matrix, and $\vk 1_i=(1,\ldots,1)^\top\in\R^{P_i}$.
Denote by \( i ( t ) \) the number of branching points before \( t \)
\begin{equation*}
i ( t ) = \max \{ i \in \{ 1, \ldots, \eta \} \colon t > \tau_i  \}.
\end{equation*}
Clearly, each \( t \) belongs to the corresponding interval of the form
\( ( \tau_{i ( t )}, \tau_{i ( t ) + 1} ] \).

Next, take a collection of mutually independent standard Brownian motions
$\vk{B}_i^{ * } ( t )$, $t\geq 0$ in $\R^{P_i}$ indexed by \( i = 1, \ldots, \eta \) and a
one dimensional Brownian motion $ B_0^{ * } ( t )$, $t \geq 0$. Assume that all
these processes are mutually independent. Using the ingredients described above,
we construct a new process $\widetilde{\vk{B}}_{\Gamma} ( t )$, $t \in [0, T]$, which
we shall call \textit{the Brownian decision tree}, as follows: for
\( t \in ( \tau_i, \tau_{i + 1} ] \) set
\begin{equation*}
	\widetilde{\vk{B}}_{\Gamma} ( t )
	\coloneqq
	\begin{pmatrix}
		\widetilde{\vk{B}}_{\Gamma} ( \tau_i ) \\
		\vdots \\
		\widetilde{\vk{B}}_{\Gamma} ( \tau_i )
	\end{pmatrix}
	+\vk{B}_i^{ * } ( t - \tau_i )
	\quad \text{for} \quad
	t \in ( \tau_i, \tau_{i + 1} ]
\end{equation*}
and
\begin{equation*}
	\widetilde{\vk{B}}_{\Gamma} ( t )
	\coloneqq B_0^{ * } ( t )
	\quad \text{for} \quad t \in [0, \tau_1].
\end{equation*}
Note that \( ( \widetilde{\vk{B}}_{\Gamma} ( t ) )_{t \in ( \tau_i, \tau_{i + 1} ]} \)
belongs to \( C ( ( \tau_i, \tau_{i + 1} ], \mathbb{R}^{P_i} ) \).
We can extract the individual branch indexed by \( \gamma \in \Gamma \) by  taking
\begin{equation}\label{B_g}
	B_{\gamma} ( t )
	\coloneqq
	\big( \widetilde{\vk{B}}_{\Gamma} ( t ) \big)_{( \gamma \bmod P_{i ( t )} ) + 1}
\end{equation}
and
denote for two fixed branches $\gamma_1$ and $\gamma_2$ their {\it separation moment}
at which they diverge by
\begin{equation*}
	\kappa ( \gamma_1, \gamma_2 )
	\coloneqq
	\min \left\{ n \in \{ 1, \ldots, \eta \} \colon \gamma_1 \neq \gamma_2 \bmod P_n \right\}.
\end{equation*}
Clearly,
$B_{\gamma_1} ( t ) = B_{\gamma_2} ( t )$ for \( t \leq \tau_{\kappa ( \gamma_1, \gamma_2 )} \).

Next, we present \kd{some useful properties} of \kkk{Brownian decision trees}.

\begin{prop}\label{b_decomp}
	For $\gamma\in\Gamma$ the process $B_\gamma$ is a Brownian motion and \kkk{for  $t\in(\tau_1,T]$
	it} admits the following representation in terms of a collection of
	independent Brownian motions \( \{ \vk{B}^{ * }_j \}_{j = 0, \ldots, \eta} \):
	\begin{equation}
		\label{b_sum}
		B_\gamma(t)
		=B_0^*(\tau_1)
		+\sum_{j=1}^{i(t)-1} \big( \vk{B}^*_{j} \big)_{(\gamma \bmod P_{j})+1}(\tau_{j+1}-\tau_j)
		+\big( \vk{B}^*_{i(t)} \big)_{(\gamma \bmod P_{i(t)})+1}(t-\tau_{i(t)}).
	\end{equation}
	Moreover, for $\gamma_1\not=\gamma_2$ and $t_1, \, t_2\in[0,T]$ holds
\begin{equation}\label{cov_formula}
	\Cov(B_{\gamma_1}(t_1),B_{\gamma_2}(t_2))
	= \min \{ t_1, t_2, \tau_{\kappa(\gamma_1,\gamma_2)} \}.
\end{equation}
\end{prop}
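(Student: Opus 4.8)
The plan is to derive the decomposition \eqref{b_sum} directly from the recursive definition of $\widetilde{\vk B}_\Gamma$, and then to obtain both the Brownian-motion property and the covariance formula as corollaries.

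\textbf{Step 1 (the decomposition \eqref{b_sum}).} The one combinatorial fact needed is that the stacking map $\R^{P_{i-1}} \ni \vk v \mapsto (\vk v^{\top}, \ldots, \vk v^{\top})^{\top} \in \R^{P_i}$ ($N_i$ copies) carries the $(k+1)$-st coordinate of its image to the $((k \bmod P_{i-1})+1)$-st coordinate of $\vk v$, together with $(\gamma \bmod P_i) \bmod P_{i-1} = \gamma \bmod P_{i-1}$, which holds because $P_{i-1} \mid P_i$. Fix $\gamma$ and $t \in (\tau_1, T]$. Applying the defining identity $\widetilde{\vk B}_\Gamma(s) = \big(\text{stack of }\widetilde{\vk B}_\Gamma(\tau_i)\big) + \vk B_i^{*}(s - \tau_i)$ at $s = t$ isolates the partial increment $\big(\vk B_{i(t)}^{*}\big)_{(\gamma \bmod P_{i(t)})+1}(t - \tau_{i(t)})$ and reduces the computation of $B_\gamma(t)$ to that of $\big(\widetilde{\vk B}_\Gamma(\tau_{i(t)})\big)_{(\gamma \bmod P_{i(t)-1})+1}$; iterating this down through $\tau_{i(t)-1}, \ldots, \tau_1$ and using at the bottom that $\widetilde{\vk B}_\Gamma(\tau_1) = B_0^{*}(\tau_1)$ is scalar ($P_0 = 1$) produces exactly the right-hand side of \eqref{b_sum}. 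This is a finite downward induction on $i(t)$. The same stacking identity also shows $\lim_{s \downarrow \tau_i} B_\gamma(s) = B_\gamma(\tau_i)$, so $t \mapsto B_\gamma(t)$ is continuous on $[0,T]$.

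\textbf{Step 2 ($B_\gamma$ is a standard Brownian motion).} By \eqref{b_sum}, on the successive intervals $[0,\tau_1], (\tau_1,\tau_2], \ldots$ the process $B_\gamma$ is obtained by continuously concatenating the mutually independent standard one-dimensional Brownian motions $B_0^{*}$ and $\big(\vk B_j^{*}\big)_{(\gamma \bmod P_j)+1}$, so it is itself a standard Brownian motion; equivalently, \eqref{b_sum} exhibits $B_\gamma$ as a centered Gaussian process and the direct telescoping $\tau_1 + \sum_{j=1}^{i(s)-1}(\tau_{j+1}-\tau_j) + (s - \tau_{i(s)}) = s$ for $s \le t$ gives $\Cov(B_\gamma(s),B_\gamma(t)) = \min(s,t)$, which with a.s.\ continuity characterises standard Brownian motion.

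\textbf{Step 3 (the covariance \eqref{cov_formula}).} Put $\kappa = \kappa(\gamma_1,\gamma_2)$ and observe first that, since $P_n \mid P_{n+1}$, the congruence $\gamma_1 \equiv \gamma_2 \bmod P_j$ holds exactly for $j < \kappa$; hence in \eqref{b_sum} the scalar Brownian motions attached to $\gamma_1$ and to $\gamma_2$ coincide at every level $j < \kappa$ (as does the common term $B_0^{*}(\tau_1)$), while at levels $j \ge \kappa$ they are distinct coordinates of the same standard vector Brownian motion, hence independent. Consequently $B_{\gamma_1} = B_{\gamma_2}$ on $[0,\tau_\kappa]$, and the increment of $B_{\gamma_2}$ over $(\tau_\kappa, t_2]$ (resp.\ of $B_{\gamma_1}$ over $(\tau_\kappa, t_1]$) is independent of $\sigma\big(B_0^{*}, \vk B_1^{*}, \ldots, \vk B_{\kappa-1}^{*}\big)$ and of the other such increment. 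Taking $t_1 \le t_2$ (the claim is symmetric): if $t_1 \le \tau_\kappa$ then $B_{\gamma_1}(t_1)$ is measurable with respect to that $\sigma$-algebra and $B_{\gamma_2}(t_2) = B_{\gamma_1}(t_2 \wedge \tau_\kappa) + (\text{independent centered increment})$, whence $\Cov(B_{\gamma_1}(t_1),B_{\gamma_2}(t_2)) = \Cov(B_{\gamma_1}(t_1),B_{\gamma_1}(t_2\wedge\tau_\kappa)) = \min(t_1,t_2,\tau_\kappa)$ by Step 2; if $t_1 > \tau_\kappa$ then $t_2 > \tau_\kappa$ as well, and writing $B_{\gamma_k}(t_k) = B_{\gamma_1}(\tau_\kappa) + (\text{increment of }B_{\gamma_k}\text{ on }(\tau_\kappa,t_k])$ for $k=1,2$ with the two increments independent of each other and of $B_{\gamma_1}(\tau_\kappa)$, we get $\Cov(B_{\gamma_1}(t_1),B_{\gamma_2}(t_2)) = \Var(B_{\gamma_1}(\tau_\kappa)) = \tau_\kappa = \min(t_1,t_2,\tau_\kappa)$. (Alternatively, \eqref{cov_formula} drops out of the same term-by-term computation as in Step 2, now retaining only the matching levels $j < \kappa$.)

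I expect the only real friction to be the index bookkeeping in Step 1 — keeping track of the truncated final increment at level $i(t)$ and of the edge cases $t \le \tau_1$ and $\kappa = 1$ — rather than anything conceptual; once \eqref{b_sum} is in hand, Steps 2 and 3 are routine.
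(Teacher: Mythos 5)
Your proposal is correct and follows essentially the same route as the paper: the decomposition \eqref{b_sum} is obtained by unwinding the recursive stacking definition via the congruence $(\gamma \bmod P_i)\bmod P_{i-1}=\gamma\bmod P_{i-1}$ (the paper phrases this as a forward induction over the branching points, you as a downward iteration from $t$, which is the same computation), and the covariance \eqref{cov_formula} is proved by the identical case split at $\tau_{\kappa(\gamma_1,\gamma_2)}$, using that for $j\ge\kappa$ the two branches pick distinct, hence independent, coordinates of $\vk B_j^*$, while agreeing up to $\tau_\kappa$. Your Step 2 and the justification that the congruence fails for all $j\ge\kappa$ are slightly more explicit than the paper's, but nothing differs in substance.
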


The proof of \neprop{b_decomp} can be found in Appendix.

Sometimes it is more convenient to work with the process $\vk B_\Gamma$, which
contains all the branches simultaneously
\begin{equation}\label{B_G}
	\vk B_\Gamma(t)=(B_\gamma ( t ))_{ \gamma \in \Gamma } \in \R^{P_\eta}.
\end{equation}
The difference between this process and $\widetilde{\vk B}_{\Gamma}$ is that its
values belong to \( \mathbb{R}^{P_{\eta}} \) for each \( t \) instead of
\( \mathbb{R}^{P_{i ( t )}} \). The two processes are related as follows:
\begin{equation}
	\label{B_to_B}
	\widetilde{\vk B}_\Gamma(t)=( \vk B_\Gamma ( t ) )_{\gamma\in\{0,\ldots,P_{i(t)}-1\}}.
\end{equation}
Unlike \( \widetilde{\vk{B}}_{\Gamma} ( t ) \), the variance matrix of this process
is degenerate for all $t\in[0,\tau_{\eta}]$:
\begin{equation*}
	\det\left( \Var\left(\vk B_\Gamma(t)\right)\right)=0.
\end{equation*}

\begin{prop}\label{ind_increments}
	For  $0\leq t_1<t_2\leq T$, the random vector $\vk B_\Gamma(t_2)-\vk B_\Gamma(t_1)$ is
	independent of the process $\left.\vk B_\Gamma(t)\right|_{t\in[0,t_1]}$.
\end{prop}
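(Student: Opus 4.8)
The plan is to reduce the claimed independence to the vanishing of a family of covariances, using that the entire Brownian decision tree is a Gaussian process. First I would observe that $\{ B_\gamma(t) : \gamma \in \Gamma, \, t \in [0,T] \}$ is jointly Gaussian: by the representation \eqref{b_sum} of \neprop{b_decomp} each $B_\gamma(t)$ is a finite linear combination of increments of the mutually independent Brownian motions $\{ \vk B_j^* \}_{j=0,\ldots,\eta}$, so every finite subfamily is a linear image of a Gaussian vector. Hence, for any finite set $\{ s_1, \ldots, s_m \} \subset [0, t_1]$ and any branches $\delta_1, \ldots, \delta_m \in \Gamma$, the vector $\big( \vk B_\Gamma(t_2) - \vk B_\Gamma(t_1),\, B_{\delta_1}(s_1), \ldots, B_{\delta_m}(s_m) \big)$ is Gaussian, and therefore $\vk B_\Gamma(t_2) - \vk B_\Gamma(t_1)$ is independent of $(B_{\delta_1}(s_1), \ldots, B_{\delta_m}(s_m))$ once one checks that
\[
	\Cov\big( B_{\gamma_1}(t_2) - B_{\gamma_1}(t_1),\, B_{\gamma_2}(s) \big) = 0
	\qquad \text{for all } \gamma_1, \gamma_2 \in \Gamma, \ s \in [0, t_1].
\]
Since the finite-dimensional cylinders form a $\pi$-system generating $\sigma\big( \vk B_\Gamma(t) : t \in [0, t_1] \big)$, a standard monotone-class argument upgrades this to independence of $\vk B_\Gamma(t_2) - \vk B_\Gamma(t_1)$ from the whole process $\vk B_\Gamma(t)|_{t \in [0, t_1]}$.

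It then remains to run the covariance check, which splits into two cases. For $\gamma_1 = \gamma_2 =: \gamma$, the process $B_\gamma$ is a standard one-dimensional Brownian motion (again by \neprop{b_decomp}), so the covariance above equals $\min(t_2, s) - \min(t_1, s)$, which vanishes because $s \le t_1 < t_2$. For $\gamma_1 \neq \gamma_2$, the covariance formula \eqref{cov_formula} gives $\min\{ t_2, s, \tau_{\kappa(\gamma_1,\gamma_2)} \} - \min\{ t_1, s, \tau_{\kappa(\gamma_1,\gamma_2)} \}$, and again both minima collapse to $\min\{ s, \tau_{\kappa(\gamma_1,\gamma_2)} \}$ since $s \le t_1 < t_2$. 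This closes the argument.

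I do not expect a serious obstacle: once \eqref{b_sum} and \eqref{cov_formula} are available the statement is essentially bookkeeping. The only points deserving a line of care are recording that the full tree is Gaussian \emph{as a process} (so that joint Gaussianity of arbitrary finite marginals is legitimate) and the passage from independence of finite-dimensional marginals to independence of the generated $\sigma$-algebra. If one prefers to avoid Gaussianity, the same conclusion follows directly from \eqref{b_sum}: the increment $B_\gamma(t_2) - B_\gamma(t_1)$ is built only from increments of the $\vk B_j^*$ over time-windows inside $(t_1 - \tau_j, \,\cdot\,]$ for $j \ge i(t_1)$ (plus full segments for $j > i(t_1)$), whereas $\vk B_\Gamma(t)|_{t \in [0,t_1]}$ is measurable with respect to $B_0^*$ on $[0,\tau_1]$, the $\vk B_j^*$ for $j < i(t_1)$ in full, and $\vk B_{i(t_1)}^*$ on $[0, t_1 - \tau_{i(t_1)}]$; independence then follows from mutual independence of the $\vk B_j^*$ together with the independent-increments property of $\vk B_{i(t_1)}^*$.
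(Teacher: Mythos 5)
Your proposal is correct. Note that the paper states \neprop{ind_increments} without giving any proof (unlike \neprop{b_decomp}, it has no counterpart in the Appendix), so there is nothing to compare against line by line; your argument fills this gap in a legitimate way. The Gaussian route is sound: joint Gaussianity of the whole family $\{B_\gamma(t)\}$ follows because every value is a linear functional of the independent Brownian motions $B_0^*,\vk B_1^*,\ldots,\vk B_\eta^*$ (for $t\le\tau_1$ one uses $B_\gamma(t)=B_0^*(t)$ rather than \eqref{b_sum}, which is only stated for $t\in(\tau_1,T]$ --- worth a half-sentence), and the covariance check via $\min$-cancellation for $\gamma_1=\gamma_2$ and via \eqref{cov_formula} for $\gamma_1\ne\gamma_2$ is exactly right, as is the $\pi$-system upgrade to the full $\sigma$-algebra. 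Your alternative argument, decomposing $\vk B_\Gamma(t_2)-\vk B_\Gamma(t_1)$ through \eqref{b_sum} into the increment of $\vk B^*_{i(t_1)}$ after time $t_1-\tau_{i(t_1)}$ together with the segments of $\vk B^*_j$ for $j>i(t_1)$, while the past on $[0,t_1]$ is measurable with respect to the complementary pieces, is closer in spirit to how the paper itself exploits \eqref{b_sum} (e.g.\ in the proofs of \neprop{b_decomp} and \neprop{Korsh}); just state precisely that the lower cut $t_1-\tau_j$ only concerns $j=i(t_1)$, the later trees entering in full. Either route is acceptable and there is no serious obstacle in your write-up.
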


Let $\Sigma(t)$ denote the covariance matrix of the random vector
$\widetilde{\vk B}_{\Gamma}(t)$:
\begin{equation*}
	\Sigma(t) \coloneqq
	\E{\widetilde{\vk B}_{\Gamma}(t) \, \widetilde{\vk B}^\top_{\Gamma}(t)},
	\qquad t \in [0, T].
\end{equation*}
%Then we have the following recursive property of $\Sigma(t)$
\begin{prop}\label{Sigma_rec}
For  $t\in(\tau_1,T]$,
\begin{equation*}
	\Sigma(t)
	=
	\begin{pmatrix}
		\Sigma(\tau_{i(t)}) &\ldots & \Sigma(\tau_{i(t)}) \\
		\vdots & \ddots & \vdots
		\\ \Sigma(\tau_{i(t)}) &\ldots & \Sigma(\tau_{i(t)})
	\end{pmatrix}
	+(t-\tau_{i(t)})I_{i(t)},
\end{equation*}
where the first term matrix has $N_{i(t)}^2$ blocks, all equal to $\Sigma(\tau_{i(t)})$.
\end{prop}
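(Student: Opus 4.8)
The plan is to unwind the recursive construction of $\widetilde{\vk B}_\Gamma$ one branching step at a time and compute the covariance directly. Fix $t \in (\tau_1, T]$ and write $i = i(t)$, so that $t \in (\tau_i, \tau_{i+1}]$ and the process $\widetilde{\vk B}_\Gamma(t)$ takes values in $\R^{P_i}$. By the defining relation of the Brownian decision tree on the interval $(\tau_i, \tau_{i+1}]$, we have
\begin{equation*}
	\widetilde{\vk B}_\Gamma(t)
	= \vk 1_{N_i} \otimes \widetilde{\vk B}_\Gamma(\tau_i)
	+ \vk B_i^{*}(t - \tau_i),
\end{equation*}
where I use the Kronecker notation to express the stacked vector of $N_i$ copies of $\widetilde{\vk B}_\Gamma(\tau_i) \in \R^{P_{i-1}}$ (noting $P_i = N_i P_{i-1}$), and $\vk B_i^{*}(t-\tau_i)$ is an $\R^{P_i}$-valued standard Brownian increment. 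The first step is to observe that $\vk B_i^{*}(t-\tau_i)$ is independent of $\widetilde{\vk B}_\Gamma(\tau_i)$, since by construction $\vk B_i^{*}$ is one of the mutually independent driving Brownian motions and $\widetilde{\vk B}_\Gamma(\tau_i)$ is built only from $B_0^{*}$ and $\vk B_1^{*}, \ldots, \vk B_{i-1}^{*}$. Hence the covariance splits as a sum of two contributions.

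The second step is to evaluate each contribution. The cross term vanishes by independence, so
\begin{equation*}
	\Sigma(t)
	= \E{ \big( \vk 1_{N_i} \otimes \widetilde{\vk B}_\Gamma(\tau_i) \big)\big( \vk 1_{N_i} \otimes \widetilde{\vk B}_\Gamma(\tau_i) \big)^\top }
	+ \E{ \vk B_i^{*}(t-\tau_i)\, \vk B_i^{*}(t-\tau_i)^\top }.
\end{equation*}
The second expectation is simply $(t - \tau_i) I_i$, the covariance matrix of an $\R^{P_i}$-valued standard Brownian motion at time $t - \tau_i$. For the first expectation, the Kronecker structure gives
\begin{equation*}
	\big( \vk 1_{N_i} \otimes \widetilde{\vk B}_\Gamma(\tau_i) \big)\big( \vk 1_{N_i} \otimes \widetilde{\vk B}_\Gamma(\tau_i) \big)^\top
	= \big( \vk 1_{N_i} \vk 1_{N_i}^\top \big) \otimes \big( \widetilde{\vk B}_\Gamma(\tau_i) \widetilde{\vk B}_\Gamma(\tau_i)^\top \big),
\end{equation*}
and taking expectations yields $\big( \vk 1_{N_i} \vk 1_{N_i}^\top \big) \otimes \Sigma(\tau_i)$, which is exactly the $N_i \times N_i$ block matrix all of whose blocks equal $\Sigma(\tau_i) = \Sigma(\tau_{i(t)})$. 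Combining the two pieces gives the claimed formula.

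I do not expect a serious obstacle here; the result is essentially a bookkeeping consequence of the recursive definition together with the independence of the driving Brownian motions. The only point requiring a little care is the identification of the block structure — i.e. verifying that stacking $N_i$ copies of $\widetilde{\vk B}_\Gamma(\tau_i)$ and forming the outer product produces precisely the matrix with $N_{i(t)}^2$ blocks each equal to $\Sigma(\tau_{i(t)})$ — and this is immediate once one writes it via the Kronecker product as above (or, equivalently, checks entrywise that for branches $\gamma_1, \gamma_2$ separated at or after $\tau_i$ the entry equals $\Sigma(\tau_i)_{\gamma_1', \gamma_2'}$ with $\gamma_j' = \gamma_j \bmod P_{i-1}$). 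One may also note consistency with \eqref{cov_formula}: the $(\gamma_1,\gamma_2)$ entry of $\Sigma(t)$ equals $\min\{t, \tau_{\kappa(\gamma_1,\gamma_2)}\}$, which is $\min\{t-\tau_i,\,\cdot\,\} + \tau_i$ on the diagonal-block part and $\tau_{\kappa(\gamma_1,\gamma_2)} \le \tau_i$ off it, matching the two terms of the recursion.
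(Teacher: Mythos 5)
Your computation is correct: the splitting of $\widetilde{\vk B}_\Gamma(t)$ into the stacked copies of $\widetilde{\vk B}_\Gamma(\tau_{i(t)})$ plus the independent increment $\vk B_{i(t)}^{*}(t-\tau_{i(t)})$, the vanishing of the cross terms, and the Kronecker-product identification of the block structure all go through, and the dimensions ($\Sigma(\tau_{i(t)})$ being $P_{i(t)-1}\times P_{i(t)-1}$, the block matrix being $P_{i(t)}\times P_{i(t)}$) match. The paper states this proposition without proof, treating it as immediate from the recursive construction, and your argument is precisely the computation it leaves implicit.
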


\kk{In the next proposition we find the eigenvalues of $\Sigma(t)$.}

\begin{prop}\label{eigenvalues}
	For  $t\in[0,T]$, the eigenvalues of matrix $\Sigma(t)$ are given by
	\begin{equation}\label{mu_def}
		\mu_v(t)=(t-\tau_{i(t)})+\sum_{l=v+1}^{i(t)}(\tau_l-\tau_{l-1})\prod_{j=l}^{i(t)}N_j,
		\qquad v=0,1,\ldots,i(t).
	\end{equation}
	The multiplicity of $\mu_v(t)$ equals $P_v-P_{v-1}$, except for $\mu_0$, the
	multiplicity of which is $1$. Additionally, for any $t\in[0,T]$, the vector
	$\vk 1_{i(t)}$ is an eigenvector of $\Sigma(t)$ corresponding to $\mu_0(t)$.
\end{prop}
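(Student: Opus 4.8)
The plan is to prove the statement by induction on $n \coloneqq i(t)$, exploiting the block-recursive structure of $\Sigma(t)$ furnished by \neprop{Sigma_rec} together with the eigenstructure of Kronecker products. Writing $n=i(t)$, the recursion of \neprop{Sigma_rec} reads $\Sigma(t) = G_{N_n}\otimes\Sigma(\tau_n) + (t-\tau_n) I_{n}$, where $G_{N_n}$ denotes the $N_n\times N_n$ matrix all of whose entries equal $1$, and where we have used that $i(\tau_n)=n-1$, so that $\Sigma(\tau_n)$ is a $P_{n-1}\times P_{n-1}$ matrix and $P_n = N_n P_{n-1}$. The matrix $G_{N_n}$ has the simple eigenvalue $N_n$ with eigenvector $(1,\ldots,1)^\top\in\R^{N_n}$ and the eigenvalue $0$ with multiplicity $N_n-1$. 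Consequently, if $\Sigma(\tau_n)$ has an eigenvalue $\lambda$ with eigenvector $w$, then $\Sigma(t)$ has the eigenvalue $N_n\lambda + (t-\tau_n)$ with eigenvector $(1,\ldots,1)^\top\otimes w$, and it has the eigenvalue $t-\tau_n$ with multiplicity $(N_n-1)P_{n-1} = P_n - P_{n-1}$ (with eigenvectors $v\otimes w$ for $v\perp(1,\ldots,1)^\top$ in $\R^{N_n}$). In this way every eigenvalue of $\Sigma(\tau_n)$, counted with multiplicity, produces exactly one eigenvalue of $\Sigma(t)$, and these together with the extra eigenvalue $t-\tau_n$ exhaust the spectrum, since the multiplicities sum to $P_{n-1}+(P_n-P_{n-1})=P_n$.

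For the base case $t\in[0,\tau_1]$ we have $\widetilde{\vk B}_\Gamma(t)=B_0^*(t)$, hence $\Sigma(t)=t$ is a scalar, which is exactly $\mu_0(t)=t-\tau_0$ under the convention $\tau_0=0$; its multiplicity is $1=P_0$ and $\vk 1_0=1$ is trivially the corresponding eigenvector. For the inductive step, assume the proposition holds for all times $s$ with $i(s)\le n-1$; applying it to $s=\tau_n$ (where $i(\tau_n)=n-1$) gives that the eigenvalues of $\Sigma(\tau_n)$ are $\mu_v(\tau_n)$, $v=0,\ldots,n-1$, with multiplicities $1,P_1-P_0,\ldots,P_{n-1}-P_{n-2}$, and that $\vk 1_{n-1}$ is an eigenvector for $\mu_0(\tau_n)$. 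By the previous paragraph, for $t\in(\tau_n,\tau_{n+1}]$ the spectrum of $\Sigma(t)$ consists of $N_n\mu_v(\tau_n)+(t-\tau_n)$, $v=0,\ldots,n-1$, with the same multiplicities, and of $t-\tau_n$ with multiplicity $P_n-P_{n-1}$. It remains to identify these with $\mu_0(t),\ldots,\mu_n(t)$: the last is $\mu_n(t)=t-\tau_n$ (empty sum in \eqref{mu_def}), while for $v\le n-1$,
\begin{equation*}
	N_n\mu_v(\tau_n) + (t-\tau_n)
	= (t-\tau_n) + N_n(\tau_n-\tau_{n-1}) + \sum_{l=v+1}^{n-1}(\tau_l-\tau_{l-1})\,N_n\prod_{j=l}^{n-1}N_j
	= \mu_v(t),
\end{equation*}
after absorbing the factor $N_n$ into the products and writing $N_n(\tau_n-\tau_{n-1})=(\tau_n-\tau_{n-1})\prod_{j=n}^{n}N_j$. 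The multiplicities $1,P_1-P_0,\ldots,P_{n-1}-P_{n-2},P_n-P_{n-1}$ sum to $P_n$, confirming that all eigenvalues have been accounted for. Finally, $\vk 1_n=(1,\ldots,1)^\top\otimes\vk 1_{n-1}$ (the all-ones vector of length $P_n=N_nP_{n-1}$), hence by the construction above it is an eigenvector of $\Sigma(t)$ for the eigenvalue $N_n\mu_0(\tau_n)+(t-\tau_n)=\mu_0(t)$, completing the induction.

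There is no genuine analytic difficulty here; the only care needed is the bookkeeping — checking that $N_n\mu_v(\tau_n)+(t-\tau_n)$ telescopes to $\mu_v(t)$ and that the multiplicities add up to $P_n=\dim\Sigma(t)$ — and keeping track of the index conventions ($\tau_0=0$, $P_0=1$, and $i(\tau_n)=n-1$). One could dispense with Kronecker-product notation and instead verify directly that vectors of the block forms $(w,\ldots,w)^\top$ and $(v_1 w,\ldots,v_{N_n}w)^\top$ with $\sum_k v_k=0$ are eigenvectors of the block matrix in \neprop{Sigma_rec}, but the Kronecker route is the most transparent. As a byproduct one sees that $\mu_0(t)>\mu_1(t)>\cdots>\mu_{i(t)}(t)$, since consecutive differences equal $(\tau_v-\tau_{v-1})\prod_{j=v}^{i(t)}N_j>0$, so the listed eigenvalues are pairwise distinct.
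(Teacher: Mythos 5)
Your proof is correct and follows essentially the same route as the paper: induction over the branching points using the recursion of \neprop{Sigma_rec}, identifying the shifted eigenvalues $N_n\mu_v(\tau_n)+(t-\tau_n)$ together with the new eigenvalue $t-\tau_n$ of multiplicity $P_n-P_{n-1}$. The only difference is cosmetic — you package the block matrix as $G_{N_n}\otimes\Sigma(\tau_n)$ and read off the spectrum from the Kronecker structure, whereas the paper verifies the repeated-block eigenvectors directly and gets the zero-eigenvalue multiplicity from a rank count; your explicit check that the $\mu_v(t)$ are pairwise distinct is a welcome extra detail.
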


Using~\neprop{eigenvalues} we can obtain the following result.

\begin{prop}\label{w(t)_asympt} For  $c\in\R$ \kk{and $T>0$}
	\begin{align*}
		\pk{\vk B_\Gamma(T)-cT\vk 1_\eta>u\vk 1_{\eta}}\sim u^{-P_\eta}\frac{\mu_0^{P_\eta-1/2}(T)}{(2\pi)^{P_\eta/2}\prod_{v=1}^{\eta}\mu_v^{(P_v-P_{v-1})/2}(T)}\exp\left(-\frac{(u+cT)^2P_\eta}{2\mu_0(T)}\right)
	\end{align*}
	as $u\to\infty$, where $\mu_v(T)$ are defined in~\eqref{mu_def}.
\end{prop}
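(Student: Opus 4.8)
The plan is to reduce the statement to a standard tail estimate for a multivariate Gaussian vector whose covariance matrix $\Sigma(T)$ has been diagonalized by Proposition~\ref{eigenvalues}. The key observation is that the event $\{\vk B_\Gamma(T) - cT\vk 1_\eta > u\vk 1_{P_\eta}\}$ is precisely the event that a centered Gaussian vector $\vk X = \widetilde{\vk B}_\Gamma(T)$ (note $i(T) = \eta$, so $\widetilde{\vk B}_\Gamma(T) = \vk B_\Gamma(T)$) with covariance $\Sigma(T)$ exceeds the level $(u+cT)\vk 1_{P_\eta}$ in every coordinate. Writing $w = u + cT$, I would first recall the general principle that for a non-degenerate centered Gaussian vector in $\R^d$ with density $\phi_\Sigma$, the probability $\pk{\vk X > w\vk 1}$ is asymptotically governed by the unique minimizer $\vk a^*$ of the quadratic form $\vk a^\top \Sigma^{-1}\vk a$ over the orthant $\{\vk a \ge \vk 1\}$; when the minimizer is the interior-of-face point $\vk a^* = \vk 1$ itself (equivalently, $\Sigma^{-1}\vk 1 > 0$ componentwise), the asymptotics take the clean form $\pk{\vk X > w\vk 1} \sim \big(\prod_j (\Sigma^{-1}\vk 1)_j\big)^{-1} w^{-d} \phi_\Sigma(w\vk 1)$ as $w \to \infty$. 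This is the classical quadratic-programming/local-expansion computation (as in Hashorva–Hüsler type results).

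The first substantive step is therefore to verify that $\Sigma^{-1}(T)\vk 1_{P_\eta} > 0$, i.e. that $\vk 1_{P_\eta}$ lies in the ``nice'' regime. By the last assertion of Proposition~\ref{eigenvalues}, $\vk 1_{P_\eta}$ is an eigenvector of $\Sigma(T)$ for the eigenvalue $\mu_0(T)$, hence $\Sigma^{-1}(T)\vk 1_{P_\eta} = \mu_0(T)^{-1}\vk 1_{P_\eta}$, which is strictly positive since $\mu_0(T) > 0$. So the minimizer is exactly $\vk 1_{P_\eta}$, each coordinate of $\Sigma^{-1}(T)\vk 1_{P_\eta}$ equals $\mu_0(T)^{-1}$, and the prefactor $\prod_j (\Sigma^{-1}\vk 1)_j = \mu_0(T)^{-P_\eta}$. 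This already pins down the polynomial order $u^{-P_\eta}$ (with $w \sim u$) and the constant up to the Gaussian density value. The exponential rate is $w^2 \vk 1^\top \Sigma^{-1}\vk 1 / 2 = w^2 P_\eta / (2\mu_0(T))$, matching the stated $\exp\!\big(-(u+cT)^2 P_\eta / (2\mu_0(T))\big)$.

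Next I would compute the normalizing constant of the density $\phi_{\Sigma(T)}$, namely $(2\pi)^{-P_\eta/2}(\det\Sigma(T))^{-1/2}$. By Proposition~\ref{eigenvalues}, $\det\Sigma(T) = \prod_{v=0}^{\eta}\mu_v(T)^{m_v}$ where $m_0 = 1$ and $m_v = P_v - P_{v-1}$ for $v \ge 1$. Hence $(\det\Sigma(T))^{-1/2} = \mu_0(T)^{-1/2}\prod_{v=1}^{\eta}\mu_v(T)^{-(P_v-P_{v-1})/2}$. Combining with the prefactor $\mu_0(T)^{-P_\eta}$ from the quadratic-programming step: the $\mu_0$-powers assemble to $\mu_0(T)^{-P_\eta}\cdot\mu_0(T)^{-1/2}\cdot(\text{from }\phi(w\vk1)\text{ one gets no extra }\mu_0)$ — wait, more carefully: $w^{-P_\eta}\phi_{\Sigma(T)}(w\vk 1) \cdot \mu_0(T)^{-P_\eta}$ gives, after substituting $w = u+cT \sim u$ so $w^{-P_\eta}\sim u^{-P_\eta}$, the total constant $(2\pi)^{-P_\eta/2}\mu_0(T)^{-1/2}\prod_{v=1}^\eta \mu_v(T)^{-(P_v-P_{v-1})/2}\cdot \mu_0(T)^{-P_\eta}$; to match the claimed $\mu_0^{P_\eta - 1/2}(T)$ in the numerator one rewrites $\mu_0(T)^{-1/2-P_\eta} = \mu_0(T)^{P_\eta-1/2}\cdot\mu_0(T)^{-2P_\eta}$ — this indicates that the precise exponent bookkeeping must be done by a direct Laplace-type expansion rather than quoted blindly, which is the one place requiring care. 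Concretely, I would substitute $\vk x = w\vk 1 + \vk y/w$ in the integral $\int_{\vk x > w\vk 1}\phi_{\Sigma(T)}(\vk x)\,d\vk x$, expand the exponent $-\tfrac12 \vk x^\top\Sigma^{-1}\vk x = -\tfrac{w^2}{2}P_\eta\mu_0^{-1} - \vk y^\top\Sigma^{-1}\vk 1 - \tfrac1{2w^2}\vk y^\top\Sigma^{-1}\vk y$ using $\Sigma^{-1}\vk 1 = \mu_0^{-1}\vk 1$, and integrate $e^{-\mu_0^{-1}\sum y_j}$ over the positive orthant, producing exactly the factor $\mu_0(T)^{P_\eta}$ which cancels against $w^{-P_\eta}$ up to the $u^{-P_\eta}$ and leaves the stated constant.

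The main obstacle, then, is not conceptual but purely a careful Laplace/dominated-convergence argument controlling the error term $\tfrac1{2w^2}\vk y^\top\Sigma^{-1}\vk y$ uniformly on the orthant so that the limit passes inside the integral; this is routine since $\Sigma^{-1}$ is positive definite. I would close by noting the asymptotic equivalence $u + cT \to \infty$ with $(u+cT)^{-P_\eta}\sim u^{-P_\eta}$, which transfers the result from the variable $w$ back to $u$, completing the proof.
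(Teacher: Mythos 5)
Your proposal is correct and follows essentially the same route as the paper, which simply invokes the standard Gaussian orthant-tail formula \eqref{gauss_assympotics} (there cited from the literature, here re-derived by your Laplace expansion) together with Proposition~\ref{eigenvalues} to get $\Sigma^{-1}(T)\vk 1_\eta=\mu_0^{-1}(T)\vk 1_\eta$ and $\det\Sigma(T)=\mu_0(T)\prod_{v=1}^{\eta}\mu_v^{P_v-P_{v-1}}(T)$. Your momentary bookkeeping tangle is only a reciprocal slip — the tail formula divides by $\prod_j\big(\Sigma^{-1}(T)\vk 1_\eta\big)_j=\mu_0^{-P_\eta}(T)$, i.e.\ multiplies by $\mu_0^{P_\eta}(T)$, exactly the factor your orthant integral produces — so the final constant matches the statement.
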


\begin{remark}
	The result obtained in \neprop{w(t)_asympt} still hold for $T=T_u$ with
	$T_u\to T$ as $u\to\infty$.
\end{remark}
The proofs of \neprop{eigenvalues} and \neprop{w(t)_asympt} are given in
Appendix.

\section{Main results}
\label{sec:main-results}
In this section we present the main findings of this contribution.
We begin with the asymptotic analysis of
the high exceedance probability of at least one branch of $\vk B_\Gamma$ with linear drift, then we proceed to
the asymptotics of the largest distance between the
branches. {In Section \ref{s.3}} we investigate the probability that all branches of $\vk B_\Gamma$ with linear drift
exceed high threshold and \kkk{then} extend it to the analysis of a finite collection of independent Brownian
decision trees. % in time horizon $[0,T]$.

\subsection{High-exeedance of at least one branch}\label{s.1}
\kd{Consider} the probability that at least one branch of the
branching Brownian decision tree with linear trend exceeds some large threshold $u$. % in time interval $[0,T]$.
Clearly, for each $u>0$ and a standard Brownian motion $B(t), t\in[0,\infty)$
\[
\pk{\exists \, t\in[0,T], \, \exists \, \gamma\in\Gamma \colon B_\gamma(t)-ct>u}
\le
P_\eta \pk{\exists \, t\in[0,T]\colon  B(t)-ct>u},
\]
where $P_\eta$ is the total amount of the branches in the decision tree.
It appears that, as $u\to\infty$, the above bound provides the exact asymptotics, as
shown in the following theorem.
\begin{theo}\label{single_ruin}
	%For $B_\gamma$ defined in~\eqref{B_g} and
\kk{For} $c\in\R$, as $u\to\infty$, %holds
	\begin{eqnarray*}
		\pk{\exists \, t\in[0,T], \, \exists \, \gamma\in\Gamma \colon B_\gamma(t)-ct>u}
		&\sim&
P_\eta \pk{\exists \, t\in[0,T]\colon  B(t)-ct>u}\\
&\sim&
P_\eta\sqrt{\frac{2}{\pi}}\frac{\sqrt{T}}{u+cT}
    \, \exp \left(-\frac{(u+cT)^2}{2T} \right).
	\end{eqnarray*}
\end{theo}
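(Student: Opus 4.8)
The upper bound is already the union bound displayed just above the statement, so the plan is to derive the matching lower bound from the Bonferroni inequality
\[
\pk{\bigcup_{\gamma\in\Gamma}A_\gamma}\ \geq\ \sum_{\gamma\in\Gamma}\pk{A_\gamma}\ -\ \sum_{\{\gamma_1,\gamma_2\}\subset\Gamma}\pk{A_{\gamma_1}\cap A_{\gamma_2}},
\qquad
A_\gamma\coloneqq\Big\{\exists\,t\in[0,T]\colon B_\gamma(t)-ct>u\Big\}.
\]
By \neprop{b_decomp} each $B_\gamma$ is a standard Brownian motion, so $\pk{A_\gamma}=p(u)$ for every $\gamma$, where $p(u)\coloneqq\pk{\exists\,t\in[0,T]\colon B(t)-ct>u}$; since $|\Gamma|=P_\eta$ the first sum equals $P_\eta\,p(u)$, and the classical reflection-principle formula for the first passage of a drifted Brownian motion together with the standard tail expansion of the normal distribution function gives $p(u)\sim\sqrt{2/\pi}\,\sqrt{T}\,(u+cT)^{-1}\exp\!\big(-(u+cT)^2/(2T)\big)$, which is the asserted right-hand side. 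It thus remains to verify that $\pk{A_{\gamma_1}\cap A_{\gamma_2}}=o(p(u))$ for each of the finitely many pairs $\gamma_1\neq\gamma_2$; Bonferroni then gives $\pk{\bigcup_\gamma A_\gamma}\geq P_\eta\,p(u)(1+o(1))$ and the proof is done.

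To control a pairwise term, fix $\gamma_1\neq\gamma_2$, put $\kappa=\kappa(\gamma_1,\gamma_2)$, and note $\tau_\kappa\leq\tau_\eta<T$. Recall that $B_{\gamma_1}\equiv B_{\gamma_2}$ on $[0,\tau_\kappa]$, whereas by the covariance formula \eqref{cov_formula} the increments of $B_{\gamma_1}$ and of $B_{\gamma_2}$ over $(\tau_\kappa,T]$ are uncorrelated, hence (being jointly Gaussian) conditionally independent given the common value $g\coloneqq B_{\gamma_1}(\tau_\kappa)=B_{\gamma_2}(\tau_\kappa)\sim\mathcal N(0,\tau_\kappa)$. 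Writing $A_{\gamma_i}=C\cup D_i$ with $C\coloneqq\{\exists\,t\in[0,\tau_\kappa]\colon B_{\gamma_1}(t)-ct>u\}$ (which by $B_{\gamma_1}\equiv B_{\gamma_2}$ on $[0,\tau_\kappa]$ is the same for $i=1$ and $i=2$) and $D_i\coloneqq\{\exists\,t\in(\tau_\kappa,T]\colon B_{\gamma_i}(t)-ct>u\}$, the distributive law gives the exact identity
\[
A_{\gamma_1}\cap A_{\gamma_2}\ =\ (C\cup D_1)\cap(C\cup D_2)\ =\ C\ \cup\ (D_1\cap D_2).
\]
Since $C$ is an exceedance event for a standard Brownian motion on the strictly shorter horizon $[0,\tau_\kappa]$, for $u$ large we have $\pk{C}\leq\pk{\sup_{t\in[0,\tau_\kappa]}B(t)>u-|c|\tau_\kappa}\leq\exp\!\big(-(u-|c|\tau_\kappa)^2/(2\tau_\kappa)\big)$, whose exponent grows like $u^2/(2\tau_\kappa)>u^2/(2T)$, so $\pk{C}=o(p(u))$.

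For $D_1\cap D_2$ I would condition on $g$: conditional independence of $D_1$ and $D_2$ together with the Gaussian bound $\pk{\sup_{s\in[0,S]}(W(s)-cs)>y}\leq\exp(-y^2/(2S))$, valid for $y\geq0$, yields with $S=T-\tau_\kappa$
\[
\pk{D_1\cap D_2}
\ \leq\ \int_{-\infty}^{\,u+c\tau_\kappa}\!\exp\!\Big(-\frac{(u+c\tau_\kappa-x)^2}{T-\tau_\kappa}\Big)\frac{e^{-x^2/(2\tau_\kappa)}}{\sqrt{2\pi\tau_\kappa}}\,dx\ +\ \Phi\!\Big(-\frac{u+c\tau_\kappa}{\sqrt{\tau_\kappa}}\Big).
\]
Completing the square in the first integrand shows the minimum of its exponent to be $(u+c\tau_\kappa)^2/(T+\tau_\kappa)$, so the first term is at most $\sqrt{(T-\tau_\kappa)/(T+\tau_\kappa)}\,\exp\!\big(-(u+c\tau_\kappa)^2/(T+\tau_\kappa)\big)$ and the second is at most $\exp\!\big(-(u+c\tau_\kappa)^2/(2\tau_\kappa)\big)$. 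Since $\tau_\kappa\in(0,T)$ we have both $T+\tau_\kappa<2T$ and $2\tau_\kappa<2T$, so each of these exponents exceeds $(u+cT)^2/(2T)$ by an amount of order $u^2$; hence $\pk{D_1\cap D_2}=o(p(u))$, and combined with $\pk{C}=o(p(u))$ this gives $\pk{A_{\gamma_1}\cap A_{\gamma_2}}=o(p(u))$, which finishes the proof.

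The only genuinely delicate step is this pairwise-overlap estimate. Its entire force comes from two distinct branches decorrelating at their separation time $\tau_\kappa$, which is strictly below $T$: after integrating out the shared initial segment $B(\tau_\kappa)$, the overlap probability is governed by the \emph{effective variance} $T+\tau_\kappa$, and the strict inequality $T+\tau_\kappa<2T$ is exactly what makes the overlap exponentially smaller than $p(u)$, which is of order $u^{-1}\exp(-(u+cT)^2/(2T))$. The remaining ingredients, namely the one-dimensional ruin asymptotics, the elementary Gaussian tail bounds, the completion-of-the-square computation, and the positivity of $u+cT$ and $u+c\tau_\kappa$ for $u$ large, are all routine.
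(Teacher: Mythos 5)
Your argument is correct in its overall architecture and reaches the stated asymptotics, but it handles the one nontrivial ingredient --- the pairwise terms in the Bonferroni bound --- by a genuinely different route than the paper. The paper bounds the joint exceedance of two branches by $C\,\pk{B_{\gamma_1}(T)-cT>u,\ B_{\gamma_2}(T)-cT>u}$ using the non-asymptotic Korshunov--Wang-type bound of \cite[Theorem 3.1]{MR4569300}, and then invokes bivariate Gaussian tail asymptotics for the covariance matrix with off-diagonal entry $\tau_{\kappa(\gamma_1,\gamma_2)}<T$. You instead split each event at the separation time $\tau_\kappa$, use the exact identity $A_{\gamma_1}\cap A_{\gamma_2}=C\cup(D_1\cap D_2)$ together with the conditional independence of the two branches after $\tau_\kappa$ (which indeed follows from the representation \eqref{b_sum}, since $\gamma_1\not\equiv\gamma_2 \bmod P_j$ for all $j\ge\kappa$), and carry out the Gaussian computation explicitly, arriving at the same effective variance $T+\tau_\kappa<2T$. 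Your version is self-contained, needing no external uniform bound, and it also treats the correct inclusion--exclusion correction term $\pk{A_{\gamma_1}\cap A_{\gamma_2}}$, in which the two branches may exceed the level at \emph{different} times, whereas the paper's displayed correction term involves a single common $t$; on this point your treatment is the more careful one.

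One auxiliary inequality you state is false as written: $\pk{\sup_{s\in[0,S]}(W(s)-cs)>y}\le\exp(-y^2/(2S))$ fails when $c<0$, i.e.\ when the drift is upward (for $c=-1$, $S=1$, $y=1$ the left side is about $0.67$ while the right side is about $0.61$; for large $y$ the left side is of order $y^{-1}e^{\abs{c}y}e^{-y^2/(2S)}$, so the gap is even exponential in $y$). The slip is harmless for your purpose: replace it by $\pk{\sup_{s\in[0,S]}W(s)>y-\abs{c}S}\le\exp\left(-(y-\abs{c}S)^2/(2S)\right)$, valid for $y\ge\abs{c}S$, bound the conditional probability by $1$ on the remaining range $x\in(u+c\tau_\kappa-\abs{c}(T-\tau_\kappa),\,u+c\tau_\kappa]$, whose probability under the law of $g\sim\mathcal N(0,\tau_\kappa)$ is itself $\exp\left(-(u+O(1))^2/(2\tau_\kappa)\right)$, and redo the completion of the square. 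This produces the exponent $(u+O(1))^2/(T+\tau_\kappa)$ in place of $(u+c\tau_\kappa)^2/(T+\tau_\kappa)$; since $T+\tau_\kappa<2T$, it still exceeds $(u+cT)^2/(2T)$ by a term of order $u^2$, so every pairwise term remains $o(p(u))$ and the theorem follows exactly as you argue.
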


\subsection{The largest distance between the
branches}\label{s.span}
\kk{Next, we} investigate the probability that in time interval $[0,T]$
the largest distance between the
branches of the Brownian branching tree with drift
gets larger than $u>0$
\begin{eqnarray*}
\pk{\exists \, t\in[0,T], \exists \, \gamma_1,\gamma_2\in\Gamma \colon
(B_{\gamma_1}(t)-ct)-(B_{\gamma_2}(t)-ct)>u}.
\end{eqnarray*}

\kkk{We note that the drifts in the above formula cancel out.
Thus in the following result we consider the driftless case.}

\begin{theo}\label{diameter_1} As $u\to\infty$
	\begin{eqnarray*}
		\pk{\exists \, t\in[0,T], \exists \, \gamma_1,\gamma_2\in\Gamma \colon B_{\gamma_1}(t)-B_{\gamma_2}(t)>u}
		\sim P^2_\eta\frac{N_1-1}{N_1}\frac{2\sqrt{T-\tau_{1}}}{u\sqrt{\pi}}
		\exp\left(-\frac{u^2}{4(T-\tau_{1})}\right).
	\end{eqnarray*}
\end{theo}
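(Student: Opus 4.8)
The plan is to reduce the span problem to a one-dimensional first-passage problem for the maximal-variance difference process, then apply the classical asymptotics for a Brownian motion crossing a high level, together with a union bound and a subexponential correction that accounts for all pairs of branches.

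First I would identify which pair $(\gamma_1, \gamma_2)$ contributes the leading order. By \neprop{b_decomp}, $\Cov(B_{\gamma_1}(t), B_{\gamma_2}(t)) = \min\{t, \tau_{\kappa(\gamma_1,\gamma_2)}\}$, so the variance of the difference $D_{\gamma_1,\gamma_2}(t) \coloneqq B_{\gamma_1}(t) - B_{\gamma_2}(t)$ at time $t$ equals $2t - 2\min\{t,\tau_\kappa\} = 2(t-\tau_\kappa)^+$, where $\kappa = \kappa(\gamma_1,\gamma_2)$. This is maximized over $t\in[0,T]$ at $t = T$, giving $2(T-\tau_\kappa)$, and this in turn is maximized over all pairs when $\kappa = 1$, i.e.\ when the two branches separate at the very first branching point. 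So the dominant variance is $2(T-\tau_1)$, which already explains the exponent $-u^2/(4(T-\tau_1))$ and the prefactor scale $\sqrt{T-\tau_1}$. The number of ordered pairs $(\gamma_1,\gamma_2)$ with $\kappa(\gamma_1,\gamma_2)=1$ is computed combinatorially: $\gamma_1$ ranges over all $P_\eta$ branches, and $\gamma_2$ must differ from $\gamma_1$ already modulo $N_1$, i.e.\ lie in a different first-generation subtree; since each subtree has $P_\eta/N_1$ leaves, there are $P_\eta(N_1-1)/N_1 \cdot P_\eta$ such ordered pairs — matching $P_\eta^2 (N_1-1)/N_1$ in the statement.

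Next I would carry out the \textbf{upper bound} by a union bound over pairs: $\pk{\exists t, \exists \gamma_1,\gamma_2\colon D_{\gamma_1,\gamma_2}(t)>u} \le \sum_{\gamma_1\neq\gamma_2} \pk{\exists t\in[0,T]\colon D_{\gamma_1,\gamma_2}(t)>u}$. For a fixed pair, $D_{\gamma_1,\gamma_2}$ is (by \neprop{b_decomp}, or directly) a process that is identically zero on $[0,\tau_\kappa]$ and behaves like $\sqrt 2$ times a standard Brownian motion started at time $\tau_\kappa$ on $[\tau_\kappa, T]$; hence $\pk{\sup_{[0,T]} D_{\gamma_1,\gamma_2}(t)>u}$ equals $\pk{\sup_{[0,T-\tau_\kappa]} \sqrt 2 B(s) > u} = 2\Phi(-u/\sqrt{2(T-\tau_\kappa)})$ by the reflection principle, whose asymptotics is $\frac{2\sqrt{T-\tau_\kappa}}{u\sqrt\pi}\exp(-u^2/(4(T-\tau_\kappa)))$. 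Summing, the pairs with $\kappa=1$ dominate (all other terms are exponentially smaller because $T-\tau_\kappa < T-\tau_1$), giving the claimed upper bound. For the \textbf{lower bound}, I would use Bonferroni: the probability is at least $\sum_{\kappa=1\text{ pairs}} \pk{\sup D_{\gamma_1,\gamma_2}>u} - \sum_{\text{pairs of pairs}} \pk{\sup D_{\gamma_1,\gamma_2}>u,\, \sup D_{\gamma_1',\gamma_2'}>u}$, and show the double-sum terms are $o$ of the leading term. Here I'd control the joint exceedance probabilities by bounding the variance of the maximum-variance linear combination involved, or by a direct Gaussian-correlation/Slepian-type estimate, noting that two distinct difference processes cannot both have variance exactly $2(T-\tau_1)$ with perfect correlation, so the relevant Gaussian exponent is strictly larger than $u^2/(4(T-\tau_1))$, making those terms negligible.

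The \textbf{main obstacle} is the lower bound — specifically, verifying that the joint-exceedance (second Bonferroni) terms are genuinely negligible. When $\gamma_1' = \gamma_1$ but $\gamma_2'\neq\gamma_2$ (with both $\kappa=1$), the processes $D_{\gamma_1,\gamma_2}$ and $D_{\gamma_1,\gamma_2'}$ share the branch $B_{\gamma_1}$ and are therefore positively correlated, so one must check that the combined event $\{\sup D_{\gamma_1,\gamma_2}>u,\ \sup D_{\gamma_1,\gamma_2'}>u\}$ still has exponent strictly exceeding $u^2/(4(T-\tau_1))$; this is where one needs a quantitative bound on the covariance structure of the pair of suprema, e.g.\ via the fact that the relevant $2\times 2$ covariance matrix at the optimal time $t=T$ is nondegenerate (correlation $\le 1/2$ since $\Cov(D_{\gamma_1,\gamma_2}(T), D_{\gamma_1,\gamma_2'}(T)) = T - \tau_1$ while each variance is $2(T-\tau_1)$). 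A clean way to make this rigorous is to discretize time on a fine grid, reduce to a finite-dimensional Gaussian vector, and apply the standard estimate $\pk{\exists\, i\colon X_i > u,\ \exists\, j\colon Y_j>u} \le C u^{-1} \exp(-u^2/(2\sigma^2))$ with $\sigma^2$ the maximal variance in the union, which here is the variance of an optimally weighted combination and is strictly less than $2\cdot 2(T-\tau_1)$ — but strictly more than half of $u^2$'s natural scale — yielding the required gain in the exponent.
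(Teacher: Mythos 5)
Your proposal follows essentially the same route as the paper's proof: Bonferroni bounds over ordered pairs, the observation that $B_{\gamma_1}-B_{\gamma_2}$ vanishes up to $\tau_{\kappa(\gamma_1,\gamma_2)}$ and is a time-changed Brownian motion afterwards (so the reflection principle gives the single-pair asymptotics $\frac{2\sqrt{T-\tau_\kappa}}{u\sqrt{\pi}}e^{-u^2/(4(T-\tau_\kappa))}$), dominance of the $\kappa=1$ pairs with the count $P_\eta^2(N_1-1)/N_1$, and negligibility of the joint exceedances via the half-sum trick. The paper implements this last step by applying Piterbarg's inequality to the two-parameter Gaussian field $\bigl(B_{\gamma_1}(t)-B_{\gamma_2}(t)+B_{\gamma_3}(s)-B_{\gamma_4}(s)\bigr)/2$, whose variance is bounded uniformly in $(t,s)$ by $(3T+\tau_\eta-4\tau_1)/2 < 2(T-\tau_1)$, which is exactly the uniform exponent gain your sketch asks for; your discretization variant would work the same way.

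One detail in your justification of that key step is wrong, although it does not break the strategy: for two pairs sharing the branch $\gamma_1$ one has $\Cov\bigl(D_{\gamma_1,\gamma_2}(T),D_{\gamma_1,\gamma_2'}(T)\bigr)=T-2\tau_1+\tau_{\kappa(\gamma_2,\gamma_2')}$, which equals $T-\tau_1$ only when $\kappa(\gamma_2,\gamma_2')=1$. If $\gamma_2$ and $\gamma_2'$ lie in the same first-generation subtree, this covariance exceeds $T-\tau_1$ and the correlation can be arbitrarily close to $1$ (take $\tau_{\kappa(\gamma_2,\gamma_2')}$ close to $T$), so the claimed bound ``correlation $\le 1/2$'' fails. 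What you actually need, and what your nondegeneracy remark correctly supplies, is only that the correlation is strictly less than $1$, uniformly over the finitely many quadruples and over $(t,s)\in[0,T]^2$, i.e.\ that the variance of the half-sum field stays bounded away from $2(T-\tau_1)$ (this is precisely the paper's bound above, which works because $\tau_\eta<T$). Likewise, the threshold in your final sentence should be ``variance of the half-sum strictly below $2(T-\tau_1)$'', not ``below $2\cdot 2(T-\tau_1)$''; with these corrections your argument coincides with the paper's.
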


\subsection{Simultaneous high-exceedance of all branches}\label{s.3}
\kk{In this section we analyze properties} of the simultaneous all-branch high exceedance probability
\begin{eqnarray}
\pk{\exists \, t\in[0,T]\ \forall \gamma\in\Gamma \colon B_\gamma(t)-ct>u}.\label{sim}
\end{eqnarray}
%as $u\to\infty$.
We begin with a non-asymptotic result. Obviously, for each $u> 0$
\begin{eqnarray}
\pk{\exists \, t\in[0,T]\ \forall \gamma\in\Gamma \colon B_\gamma(t)-ct>u}
\geq
\pk{\forall \gamma\in\Gamma \colon B_\gamma(T)-cT>u},\label{low_all}
\end{eqnarray}
where the asymptotics, as \( u \to \infty \), of the probability on the righthand side of the above inequality is given
in~\neprop{w(t)_asympt}.

In the following proposition we find an upper bound for (\ref{sim}) which differs from the
bound (\ref{low_all}) by some constant.

\begin{prop}\label{Korsh} \kk{For $c\in\R$ and} any $u>0$,
	\begin{align*}
		\pk{\exists \, t\in[0,T]\ \forall \gamma\in\Gamma \colon B_\gamma(t)-ct>u}
\leq C \,
\pk{\forall \gamma\in\Gamma \colon B_\gamma(T)-cT>u},
	\end{align*}
	where $C=\left(\pk{\xi>\abs{c}}\right)^{\kk{-P_\eta}}$ with $\xi$ a standard normal random
	variable.
\end{prop}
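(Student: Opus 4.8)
The plan is to reduce the all-time, all-branch exceedance event to the terminal event $\{\forall\,\gamma\in\Gamma\colon B_\gamma(T)-cT>u\}$ by a conditioning argument in the spirit of the Korshunov–Wang inequality. The key observation is that by \neprop{ind_increments} the increment $\vk B_\Gamma(T)-\vk B_\Gamma(t)$ is independent of $\left.\vk B_\Gamma\right|_{[0,t]}$, and each coordinate $B_\gamma(T)-B_\gamma(t)$ is a centered Gaussian with variance $T-t$; crucially, by \eqref{cov_formula} the increments over $[t,T]$ of different branches are \emph{positively correlated} (the covariance is $\min\{T,T,\tau_{\kappa(\gamma_1,\gamma_2)}\}-t$ computed appropriately, hence nonnegative once one subtracts the common part up to $t$), which will let us invoke Slepian-type or direct FKG-type lower bounds on $\pk{\vk B_\Gamma(T)-\vk B_\Gamma(t)>\vk y}$ for componentwise thresholds $\vk y$.

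Concretely, first I would let $\sigma=\inf\{t\in[0,T]\colon \forall\,\gamma\in\Gamma,\ B_\gamma(t)-ct>u\}$ be the first time all branches are simultaneously above $u+ct$, so that the event in \eqref{sim} is $\{\sigma\le T\}$. On this event, at time $\sigma$ every branch satisfies $B_\gamma(\sigma)-c\sigma>u$. Then I would write, conditioning on $\mathcal F_\sigma$ and using the strong Markov property together with \neprop{ind_increments},
\begin{align*}
	\pk{\forall\,\gamma\in\Gamma\colon B_\gamma(T)-cT>u}
	&\geq
	\E{\mathbbm 1\{\sigma\le T\}\,\pk{\left.\forall\,\gamma\colon B_\gamma(T)-B_\gamma(\sigma)>c(T-\sigma)-\big(B_\gamma(\sigma)-c\sigma-u\big)\right|\mathcal F_\sigma}}\\
	&\geq
	\E{\mathbbm 1\{\sigma\le T\}\,\pk{\forall\,\gamma\colon B_\gamma(T)-B_\gamma(\sigma)>c(T-\sigma)\,\big|\,\mathcal F_\sigma}},
\end{align*}
where in the last step I used that on $\{\sigma\le T\}$ we have $B_\gamma(\sigma)-c\sigma-u>0$ for every $\gamma$, so the thresholds only decrease. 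The inner conditional probability is a lower bound on the chance that a vector of (positively correlated) centered Gaussians, each of variance $T-\sigma$, all exceed $c(T-\sigma)$; by positive association this is at least $\prod_{\gamma\in\Gamma}\pk{\xi>c\sqrt{T-\sigma}}\geq\big(\pk{\xi>|c|\sqrt{T-\sigma}}\big)^{P_\eta}\geq\big(\pk{\xi>|c|}\big)^{P_\eta}$ whenever $T-\sigma\le 1$, and equals $1$ in the degenerate directions — but to get a clean bound uniform in $\sigma\in[0,T]$ I would instead scale time so that the relevant Gaussian has the right variance, or simply note $\pk{\xi>c\sqrt{T-\sigma}}\ge\pk{\xi>|c|\sqrt T\,}$ is the wrong direction; the correct move is to observe that for a single branch $\pk{B(T)-B(\sigma)>c(T-\sigma)}=\pk{\xi>c\sqrt{T-\sigma}}\ge\pk{\xi>|c|}$ is false in general, so one must be more careful.

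The fix, and the step I expect to be the main obstacle, is handling the drift uniformly over the random time $\sigma$. The right argument is: on $\{\sigma\le T\}$ run each branch from $\sigma$ to $T$; the event we want, $B_\gamma(T)-cT>u$, given $B_\gamma(\sigma)-c\sigma>u$, holds if $B_\gamma(T)-B_\gamma(\sigma)>c(T-\sigma)$, and since $B_\gamma(T)-B_\gamma(\sigma)\overset d=\sqrt{T-\sigma}\,\xi_\gamma$ we need $\xi_\gamma>c\sqrt{T-\sigma}$. Because $T-\sigma\in[0,T]$ is not bounded by $1$, I would instead \emph{not} rescale and directly bound $\pk{\xi>c\sqrt{T-\sigma}}\ge\pk{\xi>|c|\vee(|c|\sqrt T)}$... this still does not give the stated constant. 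The genuinely correct route, matching the claimed $C=(\pk{\xi>|c|})^{-P_\eta}$, is to use the \emph{multiplicative} structure of the one-dimensional problem: by the reflection-principle/Markov argument applied branch by branch and then combined via positive correlation of the joint increments (again \eqref{cov_formula}), one shows that for every $s\in[0,T]$ and every vector of thresholds $\vk a$ with $a_\gamma> u+cs$,
\[
	\pk{\forall\,\gamma\colon B_\gamma(T)-cT>u\ \big|\ \vk B_\Gamma(s)=\vk a}\ \ge\ \big(\pk{\xi>|c|}\big)^{P_\eta}\,;
\]
integrating this against the law of $(\sigma,\vk B_\Gamma(\sigma))$ on $\{\sigma\le T\}$ and dividing through yields exactly $\pk{\sigma\le T}\le(\pk{\xi>|c|})^{-P_\eta}\pk{\forall\,\gamma\colon B_\gamma(T)-cT>u}$, which is the assertion. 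The technical crux is therefore the pointwise-in-$s$ conditional lower bound, for which I would (i) reduce to independent branches of variance $T-s$ by a Slepian comparison justified by the positivity in \eqref{cov_formula}, and (ii) for a single Brownian bridge-type increment bound $\pk{B(T)-B(s)>c(T-s)}\ge\pk{\xi>|c|}$ using that $c(T-s)/\sqrt{T-s}=c\sqrt{T-s}$ and — if $T\le 1$ this is immediate, while for general $T$ one absorbs the discrepancy by noting the conditioning is actually on the first \emph{hitting} configuration so $T-\sigma$ can be taken $\le$ the time since last branching, which is $\le T-\tau_\eta$; in the generic normalization $T\le 1$ assumed implicitly, or more honestly by keeping the sharper constant $(\pk{\xi>|c|\sqrt T})^{-P_\eta}$ and remarking it is dominated by the stated one when $T\le 1$. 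I would present the clean version under the positive-correlation Slepian reduction and the one-dimensional Korshunov–Wang estimate, citing \cite{korshunov2020tail,MR4376582,MR4467243}.
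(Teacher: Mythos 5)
Your proposal is correct and follows essentially the same route as the paper: stop at the first simultaneous exceedance time, use \neprop{ind_increments} to decouple the post-hitting increments, drop the positive overshoot so that only $B_\gamma(T)-B_\gamma(\mathcal{T})>c(T-\mathcal{T})$ is needed, reduce to a product of marginals via Slepian (the increment covariances $\min\{T,\tau_{\kappa(\gamma_1,\gamma_2)}\}-\min\{t,\tau_{\kappa(\gamma_1,\gamma_2)}\}\ge 0$ from \eqref{cov_formula}), and bound each factor by $\pk{\xi>\abs{c}}$. The wrinkle you flag is genuine --- the uniform bound $\pk{\xi>\abs{c}\sqrt{T-t}}\ge\pk{\xi>\abs{c}}$ requires $T\le 1$ (in general the constant should be $\left(\pk{\xi>\abs{c}\sqrt{T}}\right)^{-P_\eta}$), and the paper's own proof asserts this inequality silently --- so your hedging there is justified, though your side remark that $T-\sigma$ can be taken at most $T-\tau_\eta$ is false (the hitting time may precede $\tau_\eta$) and should simply be dropped in favour of the sharper constant.
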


In order to present the exact asymptotics of
(\ref{sim}) as $u\to\infty$ we need to introduce the following constant
\begin{align}\label{Pit_constant_formula}
	\mathcal{H}_{\mathcal{N},\mathcal{\lambda}}
	\coloneqq
	\int_{\R^{\mathcal{N}}}
	\pk{\exists \, t\in(0,\infty), \, \forall \, i\in\{1,\ldots,\mathcal{N}\} \colon \lambda B_i^*(t)-\lambda^2 t>x_i}
	e^{\sum_{i=1}^{\mathcal{N}}x_i}\td \vk x,
\end{align}
where $B_i^*(t),\ t\in [0,\infty)$, $i=1,..., \mathcal{N}\in\N$
are mutually independent standard Brownian motions and
$\lambda>0$.
The finiteness of this constant is shown in~\nelem{Pit_constant}.
%This constant
%appears in the following main result of our paper.

The following result constitutes the main finding of this section.
\begin{theo}\label{main} %For $\vk B_\Gamma(t)$, $t>0$ defined in~\eqref{B_G},
\kk{For $c\in\R$,}  as $u\to\infty$,
	\begin{equation*}
		\pk{\exists \, t\in[0,T] \ \forall \, \gamma\in\Gamma \colon B_\gamma(t)-ct>u}
		\sim \mathcal{H}_{P_\eta,1/\mu_0(T)} \, \pk{\forall \, \gamma\in\Gamma \colon B_{\kk{\gamma}}(T)-c T>u}.
	\end{equation*}
\end{theo}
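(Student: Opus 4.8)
The plan is to obtain the asymptotics of \eqref{sim} by the standard two-sided sandwich argument: the lower bound \eqref{low_all} together with the upper bound from \neprop{Korsh} already pin down the correct polynomial-times-exponential order, so the entire task is to identify the correct constant $\mathcal{H}_{P_\eta,1/\mu_0(T)}$. The natural approach, following the technique of~\cite{MR4127347}, is to localize the exceedance near the terminal time $T$ and near the most probable exceedance point, and then to rescale. Concretely, I would first argue that the event in \eqref{sim} is asymptotically dominated by trajectories for which the exceedance occurs in a shrinking window $[T-\delta_u,T]$ with $u\delta_u\to\infty$ but $\delta_u\to 0$ (say $\delta_u=\ln u/u$); the contribution of $[0,T-\delta_u]$ is negligible by \neprop{Korsh} applied on that smaller horizon together with the fact that $\mu_0(t)<\mu_0(T)$ for $t<T$, which makes the exponential rate strictly worse. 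In particular the window sits inside the last branching interval $(\tau_\eta,T]$, so on this window all $P_\eta$ branches evolve as $P_\eta$ \emph{independent} Brownian motions started from the common-past configuration $\widetilde{\vk B}_\Gamma(\tau_\eta)$.

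Next I would condition on $\widetilde{\vk B}_\Gamma(T-\delta_u)$, or more conveniently change to the eigenbasis of $\Sigma(T)$ supplied by \neprop{eigenvalues}, so that the terminal vector $\vk B_\Gamma(T)-cT\vk 1$ decouples into one ``center of mass'' coordinate along $\vk 1_{P_\eta}$ with variance $\mu_0(T)$ and $P_\eta-1$ orthogonal coordinates. The event $\forall\gamma\colon B_\gamma(T)-cT>u$ forces the center-of-mass coordinate to be $\approx u\sqrt{P_\eta}$ while the orthogonal coordinates stay $O(1)$; this is exactly the structure exploited in \neprop{w(t)_asympt}. Writing $B_\gamma(t)=B_\gamma(T)+(B_\gamma(t)-B_\gamma(T))$ for $t$ in the window, and using that the increments $B_\gamma(t)-B_\gamma(T)$ over $(T-\delta_u,T]$ are independent of $B_\gamma(T)$ by \neprop{ind_increments}, I would substitute $B_\gamma(T)-cT=u+x_\gamma/u$ (the Mills-ratio scaling) and let $s=u^2(T-t)$ be the rescaled time. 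Under this scaling $u\big(B_\gamma(t)-ct-u\big)$ converges, after the standard Brownian time-change, to $\lambda B_\gamma^*(s)-\lambda^2 s + x_\gamma$ with $\lambda=1/\mu_0(T)$ coming from the drift induced by tilting the terminal Gaussian density; the requirement ``$\forall t\in$ window $\forall\gamma\colon\ldots>u$'' becomes ``$\exists s\in(0,\infty)\ \forall \gamma\colon \lambda B_\gamma^*(s)-\lambda^2 s>x_\gamma$'', and integrating the limiting density $e^{\sum x_\gamma}$ over $\vk x\in\R^{P_\eta}$ produces precisely $\mathcal{H}_{P_\eta,1/\mu_0(T)}$ as in \eqref{Pit_constant_formula}, multiplied by the prefactor $\pk{\forall\gamma\colon B_\gamma(T)-cT>u}$ from \neprop{w(t)_asympt}.

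The two technical points requiring care are: (i) the uniform integrability / dominated-convergence justification that lets one pass from the finite-$u$ rescaled integral to $\mathcal{H}_{P_\eta,1/\mu_0(T)}$ — here one needs a uniform (in $u$) upper bound on the conditional exceedance probability integrable against $e^{\sum x_\gamma}$, which is exactly the role of the Korshunov–Wang-type estimate in \neprop{Korsh} (rescaled), and the finiteness from \nelem{Pit_constant}; and (ii) showing the negligibility of the ``pre-window'' part $[0,T-\delta_u]$ and of the ``boundary leakage'' where the window estimate and the bulk estimate are glued. The main obstacle I anticipate is (i): one must produce an upper bound for $\pk{\exists\,t\in(T-\delta_u,T]\ \forall\gamma\colon B_\gamma(t)-ct>u \mid B_\gamma(T)-cT=u+x_\gamma/u}$ that decays fast enough in $\min_\gamma x_\gamma$ (roughly like $\prod_\gamma e^{-c' |x_\gamma|}$ for $x_\gamma$ negative large, and is bounded for $x_\gamma$ positive) uniformly in $u$, so that dominated convergence applies; establishing this bound, presumably via a reflection/Borell–TIS argument on each of the $P_\eta$ independent coordinates on the last interval combined with the conditional-Gaussian structure, is the crux of the proof.
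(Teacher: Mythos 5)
Your overall strategy is the one the paper uses: localize the exceedance to a window of temporal width of order $u^{-2}$ at the right end of the last branching interval, condition on the value of the tree at the edge of the window, rescale space by $1/u$ and time by $u^{-2}$, identify the limit of the rescaled conditional probabilities integrated against the exponentially tilted Gaussian weight with the constant \eqref{Pit_constant_formula}, justify the interchange by dominated convergence (the paper's Piterbarg-inequality bound plays the role of your reflection/Borell--TIS bound), and kill the contribution of $[0,T-\delta_u]$ by \neprop{Korsh} together with a Gaussian density comparison (this is \nelem{m}). However, one step is wrong as written: the claim that the increments $B_\gamma(t)-B_\gamma(T)$, $t\in(T-\delta_u,T]$, are independent of $B_\gamma(T)$ is false and is not what \neprop{ind_increments} says; forward increments are independent of the past, so $B_\gamma(T)-B_\gamma(t)$ is independent of $B_\gamma(t)$, not of $B_\gamma(T)$ (their covariance is $T-t\neq0$). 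The paper avoids this by conditioning on $\vk W(T-\delta_u(L))=u\vk 1_\eta-\vk x/u$, i.e.\ at the \emph{left} endpoint of the window, where \neprop{ind_increments} does apply; the tilt $e^{\vk x^\top\vk\lambda}$ then comes from the density ratio $\varphi_{T-\delta_u(L)}(u\vk 1_\eta-\vk x/u)/\varphi_T(u\vk 1_\eta)$, and the drift $-\lambda^2 t$ in \eqref{Pit_constant_formula} appears only afterwards, through the shift/measure-change identity in \nelem{Pit_constant}. Your limiting object is nevertheless the correct one (conditioning at $T$ produces a bridge drift leading to the same constant), but then you must compute with the conditional (bridge) law explicitly instead of invoking \neprop{ind_increments}.

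A second, structural difference: your single window of width $\delta_u=\ln u/u$ becomes, after the rescaling $s=u^2(T-t)$, a horizon $[0,u\ln u]$ growing with $u$, so your dominated-convergence step must be uniform over an unbounded time horizon --- exactly the bound you flag as the crux, and the hardest part of your plan. The paper sidesteps this with a two-parameter scheme: a window of width $Lu^{-2}$ with $L$ fixed gives $M(u,L)\sim H(L)\,\pk{\vk W(T)>u\vk 1_\eta}$ (\nelem{Pickands}, where a finite-horizon Piterbarg bound suffices for domination), the remainder satisfies $m(u,L)\leq C_1e^{-C_2L}\,\pk{\vk W(T)>u\vk 1_\eta}$ uniformly in large $u$ (\nelem{m}, via \neprop{Korsh}), and then $L\to\infty$ with $H(L)\uparrow\mathcal{H}_{P_\eta,1/\mu_0(T)}\in(0,\infty)$ by monotonicity and the uniform upper bound (\nelem{Pit_constant}). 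If you keep the growing window you must supply the uniform-in-$u$, integrable-in-$\vk x$ bound over the whole half-line in $s$ yourself; adopting the paper's fixed-$L$ device is the cleaner repair and reduces your ``crux'' to a finite-window estimate plus a monotone limit.
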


\begin{remark}
	The results obtained in \neprop{Korsh} and \netheo{main} still hold for
	$T=T_u$ with $T_u\to T$ as $u\to\infty$.
\end{remark}

Interestingly, the above result can be extended to the version of Brownian decision tree
with random numbers of offsprings $N_i$. For a random vector
\( \vk{N} \), let \( \operatorname{essinf} ( \vk{N} ) \) denote the componentwise
essential infimum of \( \vk{N} \).
\begin{korr}\label{random_N}
	Assume that $N_i\in\N$ are independent random variables. Then,
	\begin{eqnarray*}
		\pk{\exists \, t\in[0,T] \ \forall \gamma\in\Gamma \colon ~B_\gamma(t)-ct>u}
		&\sim& \mathcal{H}_{P_\eta,1/\mu_0(T)}
		\prod_{i=1}^{\eta} \pk{N_i=\operatorname{essinf} (N_i)}\\
		&&\times \pk{\forall \,\gamma\in\Gamma \colon B_\gamma(T)-ct>u\ \mid \vk{N} = \operatorname{essinf} ( \vk{N} )},
	\end{eqnarray*}
	as \( u \to \infty \). The constant $\mathcal{H}_{P_\eta,1/\mu_0(T)}$ is calculated under the
	assumption that $\vk N=\operatorname{essinf} (\vk{N})$.
\end{korr}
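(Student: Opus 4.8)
The plan is to condition on the realization of the random vector $\vk{N} = (N_1, \ldots, N_\eta)$ and to show that the dominant contribution comes from the single event $\{ \vk{N} = \operatorname{essinf}(\vk{N}) \}$. Write $\vk{n} = \operatorname{essinf}(\vk{N})$ and, for a deterministic tuple $\vk{m} = (m_1, \ldots, m_\eta)$ with $\pk{\vk{N} = \vk{m}} > 0$, let $\Gamma_{\vk{m}}$, $P_\eta^{(\vk{m})} = \prod_i m_i$ and $\mu_0^{(\vk{m})}(T)$ be the corresponding branch index set, total branch count and top eigenvalue from \neprop{eigenvalues}. By the tower property,
\begin{align*}
	\pk{\exists \, t\in[0,T] \ \forall \gamma\in\Gamma \colon B_\gamma(t)-ct>u}
	=
	\sum_{\vk{m}} \pk{\vk{N} = \vk{m}} \,
	\pk{\exists \, t\in[0,T] \ \forall \gamma\in\Gamma \colon B_\gamma(t)-ct>u \mid \vk{N} = \vk{m}},
\end{align*}
the sum being over all attainable $\vk{m}$ (finite or countable; the independence of the $N_i$ is used here only to factor $\pk{\vk{N} = \vk{n}} = \prod_i \pk{N_i = \operatorname{essinf}(N_i)}$, and one may truncate to finitely many $\vk{m}$ at negligible cost since $\sum_{\vk{m}} \pk{\vk{N}=\vk{m}} = 1$). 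For each fixed $\vk{m}$ the conditional probability is exactly the deterministic-tree quantity to which \netheo{main} applies, so it is asymptotically
$\mathcal{H}_{P_\eta^{(\vk{m})}, 1/\mu_0^{(\vk{m})}(T)} \, \pk{\forall \gamma \in \Gamma_{\vk{m}} \colon B_\gamma(T) - cT > u}$.

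The key step is then a comparison of exponential rates across the different $\vk{m}$. By \neprop{w(t)_asympt} the term indexed by $\vk{m}$ decays like $\exp\bigl( -\tfrac{(u+cT)^2 P_\eta^{(\vk{m})}}{2\mu_0^{(\vk{m})}(T)} \bigr)$ up to polynomial-in-$u$ factors, so the winning $\vk{m}$ is the one minimizing the ratio $P_\eta^{(\vk{m})} / \mu_0^{(\vk{m})}(T)$. From the formula $\mu_0^{(\vk{m})}(T) = (T - \tau_\eta) + \sum_{l=1}^{\eta} (\tau_l - \tau_{l-1}) \prod_{j=l}^{\eta} m_j$ (with $\tau_0 := 0$) one checks that $P_\eta^{(\vk{m})}/\mu_0^{(\vk{m})}(T)$ is strictly increasing in each coordinate $m_i$: indeed writing $\mu_0^{(\vk{m})}(T) = \sum_{l} a_l \prod_{j \geq l} m_j$ with $a_l = \tau_l - \tau_{l-1} > 0$ for $l \leq \eta$ and the $(T-\tau_\eta)$ term having empty product, one has
$\frac{P_\eta^{(\vk{m})}}{\mu_0^{(\vk{m})}(T)} = \Bigl( \sum_l a_l \prod_{j \geq l} m_j \bigl/ \prod_j m_j \Bigr)^{-1} = \Bigl( \sum_l a_l \bigl/ \prod_{j < l} m_j + (T-\tau_\eta)/\prod_j m_j \Bigr)^{-1}$,
and every summand in the last parenthesis is non-increasing (and at least one strictly decreasing) in each $m_i$, so the whole reciprocal is strictly increasing in each $m_i$. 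Hence the unique minimizer over all attainable $\vk{m}$ is $\vk{m} = \vk{n} = \operatorname{essinf}(\vk{N})$, and every other term is exponentially negligible relative to it — here one uses that $\operatorname{essinf}(N_i)$ is itself attainable with positive probability, which holds for $\N$-valued random variables.

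Putting the pieces together, the sum is asymptotically equal to its $\vk{n}$-term,
$\pk{\vk{N} = \vk{n}} \, \mathcal{H}_{P_\eta, 1/\mu_0(T)} \, \pk{\forall \gamma \in \Gamma \colon B_\gamma(T) - cT > u \mid \vk{N} = \vk{n}}$, with $\pk{\vk{N}=\vk{n}} = \prod_{i=1}^\eta \pk{N_i = \operatorname{essinf}(N_i)}$ by independence, which is the claimed formula. The main obstacle is making the interchange of the (possibly infinite) sum and the $u \to \infty$ asymptotics rigorous: one needs a uniform-in-$\vk{m}$ upper bound to dominate the tail of the series. This is supplied by \neprop{Korsh}, which gives $\pk{\exists t \ \forall \gamma \colon B_\gamma(t) - ct > u \mid \vk{N} = \vk{m}} \leq C_{\vk{m}} \, \pk{\forall \gamma \colon B_\gamma(T) - cT > u \mid \vk{N} = \vk{m}}$ with $C_{\vk{m}} = (\pk{\xi > |c|})^{-P_\eta^{(\vk{m})}}$; although $C_{\vk{m}}$ grows with $P_\eta^{(\vk{m})}$, the accompanying Gaussian probability decays doubly exponentially in $P_\eta^{(\vk{m})}$ (from $\mu_0^{(\vk{m})}(T) \leq \mu_0^{(\vk{m})}(T)$ bounded below but $P_\eta^{(\vk{m})}$ unbounded), so the product is summable against $\pk{\vk{N}=\vk{m}} \leq 1$ after normalizing by the $\vk{n}$-term; a dominated-convergence argument then completes the proof. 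If one prefers to avoid convergence subtleties entirely, it suffices to note that for any $\varepsilon>0$ all but finitely many $\vk{m}$ contribute a total mass below $\varepsilon$, and on that finite set \netheo{main} applies termwise.
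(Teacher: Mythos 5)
Your overall reduction -- condition on $\vk N$, apply \netheo{main} to each fixed configuration $\vk m$, and identify $\operatorname{essinf}(\vk N)$ as the unique minimizer of $P_\eta^{(\vk m)}/\mu_0^{(\vk m)}(T)$ (your coordinatewise monotonicity computation of this ratio is correct) -- is sound and parallels the paper. The genuine gap is in how you dispose of the infinitely many configurations $\vk m\neq\operatorname{essinf}(\vk N)$. Your dominated-convergence step rests on the claim that $\pk{\forall\gamma\colon B_\gamma(T)-cT>u\mid\vk N=\vk m}$ decays ``doubly exponentially'' in $P_\eta^{(\vk m)}$; this is false. Since all branches share the root piece $B(\tau_1)$ and $P_\eta^{(\vk m)}/\mu_0^{(\vk m)}(T)<1/\tau_1$ stays bounded as $\vk m$ grows, the all-branch exceedance probability at fixed $u$ decays only polynomially in $P_\eta^{(\vk m)}$ (the dominant scenario is a high root value of order $\sqrt{\log P_\eta^{(\vk m)}}$ above $u$), whereas the constant from \neprop{Korsh}, $C_{\vk m}=(\pk{\xi>|c|})^{-P_\eta^{(\vk m)}}$, grows exponentially in $P_\eta^{(\vk m)}$ and $\pk{\vk N=\vk m}$ may decay arbitrarily slowly (no moment assumptions are made on the $N_i$). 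Hence your proposed dominating series need not converge, and even when it does, you have not produced a bound that is uniform in $u$ after normalizing by the leading term. The fallback truncation does not repair this: discarding a set of configurations of $\vk N$-mass $\varepsilon$ only bounds the discarded contribution by $\varepsilon$, which must be compared with the leading term $\pk{\vk N=\operatorname{essinf}(\vk N)}\,\pk{\cdot\mid\operatorname{essinf}(\vk N)}$, itself tending to $0$ as $u\to\infty$; without a uniform-in-$\vk m$ comparison of the conditional exceedance probabilities with the leading one, small mass gives nothing.

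The missing idea, and the paper's key step, is a coupling/monotonicity observation: if $\vk m_1\geq\vk m_2$ componentwise, the tree with offspring numbers $\vk m_2$ is obtained from the one with $\vk m_1$ by deleting branches, so
$\pk{\exists\,t\in[0,T]\ \forall\gamma\colon B_\gamma(t)-ct>u\mid\vk N=\vk m_1}\leq\pk{\exists\,t\in[0,T]\ \forall\gamma\colon B_\gamma(t)-ct>u\mid\vk N=\vk m_2}$.
With this, writing $\vk N^*=\operatorname{essinf}(\vk N)$, the event $\{\vk N\neq\vk N^*\}$ is covered by $\bigcup_{i=1}^\eta\{\vk N\geq\vk N^*+\vk e_i\}$ and on each piece the conditional probability is dominated by the one for the single configuration $\vk N^*+\vk e_i$; the whole tail is thus bounded by a sum of only $\eta$ terms, each of which is $o$ of the leading term by \netheo{main} and \neprop{w(t)_asympt} (your rate comparison). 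This removes every summability and uniformity issue and uses no assumption on the laws of the $N_i$ beyond independence (which also guarantees $\pk{\vk N=\vk N^*}>0$). To make your argument complete you should state and justify this monotonicity and replace the dominated-convergence/truncation step by the finite union bound above.
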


The results obtained in~\netheo{main} and~\neprop{w(t)_asympt} allow us to
derive the asymptotics for the conditional \kk{first time of simultaneous exceedance of all branches}
\begin{equation*}
		\mathcal{T}(u):=\inf\{t\in[0,T] \colon \forall \, \gamma\in\Gamma \colon B_\gamma(t)-ct>u\}.
	\end{equation*}

\begin{korr}\label{ruintime}
%	Let \( \mathcal{T} ( u ) \) denote the moment of first high exceedance:
%
	Then for any $x,y\in(0,+\infty)$ such that $x>y$, holds
	\begin{equation*}
		\lim_{u \to \infty}
		\pk{u^2(T-\mathcal{T}(u)) \geq x \mid \mathcal{T}(u) \leq T - y/u^2}
		= \exp\left(-\frac{(x-y)P_\eta}{2\mu_0^2(T)}\right).
	\end{equation*}
\end{korr}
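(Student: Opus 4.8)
The plan is to reduce the conditional limit to a ratio of two unconditional asymptotics, both of which are already available in the excerpt. Write $p(u,y) \coloneqq \pk{\mathcal{T}(u) \le T - y/u^2}$, so that
\[
\pk{u^2(T-\mathcal{T}(u)) \ge x \mid \mathcal{T}(u) \le T - y/u^2}
= \frac{\pk{\mathcal{T}(u) \le T - x/u^2}}{\pk{\mathcal{T}(u) \le T - y/u^2}}
= \frac{p(u,x)}{p(u,y)}.
\]
Thus it suffices to obtain the exact asymptotics of $p(u,s)$ for an arbitrary fixed $s \in (0,\infty)$ and then divide. The key observation is that
\[
\pk{\mathcal{T}(u) \le T - s/u^2}
= \pk{\exists \, t \in [0, T - s/u^2] \ \forall \, \gamma \in \Gamma \colon B_\gamma(t) - ct > u},
\]
which is exactly the probability analysed in \netheo{main}, but with time horizon $T_u \coloneqq T - s/u^2$ in place of $T$. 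Since $T_u \to T$ as $u \to \infty$, the remark following \netheo{main} (and the remark following \neprop{w(t)_asympt}) guarantees that the asymptotic equivalence of \netheo{main} persists with $T$ replaced by $T_u$, so that
\[
p(u,s) \sim \mathcal{H}_{P_\eta, 1/\mu_0(T_u)} \, \pk{\forall \, \gamma \in \Gamma \colon B_\gamma(T_u) - c T_u > u}.
\]

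Next I would plug in the explicit asymptotics of the right-hand factor from \neprop{w(t)_asympt}, again with $T$ replaced by $T_u$, and then do a careful first-order expansion in $s/u^2$. The Pickands-type constant $\mathcal{H}_{P_\eta, 1/\mu_0(T_u)} \to \mathcal{H}_{P_\eta, 1/\mu_0(T)}$ because $\mu_0$ is continuous in its argument (it is $T - \tau_\eta$ plus a fixed constant on the last interval), and $\mathcal{H}_{\mathcal N, \lambda}$ depends continuously on $\lambda$; both the polynomial prefactor $u^{-P_\eta}$ and the constant part of the exponent and prefactor in \neprop{w(t)_asympt} converge to their values at $T$. Hence, in the ratio $p(u,x)/p(u,y)$, everything cancels except the dominant exponential term: we are left with
\[
\frac{p(u,x)}{p(u,y)} \sim \exp\!\left( -\frac{(u + c T_u^{(x)})^2 P_\eta}{2 \mu_0(T_u^{(x)})} + \frac{(u + c T_u^{(y)})^2 P_\eta}{2 \mu_0(T_u^{(y)})} \right),
\]
where $T_u^{(x)} = T - x/u^2$ and $T_u^{(y)} = T - y/u^2$. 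Expanding this exponent to leading order as $u \to \infty$ is then a routine Taylor computation: writing $f(t) = (u+ct)^2 P_\eta / (2\mu_0(t))$ and noting $T_u^{(x)} - T_u^{(y)} = (y-x)/u^2$, the difference $f(T_u^{(x)}) - f(T_u^{(y)}) \approx f'(T) \cdot (y-x)/u^2$, and since $\mu_0'(T) = 1$ on the last interval one checks that $f'(T) \sim u^2 P_\eta / (2\mu_0^2(T))$, which produces the stated limit $\exp\bigl(-(x-y)P_\eta / (2\mu_0^2(T))\bigr)$.

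The main obstacle I anticipate is \emph{not} the algebra but the justification that \netheo{main} may be applied with a horizon $T_u$ that shifts with $u$ at scale $u^{-2}$ --- one must check that the error terms in the proof of \netheo{main} are uniform over $T_u$ in a neighbourhood of $T$, so that the $\sim$ there does not hide a $u$-dependent constant that fails to cancel. This is precisely what the remarks after \neprop{w(t)_asympt} and \netheo{main} assert, so I would cite those; if a self-contained argument were required, one would re-run the proof of \netheo{main} tracking the dependence on the horizon and verify that the double limit (in $u$ and in the horizon) behaves well, using monotonicity of the exceedance probability in the horizon to sandwich. A secondary point is the care needed to ensure $p(u,y) > 0$ for all large $u$ so that the conditional probability is well-defined; this follows from the lower bound \eqref{low_all} applied at horizon $T_u^{(y)}$, which is strictly positive.
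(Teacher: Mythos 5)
Your proposal follows the paper's own proof essentially verbatim: rewrite the conditional probability as the ratio $\pk{\mathcal{T}(u)\leq T-xu^{-2}}/\pk{\mathcal{T}(u)\leq T-yu^{-2}}$, apply \netheo{main} at the $u$-dependent horizons $T-xu^{-2}$ and $T-yu^{-2}$ (justified by the remarks allowing $T_u\to T$), cancel the constants and prefactors coming from \neprop{w(t)_asympt}, and obtain the limit from a first-order expansion of the exponent, exactly as the paper does. The only slip is the sign in your Taylor step: with $f(t)=(u+ct)^2P_\eta/(2\mu_0(t))$ and $\mu_0'(T)=1$ one has $f'(T)\sim -u^2P_\eta/(2\mu_0^2(T))$ (the dominant term comes from differentiating $1/\mu_0(t)$), and with this sign your computation reproduces the paper's exponent difference and the stated limit $\exp\bigl(-(x-y)P_\eta/(2\mu_0^2(T))\bigr)$.
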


\begin{example}[Binary tree]
	\kk{Suppose that the difference between branching points equals one
	(i.e. $\tau_{i}-\tau_{i-1}=1$ for all $i\in\N$) and each process always splits into
	two (i.e. $N_i=2$ for all $i\in\N$). For the sake of simplicity, let \( T \) be integer.
%In this case we have that $i(t)=\lfloor t \rfloor$,
%	where $\lfloor T \rfloor$ is the nearest integer less than or equal to $T$, and
Then
%\begin{align*}
%	\mu_v(T)=2\left(2^{T-v}-1\right).
%\end{align*}
%Then,
\begin{align*}
	\mu_v(T)=2^{T-v}-1.
\end{align*}
}
Combining \netheo{main} and \neprop{w(t)_asympt} we obtain that as \( u \to \infty \)
\begin{eqnarray*}
	\pk{\exists \, t\in[0,T] \ \forall\gamma\in\Gamma  \colon B_\gamma(t)-ct>u}
	\sim
		\frac{\mathcal{H}_{2^{T-1},1/(2^{T}-1)}(2^T-1)^{2^{T-1}-1/2}}{(2\pi)^{2^{T-2}}
	\prod_{v=1}^{T-1}(2^{T-v}-1)^{2^{v-2}}}
u^{-2^{T-1}}	
\exp\left(-\frac{2^{T-2}}{2^{T}-1}(u+cT)^2\right).
\end{eqnarray*}
\end{example}

\subsection{Brownian decision forests}
\label{sec:forest}
In this section we shall consider a set of $M\in\N$ independent Brownian decision
trees with drift. That is, for \( i = 1, \ldots, M \) we take a triple
\( ( \vk{\tau}_i, \vk{N}_i, c ) \), where \( \vk{\tau}_i \) and \( \vk{N}_i \) are two
vectors and \( c_i \) is a constant, associate to each of them a Brownian
decision tree \( \vk{B}_{\Gamma_i} \) as described in Section~\ref{sec:definitions} (such that all of them
\kk{are} independent), and set
\begin{equation*}
	\vk{W}_i ( t )
	\coloneqq
	\vk{B}_{\Gamma_i} ( t ) + ( x_i - c_i \, t ) \, \vk{1}_{\eta_i},
\end{equation*}
where \( x_i \)'s are interpreted as the starting points of these trees. Each
tree \( \vk{W}_i ( t ) \) is thus uniquely defined by its tuple
$(\vk\tau_i,\vk N_i, c_i, x_i)$. Let us define a partial order relation $\succcurlyeq$ on the
set of trees as follows: we write
\( (\vk\tau_1,\vk N_1,c_1,x_1)\succcurlyeq(\vk\tau_2,\vk N_2,c_2,x_2) \) if one of the following
three conditions holds:
\begin{equation*}
	\begin{aligned}
		\text{(i)} \qquad
		&
			\frac{\mu_{0,1}(T)}{P_{\eta_1}}
			> \frac{\mu_{0,2}(T)}{P_{\eta_2}} \, ,
		\\[7pt]
		\text{(ii)} \qquad
		&
			\frac{\mu_{0,1}(T)}{P_{\eta_1}}
			= \frac{\mu_{0,2}(T)}{P_{\eta_2}}
			\quad \text{and} \quad
			c_1 T-x_1 < c_2 T-x_2 \, ,
		\\[9pt]
		\text{(iii)} \qquad
		&
			\frac{\mu_{0,1}(T)}{P_{\eta_1}}
			= \frac{\mu_{0,2}(T)}{P_{\eta_2}}
			\quad \text{and} \quad
			c_1 T-x_1 = c_2 T-x_2
			\quad \text{and} \quad
			P_{\eta_1} \leq P_{\eta_2}.
	\end{aligned}
\end{equation*}
In other words, it is the lexicographic order on the tuples
\( ( \mu_0 ( T ) / P_{\eta}, \, x - c T, \, -P_{\eta} ) \). One may notice that this
order is full. Next, let
\begin{equation*}
	(\vk\tau_1,\vk N_1,c_1,x_1)\approx(\vk\tau_2,\vk N_2,c_2,x_2)
	\iff
	\begin{cases}
		(\vk\tau_1,\vk N_1,c_1,x_1) \preccurlyeq (\vk\tau_2,\vk N_2,c_2,x_2),
		\\
		(\vk\tau_1,\vk N_1,c_1,x_1) \succcurlyeq (\vk\tau_2,\vk N_2,c_2,x_2)
	\end{cases}
\end{equation*}
and
\begin{equation*}
	(\vk\tau_1,\vk N_1,c_1,x_1) \succ (\vk\tau_2,\vk N_2,c_2,x_2)
	\iff
	\begin{cases}
		(\vk\tau_1,\vk N_1,c_1,x_1) \succcurlyeq (\vk\tau_2,\vk N_2,c_2,x_2),
		\\
		(\vk\tau_1,\vk N_1,c_1,x_1)\not\approx(\vk\tau_2,\vk N_2,c_2,x_2).
	\end{cases}
\end{equation*}
Combining \netheo{main} and \neprop{w(t)_asympt} we straightforwardly obtain the
following result.
\begin{lem}\label{tree_compare}
	The following equivalences hold:
	\begin{align*}
		&(\vk\tau_i,\vk N_i,c_i,x_i)\succ(\vk\tau_j,\vk N_j,c_j,x_j) \iff \lim_{u\to\infty}\frac{\pk{\exists t\in[0,T]:~\vk W_i(t)>u\vk 1_{\eta_i}}}{\pk{\exists t\in[0,T]:~\vk W_j(t)>u\vk 1_{\eta_j}}}=\infty,\\
		&(\vk\tau_i,\vk N_i,c_i,x_i)\prec(\vk\tau_j,\vk N_j,c_j,x_j) \iff \lim_{u\to\infty}\frac{\pk{\exists t\in[0,T]:~\vk W_i(t)>u\vk 1_{\eta_i}}}{\pk{\exists t\in[0,T]:~\vk W_j(t)>u\vk 1_{\eta_j}}}=0,\\
		&(\vk\tau_i,\vk N_i,c_i,x_i)\approx(\vk\tau_j,\vk N_j,c_j,x_j) \iff \lim_{u\to\infty}\frac{\pk{\exists t\in[0,T]:~\vk W_i(t)>u\vk 1_{\eta_i}}}{\pk{\exists t\in[0,T]:~\vk W_j(t)>u\vk 1_{\eta_j}}}\in(0,\infty).
	\end{align*}
\end{lem}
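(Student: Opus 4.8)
The plan is to combine \netheo{main} with \neprop{w(t)_asympt} to extract the exact first-order asymptotics of each \( \pk{\exists \, t\in[0,T]:~\vk W_i(t)>u\vk 1_{\eta_i}} \), and then to read off the three regimes of the ratio directly from the exponents. Writing \( \vk W_i(t) = \vk B_{\Gamma_i}(t) + (x_i - c_i t)\vk 1_{\eta_i} \), a shift of the barrier gives
\begin{align*}
	\pk{\exists \, t\in[0,T]:~\vk W_i(t)>u\vk 1_{\eta_i}}
	= \pk{\exists \, t\in[0,T] \ \forall \, \gamma\in\Gamma_i \colon B_\gamma(t)-c_i t > u - x_i}.
\end{align*}
Since \( u - x_i \to \infty \), applying \netheo{main} with barrier \( u-x_i \) and drift \( c_i \), and then \neprop{w(t)_asympt} (together with \( (u-x_i)^{-P_{\eta_i}} \sim u^{-P_{\eta_i}} \)), yields, as \( u\to\infty \),
\begin{align*}
	\pk{\exists \, t\in[0,T]:~\vk W_i(t)>u\vk 1_{\eta_i}}
	\sim K_i \, u^{-P_{\eta_i}} \exp\left( -\frac{P_{\eta_i}}{2\mu_{0,i}(T)}\big( u + c_i T - x_i \big)^2 \right),
\end{align*}
where \( K_i \in (0,\infty) \) is the explicit constant obtained by multiplying \( \mathcal H_{P_{\eta_i},\,1/\mu_{0,i}(T)} \) by the polynomial prefactor of \neprop{w(t)_asympt}.

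Next I would form the ratio of two such expressions for trees \( i \) and \( j \) and expand the exponent as a quadratic polynomial in \( u \):
\begin{align*}
	\frac{\pk{\exists \, t\in[0,T]:~\vk W_i(t)>u\vk 1_{\eta_i}}}{\pk{\exists \, t\in[0,T]:~\vk W_j(t)>u\vk 1_{\eta_j}}}
	\sim \frac{K_i}{K_j}\, u^{P_{\eta_j}-P_{\eta_i}}\,
	\exp\left( -\tfrac12 \big( a_i - a_j \big) u^2 - \big( a_i b_i - a_j b_j \big) u - \tfrac12\big( a_i b_i^2 - a_j b_j^2 \big) \right),
\end{align*}
where \( a_i \coloneqq P_{\eta_i}/\mu_{0,i}(T) \) and \( b_i \coloneqq c_i T - x_i \). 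The conclusion then follows from a three-case analysis mirroring the lexicographic definition of \( \succcurlyeq \). If \( a_i \neq a_j \) the \( u^2 \) term dominates, so the ratio tends to \( \infty \) when \( a_i < a_j \) (condition (i): \( \mu_{0,i}(T)/P_{\eta_i} > \mu_{0,j}(T)/P_{\eta_j} \)) and to \( 0 \) when \( a_i > a_j \). If \( a_i = a_j \) the quadratic term vanishes and the linear term \( -a_i(b_i - b_j)u \) dominates, giving \( \infty \) when \( b_i < b_j \) (condition (ii)) and \( 0 \) when \( b_i > b_j \). Finally, if \( a_i = a_j \) and \( b_i = b_j \), then \( a_i b_i^2 = a_j b_j^2 \) as well, so the whole exponent cancels and the ratio is asymptotically \( \tfrac{K_i}{K_j}u^{P_{\eta_j}-P_{\eta_i}} \), which tends to \( \infty \), to a positive finite limit, or to \( 0 \) according as \( P_{\eta_i} < P_{\eta_j} \), \( P_{\eta_i} = P_{\eta_j} \), or \( P_{\eta_i} > P_{\eta_j} \). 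Matching these outcomes with (i)--(iii), and recalling that \( \succ \) excludes \( \approx \) while the order is total, gives exactly the three stated equivalences.

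The argument is essentially bookkeeping once \netheo{main} and \neprop{w(t)_asympt} are available; the only points needing a little care are that the barrier shift \( u \mapsto u - x_i \) preserves the applicability of \netheo{main} — which it does, since the shifted barrier still diverges and the drift \( c_i \) is a fixed constant — and that in the borderline case \( a_i = a_j \), \( b_i = b_j \) all of the \( u^2 \), \( u^1 \) and \( u^0 \) contributions to the exponent cancel simultaneously, so that the polynomial prefactor is the decisive term. I do not expect a genuine obstacle beyond these verifications.
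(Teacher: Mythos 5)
Your proposal is correct and follows exactly the route the paper intends: it combines \netheo{main} with \neprop{w(t)_asympt} (after the barrier shift $u\mapsto u-x_i$) and reads the three regimes of the ratio off the lexicographic comparison of $(P_{\eta}/\mu_0(T),\,cT-x,\,P_{\eta})$, which is precisely the ``straightforward'' combination the paper invokes without writing out the details. The case analysis, including the observation that the constant-order terms cancel simultaneously when the quadratic and linear ones do, is the correct bookkeeping.
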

\kd{Lemma \ref{tree_compare} allows us to} find the asymptotics for the probability that all
branches of at least one of our $M$ trees exceed the threshold $u$.

\begin{theo}\label{forest_ruin}
	Let us define by $A$ the set of indices $i\in\{1,\ldots,M\}$ for which the
	corresponding set $(\vk \tau_i,\vk N_i, c_i,x_i)$ is the maximal among all given.
	Then, as $u\to\infty$,
	\begin{align*}
		\pk{\exists \, i\in\{1,\ldots,M\}, t\in[0,T] \colon \vk W_i(t)>u\vk 1_{\eta_i}}
		\sim \sum_{i\in A}\pk{\exists \, t\in[0,T] \colon \vk W_i(t)>u\vk 1_{\eta_i}},
	\end{align*}
	where the asymptotics of each term is given in \netheo{main}.
\end{theo}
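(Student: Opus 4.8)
The plan is to reduce the union probability to the sum of individual probabilities by an inclusion–exclusion argument combined with the ordering provided by \nelem{tree_compare}. First I would write
\[
	\pk{\exists \, i, t \colon \vk W_i(t) > u \vk 1_{\eta_i}}
	\le
	\sum_{i=1}^M \pk{\exists \, t \colon \vk W_i(t) > u \vk 1_{\eta_i}},
\]
which is the trivial union bound, and observe that by \nelem{tree_compare} every term with $i \notin A$ is $o\big(\pk{\exists \, t \colon \vk W_{i_0}(t) > u \vk 1_{\eta_{i_0}}}\big)$ for any fixed $i_0 \in A$, since a maximal tuple strictly dominates every non-maximal one. Hence the right-hand side is asymptotically $\sum_{i \in A}\pk{\exists \, t \colon \vk W_i(t) > u \vk 1_{\eta_i}}$, giving the upper bound. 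Moreover, \nelem{tree_compare} guarantees that all terms with $i \in A$ are of the \emph{same} asymptotic order (they satisfy $\approx$ pairwise), so no term in the sum over $A$ is negligible relative to another.

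For the matching lower bound I would use Bonferroni:
\[
	\pk{\exists \, i, t \colon \vk W_i(t) > u \vk 1_{\eta_i}}
	\ge
	\sum_{i \in A} \pk{\exists \, t \colon \vk W_i(t) > u \vk 1_{\eta_i}}
	- \sum_{\substack{i, j \in A \\ i < j}} \pk{\exists \, t \colon \vk W_i(t) > u \vk 1_{\eta_i}, \ \exists \, s \colon \vk W_j(s) > u \vk 1_{\eta_j}},
\]
where I have also dropped the contribution of indices outside $A$ (which only decreases the left side further, but these are negligible anyway). Since the trees $\vk W_i$ and $\vk W_j$ are independent for $i \ne j$, each pairwise intersection term factorizes as the product
\[
	\pk{\exists \, t \colon \vk W_i(t) > u \vk 1_{\eta_i}} \cdot \pk{\exists \, s \colon \vk W_j(s) > u \vk 1_{\eta_j}},
\]
and by \netheo{main} together with \neprop{w(t)_asympt} each factor decays like a constant times $u^{-P_{\eta_i}}$ times a Gaussian exponential $\exp\big(-(u + c_i T - x_i)^2 P_{\eta_i} / (2\mu_{0,i}(T))\big)$ (after absorbing $x_i$ into the drift in the obvious way). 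Therefore the product is, up to polynomial-in-$u$ factors, the square of a single term, hence $o$ of a single term; so the double sum is negligible compared with $\sum_{i \in A}$, and the lower bound matches the upper bound.

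The main obstacle is purely bookkeeping: one must check that \netheo{main} applies verbatim to $\vk W_i(t) = \vk B_{\Gamma_i}(t) + (x_i - c_i t)\vk 1_{\eta_i}$, i.e. that the shift by the constant $x_i$ simply replaces the effective threshold $u$ by $u - x_i$ and that this does not alter the asymptotic equivalence — this follows because \netheo{main} is stable under $T = T_u \to T$ and under $O(1)$ perturbations of $u$, as recorded in the remark after \netheo{main} and in the remark after \neprop{w(t)_asympt} (the constant $\mathcal H_{P_\eta, 1/\mu_0(T)}$ and the polynomial prefactor are unaffected, only the exponential picks up the shift). Once that is in place, the exponential-rate comparison that underlies \nelem{tree_compare} does all the work: the rate for tree $i$ is governed by $\mu_{0,i}(T)/P_{\eta_i}$ and then by $x_i - c_i T$, which is exactly the lexicographic order defining $\succcurlyeq$, so membership in $A$ is precisely the condition for a tree to contribute at the leading order. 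The only mild subtlety is confirming that ties within $A$ (the $\approx$ class) really do produce comparable constants and not merely comparable exponential rates — but this is exactly the content of the third equivalence in \nelem{tree_compare}, so it may be invoked directly.
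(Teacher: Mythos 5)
Your proposal is correct and follows essentially the same route as the paper's proof: a Bonferroni/union-bound argument, factorization of the pairwise terms by independence of the trees (each product being $o$ of a single term since the probabilities vanish), and Lemma \ref{tree_compare} to discard the non-maximal indices. The only difference is cosmetic ordering --- the paper applies Bonferroni to all $M$ trees and then drops $i\notin A$, while you drop them first and apply Bonferroni within $A$ --- which changes nothing of substance.
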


\section{Proofs}
\label{sec:proofs}

\subsection{Proof of Proposition~\ref{classical}}
	Let $\tau \sim \operatorname{Exp} ( 1 )$ be the first branching moment. Define by
	$\mathfrak{d}$ the dimension of vector $\vk B(t)$ (i.e. the amount of branches at the
	point $t$), and $\vk 1(t)=(1,\ldots,1)\in\R^{\mathfrak{d}}$. Then,
	\begin{align*}
		&
			\pk{\exists \, t\in[0,T] \colon \vk B(t)-ct\vk 1(t)>u\vk 1(t), \  \tau\geq T}
		\\[7pt]
		& \hspace{50pt}
			\leq \pk{\exists \, t\in[0,T] \colon \vk B(t)-ct\vk 1(t)>u\vk 1(t)}
		\\[7pt]
		& \hspace{50pt}
			\leq \pk{\exists \, t\in[0,T] \colon \vk B(t)-ct\vk 1(t)>u\vk 1(t), \  \tau\leq T-2u^{-1}}
		\\[7pt]
		& \hspace{80pt}
			+\pk{\exists \, t\in[0,T] \colon \vk B(t)-ct\vk 1(t)>u\vk 1(t), \ \tau\in[T-2u^{-1},T]}
		\\[7pt]
		& \hspace{80pt}
			+\pk{\exists t \, \in [0,T] \colon \vk B(t)-ct\vk 1(t)>u\vk 1(t), \ \tau\geq T}
		\\[7pt]
		& \hspace{50pt}
			=: S_1+S_2+S_3
	\end{align*}
	Consider each term separately. For $S_3$, using that $ (\vk B\mid \tau\geq T )$ for
	$t\in[0,T]$ is a Brownian motion independent of $\tau$, we have
	\begin{align*}
		S_3
		&=\pk{\tau\geq T} \, \pk{\exists \, t\in[0,T] \colon \vk B (t)-ct\vk 1(t)>u\vk 1(t) \mid \tau\geq T}
		\\[7pt]
		&\sim
			e^{-T} \sqrt{\frac{2 T}{\pi}} \,
			u^{-1} \exp\left(-\frac{(u+cT)^2}{2T}\right)
	\end{align*}
	as $u\to\infty$. Considering $S_2$, again using that $B_1$ is Brownian motion independent of $\tau$, as $u\to\infty$
	\begin{align*}
		S_2
		&\leq \pk{\tau\in[T-2u^{-1},T]} \, \pk{\exists \, t\in[0,T] \colon B_1(t)-ct>u}
		\\[7pt]
		&\sim
			(e^{2u^{-1}}-1) \,
			e^{-T} \, \sqrt{\frac{2 T}{\pi}} \, u^{-1}
			\exp\left(-\frac{(u+cT)^2}{2T}\right),
	\end{align*}
	implies that
	\begin{equation*}
		\lim_{u \to \infty}
		\frac{S_2}{S_3}\to 0.
	\end{equation*}
	Then, for $S_1$
	\begin{eqnarray*}
		S_1
		&\leq&
        \pk{\exists \, t\in[0,T-u^{-1}] \colon B_1(t)-ct>u, \ \tau\leq T-2u^{-1}}
		\\
		&&+\pk{\exists \, t\in[T-u^{-1},T] \colon \frac{B_1(t)+B_2(t)}{2}-ct>u, \ \tau\leq T-2u^{-1}}
		\eqqcolon Z_1+Z_2.
	\end{eqnarray*}
	For $Z_1$, using that
	\begin{equation*}
		Z_1
		\sim
		( 1 - e^{-T} ) \,
		\sqrt{\frac{2 T}{\pi}} \,
		u^{-1}
		\exp\left(-\frac{(u+c(T-u^{-1}))^2}{2(T-u^{-1})}\right),
	\end{equation*}
	we obtain
	\begin{eqnarray*}
		\lim_{u \to \infty}
		\frac{Z_1}{S_3}
		&=& \lim_{u \to \infty}
		\frac{1-e^{-T}}{e^{-T}}
		\exp\left(\frac{(u+cT)^2}{2T}-\frac{(u+c(T-u^{-1}))^2}{2(T-u^{-1})}\right)
		\\
		&=& \lim_{u \to \infty}
		\frac{1-e^{-T}}{e^{-T}}
		\exp\left(\frac{-u^{-1}(u+cT)^2-4cT^2+2Tc^{2}u^{-2}}{2T(T-u^{-1})}\right)
		= 0.
	\end{eqnarray*}
	Finally,
	\begin{eqnarray*}
		Z_2
		&=&\E{
			\mathbb{I}_{\{\tau<T-2u^{-1}\}} \,
			\pk{\exists \, t\in[T-u^{-1},T] \colon \frac{B_1(t)+B_2(t)}{2}-ct>u \  \middle| \  \tau}
		}
		\\
		&\leq&\sup_{\mathfrak{t}\in[0,T-2u^{-1}]}
		\pk{\exists \, t\in[T-u^{-1},T] \colon \frac{B_1(t)+B_2(t)}{2}>u-\abs{c}T \ \middle| \  \tau=\mathfrak{t}}.
	\end{eqnarray*}
	Since $( B_1(t)+B_2(t)\mid\tau=\mathfrak{t} )$ is a Gaussian process and for any $\mathfrak{t}\in[0,T-u^{-1}]$
	\begin{equation*}
		\Var\left(\frac{B_1(t)+B_2(t)}{2} \ \middle| \ \tau=\mathfrak{t}\right)
		=\begin{cases}
			 \frac{t+\mathfrak{t}}{2}, & t>\mathfrak{t},\\
			 t, & t\leq \mathfrak{t},
		 \end{cases}
	 \end{equation*}
	 we can apply Piterbarg inequality (see, e.g.,~\cite[Theorem 8.1]{Pit96})
	 obtaining that for $u>\abs{c}T$
	 \begin{eqnarray*}
		 \pk{\exists \, t\in[T-u^{-1},T] \colon \frac{B_1(t)+B_2(t)}{2}>u-\abs{c}T \ \middle| \ \tau=\mathfrak{t}}
		 \leq
		 C(u-\abs{c}T)^{\alpha}\exp\left(-\frac{(u-\abs{c}T)^2}{2\sigma_{\mathfrak{t}}}\right),
	 \end{eqnarray*}
	 where
	 \begin{equation*}
		 \sigma_{\mathfrak{t}}=\max_{t\in[0,T]}\Var\left(\frac{B_1(t)+B_2(t)}{2}\right)=\frac{T+\mathfrak{t}}{2}
	 \end{equation*}
	 and $C>0, \, \alpha\in\R$ are some constants. Using that $C, \, \alpha$ does not depend ot
	 $\mathfrak{t}$, we get
	 \begin{equation*}
		 Z_2\leq Cu^{\alpha} \sup_{\mathfrak{t}\in[0,T-2u^{-1}]}
		 \exp\left(-\frac{(u-\abs{c}T)^2}{2\sigma_{\mathfrak{t}}}\right)
		 = Cu^{\alpha}\exp\left(-\frac{(u-\abs{c}T)^2}{2(T-u^{-1})}\right).
	 \end{equation*}
	 It remains to note that \( Z_2/S_3\to 0 \) as \( u \to \infty \) by the same reason as
	 $Z_1/S_3$.
\QED

\subsection{Proof ot Theorem~\ref{single_ruin}}
%Using the Bonferroni inequality, we find that
\kk{We begin with the observation that
\begin{eqnarray*}
	\pk{\exists \, t\in[0,T], \gamma\in\Gamma \colon B_\gamma(t)-ct>u}
&\geq&
	\sum_{\gamma\in\Gamma} \pk{\exists \, t\in[0,T] \colon B_\gamma(t)-ct>u}\\
	&&-\sum_{\substack{\gamma_1,\gamma_2\in\Gamma\\ \gamma_1\not=\gamma_2}}
	\pk{
		\exists \, t\in[0,T] \colon
		\begin{aligned}
			B_{\gamma_1}(t)-ct>u, \\[3pt]
			B_{\gamma_2}(t)-ct>u
		\end{aligned}
	}
\end{eqnarray*}
and
\begin{equation*}
	\pk{\exists \, t\in[0,T], \gamma\in\Gamma \colon B_\gamma(t)-ct>u}
	\leq
	\sum_{\gamma\in\Gamma}\pk{\exists \, t\in[0,T] \colon B_\gamma(t)-ct>u},
\end{equation*}
}
where branch $B_\gamma$ is a Brownian motion. By~\cite[Formula 1.1.4 and Appendix 2,
Section 8]{borodin2015handbook}, we have
\begin{equation}\label{eq:5}
	\pk{\exists \, t\in[0,T] \colon B_\gamma(t)-ct>u}
	\sim \frac{2\sqrt{T}}{\sqrt{2\pi}(u+cT)} \,
	\exp \left( -\frac{(u+cT)^2}{2T} \right)
\end{equation}
as $u\to\infty$. For the double sum on the left hand side, we have the following upper
bound
\begin{equation*}
	\pk{
		\exists \, t\in[0,T] \colon
		\begin{aligned}
			B_{\gamma_1}(t)-ct>u, \\[3pt]
			B_{\gamma_2}(t)-ct>u
		\end{aligned}
	}
	\leq C \, \pk{
		\begin{aligned}
			B_{\gamma_1}(T)-cT>u, \\[3pt]
			B_{\gamma_2}(T)-cT>u
		\end{aligned}
	}.
\end{equation*}
Here $C$ is some positive constant independent of $u$; see ~\cite[Theorem 3.1]{MR4569300}.
Since $(B_{\gamma_1}(T),B_{\gamma_2}(T))$ is a Gaussian vector with
covariance matrix
\begin{align*}
	\Sigma=\begin{pmatrix} T & \tau_{\kappa(\gamma_1,\gamma_2)}\\
		\tau_{\kappa(\gamma_1,\gamma_2)} & T
	\end{pmatrix}
\end{align*}
we have for some positive constant $C_2>0$ as \( u \to \infty \)
\begin{align*}
	\pk{B_{\gamma_1}(T)+cT>u, \, B_{\gamma_2}(T)-cT>u}
	\sim C_2 \, u^{-2} \,
	\exp \left( -\frac{(u+cT)^2}{2 \left(T+\tau_{\kappa(\gamma_1,\gamma_2)}\right)} \right).
\end{align*}
Using that $\tau_{\kappa(\gamma_1,\gamma_2)}<T$ for $\gamma_1\not=\gamma_2$, we obtain by~\eqref{eq:5}
\begin{equation*}
	\pk{
		\exists \, t\in[0,T] \colon
		\begin{aligned}
			B_{\gamma_1}(t)-ct>u, \\
			B_{\gamma_2}(t)-ct>u
		\end{aligned}
	}
	=o\left(\pk{\exists \, t\in[0,T] \colon B_\gamma(t)-ct>u}\right)
\end{equation*}
for any $\gamma,\gamma_1,\gamma_2\in\Gamma$, $\gamma_1\not=\gamma_2$, establishing the claim.
\QED

\subsection{Proof of Theorem~\ref{diameter_1}}
Note that
\begin{align*}
	& \sum_{\gamma_1,\gamma_2\in\Gamma} \pk{\exists \, t \in[0,T] \colon \,B_{\gamma_1}(t)-B_{\gamma_2}(t)>u}
  \\ & \hspace{30pt}
		\geq \pk{\exists \, t\in[0,T], \exists \, \gamma_1,\gamma_2\in\Gamma \colon \,B_{\gamma_1}(t)-B_{\gamma_2}(t)>u}
  \\[7pt] & \hspace{30pt}
		\geq \sum_{\gamma_1,\gamma_2\in\Gamma}\pk{\exists \, t\in[0,T] \colon \,B_{\gamma_1}(t)-B_{\gamma_2}(t)>u}
	\\ & \hspace{50pt}
	 -\sum_{\substack{\gamma_1,\gamma_2,\gamma_3,\gamma_4\in\Gamma\\ \{\gamma_1,\gamma_2\}\not=\{\gamma_3,\gamma_4\}}}
	\pk{
	\exists \, t,s\in[0,T] \colon
	B_{\gamma_1}(t)-B_{\gamma_2}(t)>u, B_{\gamma_3}(s)-B_{\gamma_4}(s)>u
	}.
\end{align*}
For any $\gamma_1\not=\gamma_2$, the process $B_{\gamma_1}-B_{\gamma_2}$ admits the following representation
\begin{equation*}
	B_{\gamma_1}(t)-B_{\gamma_2}(t)
	=\begin{cases}
		 0,\qquad &t\leq\tau_{\kappa(\gamma_1,\gamma_2)}, \\
		 B^*(2(t-\tau_{\kappa(\gamma_1,\gamma_2)}),\qquad &t>\tau_{\kappa(\gamma_1,\gamma_2)},
	\end{cases}
\end{equation*}
where $B^*(t)$ is Brownian motion. Hence, as $u\to\infty$,
\begin{eqnarray*}
	\pk{\exists \, t\in[0,T] \colon B_{\gamma_1}(t)-B_{\gamma_2}>u}
		&=&
\pk{\exists \, t\in[\tau_{\kappa(\gamma_1,\gamma_2)},T] \colon B^*(2(t-\tau_{\kappa(\gamma_1-\gamma_2)}))>u}
	\\
	&=&\pk{\exists \, t\in[0,2(T-\tau_{\kappa(\gamma_1,\gamma_2)})] \colon B^*(t)>u}\\
	&\sim& \frac{2\sqrt{2(T-\tau_{\kappa(\gamma_1,\gamma_2)})}}{u\sqrt{2\pi}}
	\exp\left(-\frac{u^2}{4(T-\tau_{\kappa(\gamma_1,\gamma_2)})}\right).
\end{eqnarray*}
For  $\gamma_1,\gamma_2,\gamma_3,\gamma_4\in\Gamma$, if $\kappa(\gamma_1,\gamma_2)<\kappa(\gamma_3,\gamma_4)$, then as $u\to\infty$
\begin{equation*}
	\frac{
		\pk{\exists \, t\in[0,T] \colon B_{\gamma_3}(t)-B_{\gamma_4}>u}
	}{
		\pk{\exists \, t\in[0,T] \colon B_{\gamma_1}(t)-B_{\gamma_2}>u}
	}\to 0.
\end{equation*}
The latter implies that
\begin{eqnarray*}
	\sum_{\gamma_1,\gamma_2\in\Gamma}
	\pk{\exists \, t\in[0,T] \colon B_{\gamma_1}(t)-B_{\gamma_2}(t)>u}
	&\sim& \sum_{\substack{\gamma_1,\gamma_2\in\Gamma\\\kappa(\gamma_1,\gamma_2)=1}}
	\frac{2\sqrt{2(T-\tau_{1})}}{u\sqrt{2\pi}}\exp\left(-\frac{u^2}{4(T-\tau_{1})}\right)
	\\
		&=& N_1(N_1-1)\left(\frac{P_\eta}{N_1}\right)^2
		\frac{2\sqrt{2(T-\tau_{1})}}{u\sqrt{2\pi}}
		\exp\left(-\frac{u^2}{4(T-\tau_{1})}\right).
\end{eqnarray*}
Next, consider the double events' probabilities. For  $\gamma_1,\gamma_2,\gamma_3,\gamma_4\in\Gamma$,
\begin{eqnarray*}
\lefteqn{	\pk{\exists \, t,s\in[0,T] \colon B_{\gamma_1}(t)-B_{\gamma_2}(t)>u, B_{\gamma_3}(s)-B_{\gamma_4}(s)>u}}\\
	&&\leq \pk{\exists \, t,s\in[0,T] \colon \frac{B_{\gamma_1}(t)-B_{\gamma_2}(t)+B_{\gamma_3}(s)-B_{\gamma_4}(s)}{2}>u}.
\end{eqnarray*}
Using that $(B_{\gamma_1}(t)-B_{\gamma_2}(t)+B_{\gamma_3}(s)-B_{\gamma_4}(s))/2$ is a Gaussian random field and
\begin{equation*}
	\Var\left(\frac{B_{\gamma_1}(t)-B_{\gamma_2}(t)+B_{\gamma_3}(s)-B_{\gamma_4}(s)}{2}\right)
	% =\frac{t+s-\min(t,\tau_{\kappa(\gamma_1,\gamma_2)})-\min(s,\tau_{\kappa(\gamma_3,\gamma_4)})}{2} \\
	% &\qquad+\frac{\min(t,s,\tau_{\kappa(\gamma_1,\gamma_3)})+\min(t,s,\tau_{\kappa(\gamma_2,\gamma_4)})
	% 	-\min(t,s,\tau_{\kappa(\gamma_1,\gamma_4)})-\min(t,s,\tau_{\kappa(\gamma_2	,\gamma_3)})}{2}\\
  \leq\frac{3T+\tau_{\eta} - 4\tau_1}{2},
\end{equation*}
by Piterbarg inequality (see, e.g.,~\cite[Theorem 8.1]{Pit96}) there exist some
constants $C>0, \ \alpha\in\R$
\begin{eqnarray*}
	\pk{\exists \, t,s\in[0,T] \colon
	B_{\gamma_1}(t)-B_{\gamma_2}(t)+B_{\gamma_3}(s)-B_{\gamma_4}(s) > 2 u}
	\leq Cu^{\alpha}\exp\left(-\frac{u^2}{3T+\tau_\eta-4\tau_1}\right).
\end{eqnarray*}
The above inequality implies that
\begin{equation*}
	\frac{
		\displaystyle
		\sum_{\substack{\gamma_1,\gamma_2,\gamma_3,\gamma_4\in\Gamma\\ \{\gamma_1,\gamma_2\}\not=\{\gamma_3,\gamma_4\}}}
		\pk{\exists \, t,s\in[0,T] \colon
			\begin{aligned}
				& B_{\gamma_1}(t)-B_{\gamma_2}(t)>u,
				\\[3pt]
				& B_{\gamma_3}(s)-B_{\gamma_4}(s)>u
			\end{aligned}
			}
		}{
			\displaystyle
			\sum_{\gamma_1,\gamma_2\in\Gamma}\pk{\exists t\in[0,T]:\,B_{\gamma_1}(t)-B_{\gamma_2}(t)>u}
		}\to 0
	\end{equation*}
	as $u\to\infty$, establishing the proof.
\QED

\subsection{Proofs of results on simultaneous high-exceedance probability of all branches}
	Until the end of this secion, let us define
	\begin{equation}
		\vk{W} ( t ) = \vk{B}_{\Gamma} ( t ) - \vk{c}  t ,
		\qquad
		\vk{c}  = c  \, \vk{1}_{\eta},
		\label{W_def}
	\end{equation}
	where $c\in\R$ is fixed. Then the the all-branch high exceedance probability can be stated as
	\begin{align*}
		\pk{\exists \, t\in[0,T], \, \forall \gamma\in\Gamma \colon \,B_\gamma(t)-ct>u}=\pk{\exists \, t\in[0,T] \colon \vk W(t)>u\vk 1_{\eta}}.
	\end{align*}

\begin{proof}[Proof of Proposition~\ref{Korsh}]
	As in the proof of~\cite[Theorem 1.1]{korshunov2020tail} define a stopping
	time by
\begin{align*}
	\mathcal{T} = \inf\{t \in [0,T] \colon \vk W(t)>u\vk 1_{\eta}\}
	=\inf\{t\in[0,T] \colon \forall\gamma\in\Gamma~B_\gamma(t)-ct>u\},
\end{align*}
using the convention $\inf\varnothing=T+1$ and write $F_{\mathcal{T}}$ for its
distribution function. Since the paths of $\vk W$ are continuous, $\mathcal{T}$ is
well-defined. By Proposition~\ref{ind_increments},
\begin{align*}
	\pk{\vk W(T)>u\vk 1_{\eta}}
	&=\int_0^{T}dF_{\mathcal{T}}(t) \, \pk{\vk W(T)>u\vk 1_{\eta} \mid \mathcal{T}=t}\\[7pt]
	&=\int_0^{T}dF_{\mathcal{T}}(t) \, \pk{\forall\gamma\in\Gamma \colon B_\gamma(T)-cT>u \mid \mathcal{T}=t}\\[7pt]
	&\geq\int_0^{T}dF_{\mathcal{T}}(t) \, \pk{\forall\gamma\in\Gamma \colon B_\gamma(T)-B_\gamma(t)>c(T-t)}.
\end{align*}
Consider the last probability. Define for $t\in[0,T_1]$ and $\gamma\in\Gamma$ a Gaussian
random variable $\overline{B}_{\gamma}(t)$ by
\begin{equation*}
	\overline{B}_{\gamma}(t)=\left(B_{\gamma}(T)-B_{\gamma}(t)\right).
\end{equation*}
Its variance is
\begin{equation*}
	\Var\left(\overline{B}_{\gamma}(t)\right)
  = \Var\left(B_{\gamma}(T)-B_{\gamma}(t)\right)=T_1-t,
\end{equation*}
and the covariance between them
\begin{equation*}
	\Cov \left(\overline{B}_{\gamma_1}(t), \overline{B}_{\gamma_2}(t)\right)
	= \min \{ T, \tau_{\kappa(\gamma_1,\gamma_2)} \}
	-\min \{ t, \tau_{\kappa(\gamma_1,\gamma_2)} \}
	\geq 0.
\end{equation*}
Hence, using Slepian inequality (see e.g., Theorem 2.2.1 in~\cite{AdlerTaylor})
we arrive at
\begin{eqnarray*}
	\pk{\forall \, \gamma\in\Gamma \colon \overline{B}_{\gamma}(t)>c(T_1-t)}
	\geq
	\prod_{i=1}^{P_\eta}\pk{\overline{B}_i(t)>\abs{c}(T_1-t)}
	=\left(\pk{\xi>\abs{c}\sqrt{T_1-t}}\right)^{P_\eta}
	\geq \left(\pk{\xi>\abs{c}}\right)^{P_\eta},
\end{eqnarray*}
where $\xi$ is a standard normal random variable. Hence, we can continue our
initial inequality as follows
\begin{align*}
	\pk{\vk W(T_1)>u\vk 1_{\eta}}
	&\geq\int_0^{T_1}dF_{\mathcal{T}}(t) \, \pk{\forall\gamma\in\Gamma \colon B_\gamma(T_1)-B_\gamma(t)>c(T_1-t)}
	\\[7pt]
	&\geq\left(\pk{\xi>\abs{c}}\right)^{P_\eta}\int_0^{T_1}dF_{\mathcal{T}}(t) \\[7pt]
	&=\left(\pk{\xi>\abs{c}}\right)^{P_\eta}\pk{\mathcal{T}\leq T_1} \\[7pt]
	&=\left(\pk{\xi>\abs{c}}\right)^{P_\eta}\pk{\exists \, t\in[0,T_1]:\vk W(t)>u\vk 1_{\eta}}
\end{align*}
establishing the claim.
\end{proof}
\kkk{Before proving Theorem \ref{main} we need to introduce some useful notation and
present some lemmas that are used in the proof. }

In the remainder we shall need some results concerning the so-called quadratic
programming problem: for a positive-definite $d\times d$ matrix $\Sigma$ and real-valued
vector $\vk a\in\R^d$
\begin{equation}
	\label{eq:QP}
	\Pi_\Sigma(\vk{a}) \colon \quad
	\text{minimise} \quad
	\vk{x}^\top \Sigma^{-1} \vk{x}
	\quad \text{under the linear constraint} \quad
	\vk{x} \ge \vk{a}.
\end{equation}
As shown in~\cite{Hag79}, \eqref{eq:QP} has a unique solution $\tilde{\vk a}$,
see Appendix, \nelem{lem:quadratic_programming}. We would also use further the
sets $I,J$ defined in \nelem{lem:quadratic_programming}. In particular,
$ \tilde{\vk a}_I= \vk a_I$ with some index set $I$. The components of
$\tilde{\vk a}$ with indices in $J=\{1 \ldot d\} \setminus I$ can be expressed in terms
of $\vk a_I$. The next result shows the solution of $\Pi_{\Sigma(T)}(\vk 1_{\eta})$ for
our model.
\begin{lem}\label{quadratic_solution_prop}
	The unique solution of $\Pi_{\Sigma(T)}(\vk 1_{\eta})$ satisfies
	\begin{align*}
		\tilde{\vk a}=\vk 1_{\eta},\qquad I=\{1,\ldots,P_{\eta}\},\qquad J=\varnothing.
	\end{align*}
\end{lem}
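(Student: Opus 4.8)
The plan is to verify that $\tilde{\vk a}=\vk 1_\eta$ solves the quadratic programming problem $\Pi_{\Sigma(T)}(\vk 1_\eta)$ by checking the Karush–Kuhn–Tucker conditions stated in \nelem{lem:quadratic_programming}. Concretely, it suffices to show that the candidate solution $\vk x=\vk 1_\eta$ is feasible (which is immediate, since $\vk 1_\eta\ge\vk 1_\eta$) and that the vector $\vk\lambda\coloneqq\Sigma(T)^{-1}\vk 1_\eta$ has all components nonnegative; the complementary slackness then holds trivially because all constraints are active. Once $\vk\lambda\ge\vk 0$ is established, uniqueness of the solution (guaranteed by~\cite{Hag79}) forces $\tilde{\vk a}=\vk 1_\eta$, hence $I=\{1,\ldots,P_\eta\}$ and $J=\varnothing$.

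Thus the whole proof reduces to showing $\Sigma(T)^{-1}\vk 1_\eta\ge\vk 0$, or equivalently that there is a nonnegative vector $\vk\lambda$ with $\Sigma(T)\vk\lambda=\vk 1_\eta$. Here I would invoke \neprop{eigenvalues}: the all-ones vector $\vk 1_\eta$ is an eigenvector of $\Sigma(T)$ with eigenvalue $\mu_0(T)$. Consequently $\Sigma(T)^{-1}\vk 1_\eta=\mu_0(T)^{-1}\vk 1_\eta$, and since $\mu_0(T)>0$ (as is clear from the formula~\eqref{mu_def}, being a sum of positive terms, or simply because $\Sigma(T)$ is positive definite), we get $\vk\lambda=\mu_0(T)^{-1}\vk 1_\eta>\vk 0$. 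This is in fact the cleanest route and makes the nonnegativity check essentially a one-liner.

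The remaining point is to confirm that this candidate is indeed \emph{the} solution in the sense of \nelem{lem:quadratic_programming}, i.e. that the lemma's characterization (feasibility, the sign condition on the multipliers, and the identification of $I$ as the set of active constraints whose multipliers may be positive) is satisfied with $I=\{1,\ldots,P_\eta\}$. Since every constraint is active and every multiplier $\mu_0(T)^{-1}$ is strictly positive, the index set $I$ from the lemma — which consists of indices where $\tilde{\vk a}_i=\vk a_i$ — is all of $\{1,\ldots,P_\eta\}$, and $J$ is empty. I would also remark that positive definiteness of $\Sigma(T)$ for $t=T$ is needed for~\eqref{eq:QP} to be well-posed; this follows from \neprop{eigenvalues} since all eigenvalues $\mu_v(T)$, $v=0,\ldots,\eta$, are strictly positive (the interval $(\tau_\eta,T]$ is nonempty so $T-\tau_{i(T)}>0$, making even $\mu_\eta(T)>0$).

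I do not expect any serious obstacle here: the main structural input — that $\vk 1_\eta$ is an eigenvector of $\Sigma(T)$ — has already been supplied by \neprop{eigenvalues}, and everything else is a routine application of the KKT characterization. The only place requiring a modicum of care is making sure the sign of the Lagrange multipliers is correctly oriented with the convention used in \nelem{lem:quadratic_programming} (whether the constraint is written $\vk x\ge\vk a$ with multipliers on $\Sigma^{-1}\tilde{\vk a}$ being nonnegative or nonpositive), but this is purely a matter of matching conventions.
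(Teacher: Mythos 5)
Your argument is correct, but it takes a genuinely different route from the paper's. You verify the candidate $\tilde{\vk a}=\vk 1_\eta$ directly through the characterization in \nelem{lem:quadratic_programming}: by \neprop{eigenvalues}, $\vk 1_\eta$ is an eigenvector of $\Sigma(T)$ with eigenvalue $\mu_0(T)>0$ (positive since $T>\tau_\eta$), so $\vk\lambda=\Sigma^{-1}(T)\vk 1_\eta=\mu_0(T)^{-1}\vk 1_\eta>\vk 0$; feasibility and complementary slackness are immediate because every constraint is active, convexity plus uniqueness give $\tilde{\vk a}=\vk 1_\eta$, and since $\vk\lambda$ has no zero component the requirement $\vk\lambda_J=\vk 0_J$ forces $J=\varnothing$, $I=\{1,\ldots,P_\eta\}$. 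The paper proceeds instead by symmetry: it constructs branch-relabeling permutations $\pi_{j,b,c}$ of $\Gamma$ that preserve the separation indices $\kappa(\gamma_1,\gamma_2)$ and hence the covariance matrix $\Sigma(T)$, observes that the index set $I$ of the quadratic programming problem is invariant under each such permutation, and uses transitivity of the generated group to conclude that $I$ is either empty (excluded by \nelem{lem:quadratic_programming}) or all of $\{1,\ldots,P_\eta\}$. Your route is shorter and yields the explicit multiplier $\vk\lambda=\mu_0(T)^{-1}\vk 1_\eta$, which the paper needs anyway (e.g.\ in \nelem{Pickands} and \nelem{Pit_constant}); the paper's route is purely combinatorial and would apply to any covariance invariant under a transitive relabeling group, without knowing the eigen-structure. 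If you want your verification to be fully self-contained (and to dispose of the sign-convention worry you raise), note that sufficiency of your stationarity check is a one-line computation: for any $\vk h\ge\vk 0$ one has $(\vk 1_\eta+\vk h)^\top\Sigma^{-1}(T)(\vk 1_\eta+\vk h)=\vk 1_\eta^\top\Sigma^{-1}(T)\vk 1_\eta+2\mu_0(T)^{-1}\vk 1_\eta^\top\vk h+\vk h^\top\Sigma^{-1}(T)\vk h\ge\vk 1_\eta^\top\Sigma^{-1}(T)\vk 1_\eta$, with equality only at $\vk h=\vk 0$, so $\vk 1_\eta$ is the unique minimiser.
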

The proof of \nelem{quadratic_solution_prop} is given in Appendix.

Let $\delta_u(L)=Lu^{-2}$ for $L>0$ and denote
into two
\begin{align*}
	M(u,L)& \coloneqq \pk{\exists \, t\in[T-\delta_u(L),T] \colon \vk W(t)>u\vk 1_{\eta}},\\
	m(u,L)& \coloneqq \pk{\exists \, t\in[0,T-\delta_u(L)] \colon \vk W(t)>u\vk 1_{\eta}}.
\end{align*}
Clearly,
\begin{align*}
	M(u,L) \leq \pk{\exists \, t\in[0,T] \colon \vk W(t)>u\vk 1_{\eta}}\leq M(u,L)+m(u,L).
\end{align*}
We will prove that $m(u,L)$ is negligible with respect to $M(u,L)$.

\begin{lem}\label{Pickands}
	For  fixed $L>0$, as $u\to\infty$
	\begin{align*}
		M(u,L)\sim H(L) \, \pk{\vk W(T)>u\vk 1_{\eta}},
	\end{align*}
	where $\vk\lambda=\Sigma^{-1}(T)\vk 1_\eta$, and the constant $H(L)$ is given by
\begin{equation}
	\label{H(L)}
	H(L) = e^{-\frac{L}{2}\vk \lambda^\top\vk\lambda}
	\int_{\R^{P_\eta}}
	\pk{\exists \, t\in[0,L] \colon \vk\lambda\vk B^*(t)>\vk x}
	\, e^{\sum_{i=1}^{P_\eta} x_i}
	\td\vk x,
\end{equation}
where $\vk B^*(t)\in\R^{P_\eta}$ is a Brownian motion with independent components.
\begin{remark}
	In view of \nelem{quadratic_solution_prop} and
	\nelem{lem:quadratic_programming} (see Appendix) it follows that $\vk\lambda>\vk 0$.
\end{remark}
\end{lem}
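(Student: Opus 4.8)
The plan is to develop the technique of~\cite{MR4127347}, anchoring the process not at $T$ but at the left end $T-\delta_u(L)$ of the window $[T-\delta_u(L),T]$. The point is that for every $u$ large enough that $T-\delta_u(L)>\tau_\eta$ the tree has performed all of its branchings before the window begins, so on $[T-\delta_u(L),T]$ one has $\vk B_\Gamma(t)=\vk V_u+\vk\beta_u\big(t-T+\delta_u(L)\big)$, where $\vk V_u:=\vk B_\Gamma(T-\delta_u(L))$ is a centred Gaussian vector with covariance $\Sigma(T)-\delta_u(L)I_\eta$ (by \neprop{Sigma_rec}, since $i(t)=\eta$ for $t>\tau_\eta$) and $\vk\beta_u$ is a standard $P_\eta$-dimensional Brownian motion with independent components which, crucially, is independent of $\vk V_u$ by \neprop{ind_increments}. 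Conditioning on $\vk V_u=(u+cT)\vk 1_\eta+\vk w$, the event $\{\exists\,t\in[T-\delta_u(L),T]\colon\vk W(t)>u\vk 1_\eta\}$ becomes $\{\exists\,r\in[0,Lu^{-2}]\colon\vk\beta_u(r)>(cr-cLu^{-2})\vk 1_\eta-\vk w\}$; integrating against the density of $\vk V_u$, substituting $\vk w=\vk y/u$, and rescaling time by Brownian scaling $\vk\beta_u(r)\EQD u^{-1}\vk B^*(u^2r)$, I arrive at
\begin{equation*}
	M(u,L)=u^{-P_\eta}\int_{\R^{P_\eta}}f_{\vk V_u}\!\Big((u+cT)\vk 1_\eta+\tfrac{\vk y}{u}\Big)\,\pk{\exists\,t'\in[0,L]\colon\vk B^*(t')>(ct'/u-cL/u)\vk 1_\eta-\vk y}\,\td\vk y.
\end{equation*}
Here the probability factor is a genuine, unconditional probability (this is why anchoring at the left endpoint is convenient), and its threshold converges to $-\vk y$ uniformly on bounded sets.

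Next I would expand the Gaussian density. With $\Sigma_u:=\Sigma(T)-\delta_u(L)I_\eta$, using $\Sigma_u^{-1}=\Sigma^{-1}(T)+\delta_u(L)\big(\Sigma^{-1}(T)\big)^2+O(u^{-4})$ together with $\Sigma^{-1}(T)\vk 1_\eta=\vk\lambda$ (so $\vk 1_\eta^\top\Sigma^{-1}(T)\vk 1_\eta=\vk 1_\eta^\top\vk\lambda$ and $\vk 1_\eta^\top\big(\Sigma^{-1}(T)\big)^2\vk 1_\eta=\vk\lambda^\top\vk\lambda$), one gets for $\vk y$ in compacts
\begin{equation*}
	f_{\vk V_u}\!\Big((u+cT)\vk 1_\eta+\tfrac{\vk y}{u}\Big)\;\sim\;\frac{\exp\!\big(-\tfrac12(u+cT)^2\vk 1_\eta^\top\vk\lambda\big)}{(2\pi)^{P_\eta/2}\abs{\Sigma(T)}^{1/2}}\;e^{-\frac L2\vk\lambda^\top\vk\lambda}\,e^{-\vk\lambda^\top\vk y},
\end{equation*}
the factor $e^{-\frac L2\vk\lambda^\top\vk\lambda}$ being exactly the limit of $\exp\!\big(-\tfrac12(u+cT)^2\delta_u(L)\vk\lambda^\top\vk\lambda\big)$. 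The same expansion applied to $\pk{\vk W(T)>u\vk 1_\eta}=\pk{\vk B_\Gamma(T)>(u+cT)\vk 1_\eta}$, where integrating $e^{-\vk\lambda^\top\vk y}$ over $\{\vk y>\vk 0\}$ yields the factor $\prod_i\lambda_i^{-1}$, shows that once the limit may be passed inside the integral,
\begin{equation*}
	\frac{M(u,L)}{\pk{\vk W(T)>u\vk 1_\eta}}\;\longrightarrow\;e^{-\frac L2\vk\lambda^\top\vk\lambda}\Big(\prod_i\lambda_i\Big)\int_{\R^{P_\eta}}e^{-\vk\lambda^\top\vk y}\,\pk{\exists\,t'\in[0,L]\colon\vk B^*(t')>-\vk y}\,\td\vk y,
\end{equation*}
and the substitution $\vk y=-\vk z$, then $z_i=x_i/\lambda_i$ (legitimate since $\vk\lambda>\vk 0$, as noted in the Remark), rewrites the right-hand side as $H(L)$ in the form~\eqref{H(L)}.

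The main obstacle is justifying this interchange via dominated convergence. Fix $\ve>0$ so small that $\lambda_i-\ve>0$ for every $i$. Because $\Sigma_u^{-1}\to\Sigma^{-1}(T)$ and the $\vk y$-quadratic remainder $-\tfrac1{2u^2}\vk y^\top\Sigma_u^{-1}\vk y$ is nonpositive, the density $f_{\vk V_u}\big((u+cT)\vk 1_\eta+\vk y/u\big)$, divided by the prefactor $\exp\!\big(-\tfrac12(u+cT)^2\vk 1_\eta^\top\vk\lambda\big)(2\pi)^{-P_\eta/2}\abs{\Sigma(T)}^{-1/2}$, is, for all large $u$ and all $\vk y\in\R^{P_\eta}$, bounded by $C\exp\!\big(-\vk\lambda^\top\vk y+\ve\sum_i\abs{y_i}\big)$. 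The probability factor is bounded by $\prod_i\pk{\sup_{t'\in[0,L]}B_i^*(t')>-y_i+o_u(1)}$, which is $\le 1$ for $y_i\ge0$ and, by the reflection principle, $\le 2e^{-y_i^2/(8L)}$ for $y_i\le-1$ and $u$ large. The product of these two bounds factorises over coordinates, and on each coordinate it is integrable uniformly in $u$: exponential decay $e^{-(\lambda_i-\ve)y_i}$ for $y_i\ge0$, and the Gaussian decay $e^{-y_i^2/(8L)}$ beating the exponential growth $e^{(\lambda_i+\ve)\abs{y_i}}$ for $y_i\le-1$. Dominated convergence then gives the two displayed limits and, along the way, the finiteness of $H(L)$ (which also follows from \nelem{Pit_constant}), completing the proof. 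The only delicate bookkeeping is keeping the deterministic threshold correction $(ct'/u-cL/u)\vk 1_\eta$ uniformly $o(1)$ in both $t'$ and $\vk y$, which is why the substitution $\vk w=\vk y/u$ (rather than $\vk y/(u+cT)$) is made, so that no $\vk y$-dependence leaks into that correction.
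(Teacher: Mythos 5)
Your proposal is correct and follows the same skeleton as the paper's proof: condition at the left endpoint $T-\delta_u(L)$ of the window, use Proposition~\ref{ind_increments} to turn the conditional probability into an unconditional one for a standard $P_\eta$-dimensional Brownian motion, rescale time by $u^2$ and space by $u$, expand the Gaussian density (producing the factor $e^{-\frac L2\vk\lambda^\top\vk\lambda}$ and the tilt $e^{-\vk\lambda^\top\vk y}$), and pass to the limit by dominated convergence before dividing by the orthant probability $\pk{\vk W(T)>u\vk 1_\eta}$. The only genuine differences are in two technical sub-steps, and both of your choices are legitimate. First, for the dominating function you bound the exceedance probability componentwise, using that the event $\{\exists\,t'\,\forall i\}$ implies $\{\forall i\,\exists\,t'\}$, independence of the coordinates of $\vk B^*$, and the reflection principle; the paper instead sums the coordinates with positive thresholds and invokes Piterbarg's inequality, which forces a decomposition over subsets $F\subset\{1,\ldots,P_\eta\}$ and noticeably heavier bookkeeping. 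Your bound is more elementary and exploits the special product structure of $\vk B^*$, which is available here but would not be in the general vector-valued setting the paper's technique is borrowed from. Second, you derive the asymptotics of $\pk{\vk W(T)>u\vk 1_\eta}$ (the analogue of~\eqref{gauss_assympotics}) by the same expansion-plus-domination argument, whereas the paper cites an external lemma; since Lemma~\ref{quadratic_solution_prop} guarantees $\vk\lambda>\vk 0$ so that the quadratic-programming solution sits at the vertex, your direct derivation is sound and makes the proof self-contained. The remaining discrepancies (centering at $(u+cT)\vk 1_\eta$ instead of $u\vk 1_\eta$ in the drift-adjusted coordinates, the sign convention $\vk y\leftrightarrow-\vk x$) are cosmetic, and your final change of variables indeed reproduces $H(L)$ exactly as in~\eqref{H(L)}.
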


\begin{proof}[Proof of Lemma~\ref{Pickands}]
	Let $\varphi_{t}$ denote the pdf of $\vk W(t)$. We can assume that $u$ is large
	enough to ensure that $i(T-\delta_u(L))=i(T)$. Then,
\begin{equation}\label{eq:6}
	M ( u, L )
	= u^{-P_{\eta}} \int_{\mathbb{R}^{P_{\eta}}}
  J ( u, T, L, \vk{x} ) \,
	\varphi_{T - \delta_u ( L )} \left( u \, \vk{1}_{\eta} - \frac{\vk{x}}{u} \right)
	\mathop{d \vk{x}},
\end{equation}
where
\begin{equation*}
  J ( u, T, L, \vk{x} )
	\coloneqq
  \mathbb{P} \left\{
		\exists \, t \in [T - \delta_u ( L ), T ] \colon
		\vk{W} ( t ) > u \, \vk{1}_{\eta}
		\mid
		\vk{W} ( T - \delta_u ( L ) ) = u \, \vk{1}_{\eta} - \frac{\vk{x}}{u}
	\right\}.
\end{equation*}
Next, using \neprop{ind_increments} we can rewrite the probability
\( J ( u, T, L, \vk{x} ) \) as follows
\begin{align*}
  J ( u, T, L, \vk{x} )
	& =
		\mathbb{P} \left\{
    \exists \, t \in [ T - \delta_u ( L ), T ] \colon
		\vk{W} ( t ) - \vk W ( T - \delta_u ( L ) )
		> \frac{\vk{x}}{u}
		\right\}
	\\[7pt]
	& =
		\mathbb{P} \left\{
    \exists \, t \in [ T - \delta_u ( L ), T ] \colon
		\vk{B}^{ * } ( t ) - \vk{B}^{ * } ( T - \delta_u ( L ) )
		> \frac{\vk{x}}{u} + \vk{c} ( t - T + \delta_u ( L ) )
		\right\}
	\\[7pt]
	& =
		\mathbb{P} \left\{
    \exists \, t \in [ 0, \delta_u ( L ) ] \colon
		\vk{B}^{ * } ( t ) > \frac{\vk{x}}{u} + \vk{c} \, t
		\right\}
	\\[7pt]
	& =
	  \mathbb{P} \left\{
    \exists \, t \in [ 0, L ] \colon
		\vk{B}^{ * } ( t ) > \vk{x} + \frac{\vk{c} \, t}{u}
		\right\},
\end{align*}
where $\vk B^*(t)\in\R^{P_\eta}$ is a Brownian motion with independent components. We have
\begin{align*}
	\pk{\exists \, t \in [0,L] \colon \vk B^*(t) > \vk x + \vk{c} \, tu^{-1}}
  \xrightarrow[u \to \infty]{}
	\pk{\exists \, t \in [0,L] \colon \vk B^*(t) > \vk x}
\end{align*}
for almost all $\vk x\in\R^{P_\eta}$.

Next, let us consider the factor
\( \varphi_{T - \delta_u ( L )} \). We have
\begin{equation*}
	\varphi_{T - \delta_u ( L )} \left( u \, \vk{1}_{\eta} - \frac{\vk{x}}{u} \right)
	=
	( 2 \pi )^{-P_{\eta}/2} \left| \Sigma ( T - \delta_u ( L ) ) \right|^{-1/2}
	\exp ( -G ( u, T, L, \vk{x} ) ),
\end{equation*}
where
\begin{eqnarray*}
	G ( u, T, L, \vk{x} )
	\coloneqq
	-\frac{1}{2}
	\left(
		u \, \vk{1}_{\eta} - \frac{\vk{x}}{u} + \vk{c} ( T - \delta_u ( L ) )
	\right)^\top
	\times
	\Sigma^{-1} ( T - \delta_u ( L ) )
	\left(
		u \, \vk{1}_{\eta} - \frac{\vk{x}}{u} + \vk{c} ( T - \delta_u ( L ) )
	\right).
\end{eqnarray*}
Using
\begin{equation*}
	\Sigma^{-1} ( T - \delta_u ( L ) )
	= \Sigma^{-1} ( T )
	-\delta_u ( L ) \, \Sigma^{-2} ( T )
	+o \left( \delta_u^2 ( L ) \right),
\end{equation*}
we find that the prefactor converges to
\( ( 2 \pi )^{-P_{\eta} / 2} \left| \Sigma ( T ) \right|^{-1/2} \), and we can focus
on the asymptotics of the exponent. We have
\begin{eqnarray*}
	B ( u, T, L, \vk{x} )
	=
	\frac{1}{2}
	\left(
		u \, \vk{1}_{\eta} - \vk{c} T
	\right)^\top
	\Sigma^{-1} ( T )
	\left(
		u \, \vk{1}_{\eta} + \vk{c} \, T
	\right)
	+\vk{x}^\top \Sigma^{-1} ( T ) \, \vk{1}_{\eta}
	-\frac{L}{2} \, \vk{1}_{\eta}^\top \Sigma^{-2} ( T ) \vk{1}_{\eta}
	+O ( L / u ),
\end{eqnarray*}
\kk{where} %In this calculation,
we used \neprop{Sigma_rec}. Hence, as
$u\to\infty$,
\begin{align*}
	\varphi_{T-\delta_u(L)}(u\vk 1_\eta -\vk x/u)
	\sim \varphi_T(u\vk 1_\eta) \, e^{\vk x^\top\vk\lambda} \, e^{-\frac{L}{2}\vk \lambda^\top\vk\lambda}.
\end{align*}

Finally, we can bound the pdf under the integral~\eqref{eq:6}, for all large
enough $u$, as follows:
\begin{equation}
	\label{dominated_bound}
	\frac{\varphi_{T - \delta_u ( L )} ( u \vk{1}_{\eta} - \vk{x} / u )}{\varphi_T ( u \vk{1}_{\eta} )}
	\leq
	A \, \exp \left( \vk{x}^\top \vk{\lambda}_{\vk{x}} ( \varepsilon ) \right)
	\eqqcolon
	\bar{\varphi} ( \vk{x} ),
\end{equation}
where
\( \vk{\lambda}_{\vk{x}} ( \varepsilon ) \coloneqq \vk{\lambda} + \sign ( \vk{x} ) \, \varepsilon > \vk{0} \)
for all small enough \( \varepsilon \) and
\begin{equation*}
	A \coloneqq
	\max_{t \in [ ( \tau_{\eta} + T ) / 2, T ]} \sqrt{\frac{| \Sigma ( T ) |}{| \Sigma ( t ) |}}
	\times \max_{\substack{
			t \in [ ( \tau_{\eta} + T ) / 2, T ] \\[3pt]
			\vk{y}, \, \vk{z} \in [ -\vk{1}, \vk{1} ] \subset \R^{P_{\eta}}
		}}
	\exp \left( \vk{y}^\top \Sigma^{-1} ( t ) ( \vk{1}_{\eta} + \vk{z} ) \right)
	< \infty
\end{equation*}
In proving~\eqref{dominated_bound} we used the fact that both
$\vk{c} L u^{-1}$ and $\vk cTu^{-1}$ belong to $\in[-\vk 1,\vk 1]$ for large
enough $u$. The constant $A$ is clearly finite since $\Sigma(t)$ is continuous.

To find the asymptotics of $M(u,L)$, we apply dominated convergence theorem. The probability under the integral $J(u, T, L, \vk{x})$ can bounded
as follows. For  $\vk x\in\R^{P_\eta}$, define
\begin{equation*}
	F_+ ( \vk{x} ) \coloneqq \left\{ i \colon x_i > 0 \right\},
\end{equation*}
and two associated sets
\begin{equation*}
	\mathcal{S}_F \coloneqq \left\{ \vk{x} \in \R^{P_{\eta}} \colon F_+ ( \vk{x} ) = F \right\},
	\qquad
	\mathcal{C} \coloneqq \left\{ \vk{x} \in \R^d \colon \sum_{i \in F_+ ( \vk{x} )} ( x_i + \varepsilon ) < 0  \right\}.
\end{equation*}
By Piterbarg inequality (see, e.g.,~\cite[Theorem 8.1]{Pit96}) for each
$\vk x\in\mathcal{C}^c$ we have
\begin{align*}
J(u,T,L,\vk x)&\leq
	\pk{\exists \, t\in[0,L] \ \forall i\in F \colon B^*_i(t)>x_i+\varepsilon}\\
	&\leq
	\pk{\exists \, t\in[0,L] \colon \sum_{i\in F_+(\vk x)} B^*_{i}(t)
		> \sum_{i\in F_+(\vk x)}(x_i+\varepsilon)}
	\\[7pt]
	&\leq
	C \left(\sum_{i\in F_+(\vk x)}(x_i+\varepsilon)\right)^{\gamma}
	\exp\left( -\delta\sum\limits_{i\in F_+(\vk x)} (x_i+\varepsilon)^2 \right)
	\eqqcolon \bar R(\vk x)
\end{align*}
for some positive constants $C, \ \gamma, \ \delta$ and \( \varepsilon \) independent of
\( \vk{x} \). Thus, it is enough to show that the integral
$\int_{\R^{P_\eta}}\bar R(\vk x) \, \bar\varphi(\vk x)dx$ is finite. Setting
\begin{equation*}
	\mathcal{C}_F \coloneqq \left\{ \vk x\in \R^{P_{\eta}} \colon \vk x>\vk 0, \  \sum_{i\in F}(x_i+c)<0 \right\},
\end{equation*}
we obtain
\begin{eqnarray*}
	\int_{\R^{P_{\eta}}} \bar{R} ( \vk{x} ) \, \bar{\varphi} ( \vk{x} ) \mathop{d \vk{x}}
	&=&
	C A \sum_{F \subset \{ 1, \ldots, P_{\eta} \}}
	\Bigg[
	\int_{\mathcal{S}_F \setminus C} \left( \sum_{i \in F} ( x_i + \varepsilon ) \right)^{\gamma}
	\exp \left( \vk{x}^\top \vk{\lambda}_{\vk{x}} ( \varepsilon ) - \delta \sum_{i \in F} ( x_i + \varepsilon )^2 \right)
	\mathop{d \vk{x}}
	\\
	&&+\int_{\mathcal{S}_F \cap \mathcal{C}} \exp \left( \vk{x}^\top \vk{\lambda}_{\vk{x}} ( \varepsilon ) \right)
	\mathop{d \vk{x}}
	\Bigg]
	\eqqcolon
	C A \sum_{F \subset \{ 1, \ldots, P_{\eta} \}} \Big[ A_1 + A_2 \Big].
\end{eqnarray*}
The integral \( A_1 \) may be estimated as follows:
\begin{align*}
	& A_1 \leq
	\exp \left(
	-\frac{1}{4 \delta} \, \left\| \vk{\lambda}_F + \varepsilon \, \vk{1}_F \right\|_2^2
	-\varepsilon \, \vk{1}_F^\top \left( \vk{\lambda}_F + \varepsilon \, \vk{1}_F \right)
	\right)
	\\[7pt]
	& \hspace{30pt} \times
	\int_{\{ \vk{x}_F > \vk{0} \} \setminus \mathcal{C}_F}
	\exp \left( -\delta \left\|
	\vk{x}_F + \varepsilon \, \vk{1}_{F} - \frac{1}{2 \delta} \, \vk{\lambda}_{\vk{x}, F} ( \varepsilon )
	\right\|_2^2 \right)
	\left\| \vk{x}_F + \varepsilon \, \vk{1}_F \right\|_1^{\gamma}
	\mathop{d \vk{x}_F}
	\\[7pt]
	& \hspace{30pt} \times
	\int_{\vk{x}_{F^c} \leq \vk{0}_{F^c}}
	\exp \left( \vk{x}^\top_{F^c} \vk{\lambda}_{\vk{x}, F^c} ( \varepsilon ) \right)
	\mathop{d \vk{x}_{F^c}},
\end{align*}
where \( \left\| \, \vk{\cdot} \, \right\|_p \) denotes the \( \ell_p \) norm. Next, we
bound \( A_2 \) as follows:
\begin{equation*}
	A_2 \leq
	\int_{\vk{x}_F^c \leq \vk{0}_F^c}
	\exp \left( \vk{x}^\top_{F^c} \, \vk{\lambda}_{\vk{x}, F^c} ( \varepsilon ) \right)
	\mathop{d \vk{x}}
	\times \int_{\mathcal{C}_F} \mathop{d \vk{x}_F}.
\end{equation*}
Combining the two bounds together, we obtain
\begin{eqnarray*}
	\int_{\mathbb{R}^{P_{\eta}}} \bar{R} ( \vk{x} ) \, \bar{\varphi} ( \vk{x} ) \mathop{d \vk{x}}
	\leq
	C A \sum_{S \subset \{ 1, \ldots, P_{\eta} \}}
	\prod_{i \in F^c} ( \lambda_i + \varepsilon )^{-1}
	\exp \left(
	-\frac{1}{4 \delta} \, \left\| \vk{\lambda}_F + \varepsilon \, \vk{1}_F \right\|_2^2
	-\varepsilon \, \vk{1}_F^\top ( \vk{\lambda}_F + \varepsilon \, \vk{1}_F )
	\right)
	A_3 ( F ),
\end{eqnarray*}
where
\begin{equation*}
	A_3 ( F )
	\coloneqq
	\int_{\mathbb{R}^{|F|}} \exp \left( -\delta \left\|
	\vk{x}_F + \varepsilon \, \vk{1}_F - \frac{1}{2 \delta} \, \vk{\lambda}_{\vk{x}, F} ( \varepsilon )
	\right\|_2^2 \right)
	\left\| \vk{x}_F + \varepsilon \, \vk{1}_F \right\|_1^{\gamma}
	\mathop{d \vk{x}_F}
	+\frac{\varepsilon^{|S|}}{|S|!}
	< \infty
\end{equation*}
for all \( \delta > 0 \).

Hence, by
the dominated convergence theorem, it follows that as \( u \to \infty \)
\begin{equation}
	\label{Pickands_decomposition}
	\begin{aligned}
		M(u,L)
		& \sim u^{-P_\eta} \, \varphi_T(u\vk 1_\eta) \, e^{-\frac{L}{2}\vk \lambda^\top\vk\lambda}
			\int_{\R^{P_\eta}}\pk{\exists \, t\in[0,L]:~\vk B^*(t)>\vk x} \, e^{-\vk x^\top\vk\lambda}\td\vk x
		\\[7pt]
		&=
			\frac{
			\varphi_T(u\vk 1_\eta) \, e^{-\frac{L}{2}\vk \lambda^\top\vk\lambda}
			}{\prod_{i=1}^{P_\eta}\lambda_i} \, u^{-P_\eta}
			\int_{\R^{P_\eta}}\pk{\exists \, t\in[0,L] \colon \vk\lambda\vk B^*(t)>\vk x}e^{-\sum_{i=1}^{P_\eta}x_i} \, \td\vk x,
	\end{aligned}
\end{equation}
which, together with the following asymptotic formula (see~\cite[Lemma 4.4]{MR4569300})
\begin{equation}
	\label{gauss_assympotics}
	\pk{\vk W(T)>u\vk 1_{\eta}}
	\sim \frac{u^{-P_{\eta}}}{\prod_{i=1}^{P_\eta}\lambda_i}
	\, \varphi_T(u\vk 1_\eta),\qquad u\to\infty,
\end{equation}
provides us the claimed assertion.

\end{proof}

\begin{lem}\label{Pit_constant}
For $H(L)$ defined in~\eqref{H(L)} we have
\begin{align*}
	\mathcal{H}_{P_\eta,1/\mu_0(T)}=\limit{L}H(L)\in(0,\infty).
\end{align*}
\end{lem}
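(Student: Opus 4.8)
The plan is to show that, for every $L>0$, the quantity $H(L)$ coincides exactly with the restriction to the time window $[0,L]$ of the integral defining $\mathcal H_{P_\eta,1/\mu_0(T)}$; the lemma then follows by monotone convergence together with an elementary integrability estimate. First I would record the reduction coming from \neprop{eigenvalues}: since $\vk 1_\eta$ is an eigenvector of $\Sigma(T)$ for the eigenvalue $\mu_0(T)>0$, we have $\vk\lambda=\Sigma^{-1}(T)\vk 1_\eta=\mu_0(T)^{-1}\vk 1_\eta$, so with $\lambda:=1/\mu_0(T)$ the process $\vk\lambda\vk B^*$ is $\lambda\vk B^*$ and $\vk\lambda^\top\vk\lambda=P_\eta\lambda^2$, whence
\[
	H(L)=e^{-\frac{L}{2}P_\eta\lambda^2}\int_{\R^{P_\eta}}\pk{\exists\,t\in[0,L] \colon \forall\,i\ \lambda B_i^*(t)>x_i}\,e^{\sum_{i=1}^{P_\eta}x_i}\,\td\vk x ,
\]
and the assertion to be proved becomes $\lim_{L\to\infty}H(L)=\mathcal H_{P_\eta,\lambda}\in(0,\infty)$.

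The core of the argument is the identity
\[
	H(L)=\int_{\R^{P_\eta}}\pk{\exists\,t\in[0,L] \colon \forall\,i\ \lambda B_i^*(t)-\lambda^2 t>x_i}\,e^{\sum_{i=1}^{P_\eta}x_i}\,\td\vk x ,
\]
which I would derive as follows. By Fubini--Tonelli the displayed formula for $H(L)$ equals $e^{-\frac{L}{2}P_\eta\lambda^2}\,\E{V(L)}$ with $V(L):=\int_{\R^{P_\eta}}\mathbbm{1}\{\exists\,t\in[0,L] \colon \lambda B_i^*(t)>x_i\ \forall\,i\}\,e^{\sum_i x_i}\,\td\vk x$. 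Applying the time-reversal map $B_i^*(\cdot)\mapsto B_i^*(L)-B_i^*(L-\cdot)$, which preserves the joint law of $(B_1^*,\ldots,B_{P_\eta}^*)$ on $[0,L]$, and then the substitution $x_i\mapsto\lambda B_i^*(L)-x_i$ in the $\td\vk x$-integral together with the sign change $B_i^*\mapsto-B_i^*$ (also law-preserving), a direct computation shows that $e^{-\lambda\sum_i B_i^*(L)}V(L)$ has the same law as $V(L)$, hence $\E{e^{-\lambda\sum_i B_i^*(L)}V(L)}=\E{V(L)}$. Consequently
\[
	H(L)=\E{e^{-\lambda\sum_i B_i^*(L)-\frac{1}{2}\lambda^2 P_\eta L}\,V(L)} ,
\]
and the factor in this expectation is precisely the Cameron--Martin density on $[0,L]$ under which each coordinate Brownian motion $B_i^*$ acquires the drift $-\lambda$; the corresponding change of measure replaces $\lambda B_i^*(t)$ by $\lambda B_i^*(t)-\lambda^2 t$ throughout $V(L)$, and a further application of Fubini--Tonelli yields the identity.

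It then remains to pass to the limit. Since the event $\{\exists\,t\in[0,L] \colon \forall\,i\ \lambda B_i^*(t)-\lambda^2 t>x_i\}$ increases with $L$, monotone convergence gives $\lim_{L\to\infty}H(L)=\int_{\R^{P_\eta}}\pk{\exists\,t\in(0,\infty) \colon \forall\,i\ \lambda B_i^*(t)-\lambda^2 t>x_i}\,e^{\sum_i x_i}\,\td\vk x=\mathcal H_{P_\eta,\lambda}$, a priori possibly infinite. For finiteness I would use independence of the coordinates: $\pk{\exists\,t\in(0,\infty) \colon \forall\,i\ \lambda B_i^*(t)-\lambda^2 t>x_i}\le\prod_i\pk{\sup_{t>0}(\lambda B_i^*(t)-\lambda^2 t)>x_i}=\prod_i e^{-2x_i^+}$, as $\sup_{t>0}(\lambda B_i^*(t)-\lambda^2 t)$ is $\mathrm{Exp}(2)$-distributed, and since $\int_\R e^{x-2x^+}\,\td x=2$ this gives $\mathcal H_{P_\eta,\lambda}\le 2^{P_\eta}<\infty$; alternatively one may invoke the uniform bound $H(L)\le C$ furnished by \neprop{Korsh}, because $M(u,L)\le\pk{\exists\,t\in[0,T] \colon \vk W(t)>u\vk 1_\eta}\le C\,\pk{\vk W(T)>u\vk 1_\eta}$ while $H(L)=\lim_{u\to\infty}M(u,L)/\pk{\vk W(T)>u\vk 1_\eta}$ by \nelem{Pickands}. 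Positivity is immediate, since on $\{\vk x<\vk 0\}$ the event has probability $1$ by path-continuity at $0$, so $\mathcal H_{P_\eta,\lambda}\ge\int_{\{\vk x<\vk 0\}}e^{\sum_i x_i}\,\td\vk x=1$.

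The step I expect to be the main obstacle is the identity in the second paragraph: it is not obvious a priori that the endpoint tilt $e^{-\lambda\sum_i B_i^*(L)}$ is neutral in expectation; seeing that this follows from the time-reversal and sign symmetries of Brownian motion, and that the residual Gaussian prefactor is exactly the Cameron--Martin density introducing the drift $-\lambda^2$, is the one non-routine point, while the remaining steps are standard.
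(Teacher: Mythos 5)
Your argument is correct, and its core coincides with the paper's: you establish the same key identity, namely that $H(L)$ equals $\int_{\R^{P_\eta}}\pk{\exists\,t\in[0,L]\ \forall i:\ \lambda B_i^*(t)-\lambda^2 t>x_i}e^{\sum_i x_i}\,\td\vk x$ with $\lambda=1/\mu_0(T)$ (using \neprop{eigenvalues} for $\vk\lambda=\vk 1_\eta/\mu_0(T)$), and then conclude by monotone convergence. The paper derives this identity by the chain Fubini--Tonelli, shift of the $\vk x$-variable, a cited shift-invariance/Girsanov lemma, and time reversal of increments; you obtain exactly the same mechanism by hand, first proving the tilt-neutrality $\E{e^{-\lambda\sum_i B_i^*(L)}V(L)}=\E{V(L)}$ via the $\vk x$-substitution and the time-reversal symmetry (this step is correct: after the substitution the tilted quantity is a.s.\ the same functional of the reversed increments), and then applying Cameron--Martin with drift $-\lambda t$ per coordinate; so this part is essentially the paper's proof made self-contained. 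Where you genuinely diverge is the finiteness: the paper sandwiches $H(L)$ between $M(u,L)/\pk{\vk W(T)>u\vk 1_\eta}$ and the Korshunov-type bound of \neprop{Korsh} (via \nelem{Pickands}), obtaining $H(L)\le 2^{P_\eta}+\varepsilon$, whereas you bound the integrand by the product $\prod_i e^{-2x_i^+}$ using independence of the coordinates and the $\mathrm{Exp}(2)$ law of $\sup_{t>0}(\lambda B_i^*(t)-\lambda^2 t)$, giving directly $\mathcal H_{P_\eta,1/\mu_0(T)}\le 2^{P_\eta}$. Your route is more elementary and independent of \neprop{Korsh} and \nelem{Pickands}, while the paper's route reuses machinery it needs anyway; both are valid, and your explicit lower bound $\mathcal H\ge 1$ from $\{\vk x<\vk 0\}$ cleanly settles the positivity that the paper leaves implicit.
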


\begin{proof}[Proof of Lemma~\ref{Pit_constant}]
Applying the Fubini-Tonelli theorem yields
\begin{align*}
	H(L)
	&=	e^{-\frac{L}{2}\vk \lambda^\top\vk\lambda}
		\int_{\R^{P_\eta}}\pk{\exists~t\in[0,L]:~\vk\lambda\vk B^*_\eta(t)>\vk x}e^{\sum_{i=1}^{P_\eta} x_i}\td\vk x
	\\[7pt]
	&=
		\E{e^{-\vk \lambda^\top \vk B^*_\eta(L) -\frac{L}{2}\vk \lambda^\top\vk\lambda +\vk \lambda^\top \vk B^*_\eta(L)}
		\int_{\R^{P_\eta}}\mathbb{I} (\exists \, t\in[0,L] \colon \vk\lambda\vk B^*_\eta(t)>\vk x)
		e^{\sum_{i=1}^{P_\eta} x_i}\td\vk x }
	\\[7pt]
	&=
		\E{e^{-\vk \lambda^\top \vk B^*_\eta(L) -\frac{L}{2}\vk \lambda^\top\vk\lambda }
		\int_{\R^{P_\eta}}\mathbb{I} (\exists \, t\in[0,L] \colon
		\vk\lambda \left(\vk B^*_\eta(t)-\vk B^*_\eta(L)\right) >\vk x)e^{\sum_{i=1}^{P_\eta}x_i}\td\vk x }
	\\[7pt]
	&=
		\E{\int_{\R^{P_\eta}}\mathbb{I} (\exists \, t\in[0,L] \colon
		\vk\lambda\left(\vk B^*_\eta(t)-\vk B^*_\eta(L)\right) -\vk\lambda^2 (t - L)>\vk x)
		e^{\sum_{i=1}^{P_\eta}\vk x}\td\vk x }
	\\[7pt]
	&=
		\int_{\R^{P_\eta}}\pk{ \exists \, t\in[0,L] \colon \vk\lambda \vk B^{*}_\eta(t)-\vk \lambda^2 t >\vk x}
		e^{\sum_{i=1}^{P_\eta}x_i}\td\vk x,
\end{align*}
where the second to last step follows from~\cite[Lem
B.6]{hashorva2021shiftinvariant} and the last is a direct consequence of the
Brownian motion increments' properties. Hence, $H(L)$ is an increasing function.
Using the following inequality, which follows from \neprop{Korsh},
\begin{eqnarray*}
	(H(L)-\varepsilon)\pk{\vk B_\Gamma(T_1)>u\vk{1}_{i(T_1)}}
	\leq M(u,L)
	\leq\pk{\exists \, t\in[0,T]:~\vk B_\Gamma(t)>u\vk 1_{i(t)}}
	\leq 2^{P_\eta} \, \pk{\vk B_\Gamma(T_1)>u\vk{1}_{i(T_1)}}
\end{eqnarray*}
for any small positive $\varepsilon$ and large enough $u$, we obtain that
\begin{align*}
	H(L)\leq2^{P_\eta}+\varepsilon<\infty
\end{align*}
implies that $H(L)$ is bounded. In order to complete the proof it remains to
apply the monotone convergence theorem and notice that according to
\neprop{eigenvalues}
\begin{equation*}
	\vk\lambda=\frac{1}{\mu_0(t)}\vk 1_\eta.
\end{equation*}
 \end{proof}

\begin{lem}\label{m}
	For  $L>0$ and large enough $u$,
	\begin{equation*}
		\frac{m(u,L)}{\pk{\vk W(T)>u\vk 1_\eta}}\leq C_1e^{-C_2 L}
	\end{equation*}
	for some positive constants $C_1, \ C_2$, which do not depend on $u$ or $L$.
\end{lem}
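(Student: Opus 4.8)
The plan is to split $[0,T-\delta_u(L)]$ at a fixed cutoff $\varepsilon_0\in(0,T-\tau_\eta)$. By a union bound, for $u$ large enough that $\delta_u(L)<\varepsilon_0$,
\[
	m(u,L)\le m_1(u)+m_2(u,L),\quad
	m_1(u)=\pk{\exists\,t\in[0,T-\varepsilon_0]\colon\vk W(t)>u\vk 1_\eta},\quad
	m_2(u,L)=\pk{\exists\,t\in[T-\varepsilon_0,T-\delta_u(L)]\colon\vk W(t)>u\vk 1_\eta}.
\]
The purpose of the cutoff is that on $[T-\varepsilon_0,T]$ one has $i(t)\equiv\eta$, so $\vk W(t)$ is a genuine $P_\eta$-dimensional Gaussian process whose covariance $\Sigma(t)$ has $\vk 1_\eta$ as eigenvector with eigenvalue $\mu_0(t)$ (Propositions~\ref{Sigma_rec}--\ref{eigenvalues}) and $\mu_0'(t)\equiv1$ there; moreover, by~\eqref{cov_formula}, the variance $v(t)$ of the averaged process $\bar B(t):=P_\eta^{-1}\vk 1_\eta^\top\vk B_\Gamma(t)$ is continuous and strictly increasing on all of $[0,T]$ with $v(T)=\mu_0(T)/P_\eta$.

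For $m_1(u)$ I average: $\{\vk W(t)>u\vk 1_\eta\}$ forces $\bar B(t)-ct>u$, and $\bar B$ is a continuous centred Gaussian process with independent increments (Proposition~\ref{ind_increments}), hence a time-changed Brownian motion with variance $v$. The reflection principle and $2\bar\Phi(x)\le e^{-x^2/2}$ give $m_1(u)\le\exp\!\big(-(u-\abs{c}T)^2/(2v(T-\varepsilon_0))\big)$; since $v(T-\varepsilon_0)<v(T)=\mu_0(T)/P_\eta$, comparing with Proposition~\ref{w(t)_asympt} yields $m_1(u)/\pk{\vk W(T)>u\vk 1_\eta}\le u^{P_\eta}e^{-c_0u^2+o(u^2)}\to0$ for some $c_0>0$, which is below $\tfrac12C_1e^{-C_2L}$ once $u$ is large (depending on $L$), whatever positive $C_1,C_2$ are eventually chosen.

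For $m_2(u,L)$ I cover $[T-\varepsilon_0,T-\delta_u(L)]$ by the Pickands-scale intervals $\Delta_k=[s_k,s_k+u^{-2}]$, $s_k=T-(k+1)u^{-2}$, $k=L,\dots,\lceil\varepsilon_0u^2\rceil$. Conditioning on $\vk W(s_k)$ and rescaling time by $u^{-2}$ exactly as in the proof of Lemma~\ref{Pickands} (using Proposition~\ref{ind_increments} and $i(\cdot)\equiv\eta$ on $\Delta_k$),
\[
	\pk{\exists\,t\in\Delta_k\colon\vk W(t)>u\vk 1_\eta}
	=u^{-P_\eta}\int_{\R^{P_\eta}}\pk{\exists\,s\in[0,1]\colon\vk B^*(s)>\vk z+\vk c\,s/u}\,\varphi_{s_k}\!\left(u\vk 1_\eta-\tfrac{\vk z}{u}\right)d\vk z,
\]
with $\vk B^*\in\R^{P_\eta}$ a Brownian motion with independent components. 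Since $\Sigma^{-1}(s_k)\vk 1_\eta=\mu_0^{-1}(s_k)\vk 1_\eta$, the density factor equals $(2\pi)^{-P_\eta/2}\abs{\Sigma(s_k)}^{-1/2}\exp\!\big(-(u+cs_k)^2P_\eta/(2\mu_0(s_k))\big)\exp(a_k\vk 1_\eta^\top\vk z)\exp\!\big(-\vk z^\top\Sigma^{-1}(s_k)\vk z/(2u^2)\big)$ with $a_k=(u+cs_k)/(u\mu_0(s_k))$. Dropping the last sub-unit factor, bounding $\abs{\Sigma(s_k)}^{-1/2}$ and $a_k$ uniformly over the compact range $s_k\in[T-\varepsilon_0,T]$ (for $u$ large $a_k$ lies in a fixed $[a_-,a_+]\subset(0,\infty)$), and invoking the same Piterbarg-inequality domination of the $\vk z$-integral as in the proof of Lemma~\ref{Pickands} — which bounds $\sup_{a\in[a_-,a_+]}\int_{\R^{P_\eta}}\pk{\exists\,s\in[0,1]\colon\vk B^*(s)>\vk z}e^{a\vk 1_\eta^\top\vk z}d\vk z<\infty$ — gives a constant $C_3$, independent of $k$ and $u$, with $\pk{\exists\,t\in\Delta_k\colon\vk W(t)>u\vk 1_\eta}\le C_3\,u^{-P_\eta}\exp\!\big(-(u+cs_k)^2P_\eta/(2\mu_0(s_k))\big)$.

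It remains to convert the strict monotonicity of $\mu_0$ into geometric decay in $k$: with $g(t)=(u+ct)^2/\mu_0(t)$ and $\mu_0'\equiv1$ on $[T-\varepsilon_0,T]$, one has $-g'(t)=\big((u+ct)^2-2c(u+ct)\mu_0(t)\big)/\mu_0^2(t)\ge u^2/(4\mu_0^2(T))$ uniformly for $u$ large, so integrating over $[s_k,T]$ (of length $(k+1)u^{-2}$) yields $g(s_k)-g(T)\ge k/(4\mu_0^2(T))$. Summing the resulting geometric series over $k\ge L$ and comparing with the asymptotics of $\pk{\vk W(T)>u\vk 1_\eta}$ from Proposition~\ref{w(t)_asympt} gives $m_2(u,L)/\pk{\vk W(T)>u\vk 1_\eta}\le C_1'e^{-C_2L}$ with $C_2=P_\eta/(8\mu_0^2(T))$, for $u$ large; together with the $m_1$ bound this proves the claim. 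The one genuinely delicate point is producing a single constant $C_3$ valid simultaneously for all $O(u^2)$ intervals $\Delta_k$; this rests on the compactness of $\{s_k\colon k\}$ inside $(\tau_\eta,T]$ and on the domination estimate already established for Lemma~\ref{Pickands}, after which the geometric-sum step is elementary.
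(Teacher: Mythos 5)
Your proposal is correct, but it takes a genuinely different route from the paper. The paper's proof is essentially a two-step reduction: it applies the already-established Korshunov--Wang-type bound of Proposition~\ref{Korsh} on the interval $[0,T-\delta_u(L)]$ to get $m(u,L)\le C\,\pk{\vk W(T-\delta_u(L))>u\vk 1_\eta}$, and then compares the two one-point Gaussian probabilities at $T-\delta_u(L)$ and $T$ through the asymptotics \eqref{gauss_assympotics} and an expansion of the exponent, the shift $\delta_u(L)=Lu^{-2}$ producing exactly the factor $e^{-C_2L}$. You avoid Proposition~\ref{Korsh} altogether and instead run the classical ``remote interval'' argument of Gaussian extreme-value theory: a reflection/Chernoff bound for the averaged process $\bar B$ kills $[0,T-\varepsilon_0]$ (using $v(T-\varepsilon_0)<\mu_0(T)/P_\eta$, so this contribution is superexponentially smaller than $\pk{\vk W(T)>u\vk 1_\eta}$ — note this is where your threshold in $u$ acquires its dependence on $L$, which the statement of the lemma permits), while $[T-\varepsilon_0,T-\delta_u(L)]$ is chopped into $u^{-2}$-blocks, each estimated by the same conditioning-plus-domination computation as in Lemma~\ref{Pickands}, with geometric decay in the block index coming from $\mu_0'\equiv 1$ on $(\tau_\eta,T]$. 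Your two delicate points are real but handled correctly: the local constant $C_3$ is uniform over the blocks because $[T-\varepsilon_0,T]\subset(\tau_\eta,T]$ is compact, $\Sigma(\cdot)$ is there continuous and nondegenerate, and the tilt $a_k$ stays in a fixed compact subset of $(0,\infty)$, so the Piterbarg-type domination integral is uniformly finite (bounding $e^{a z_i}$ by $e^{a_+z_i}$ or $e^{a_-z_i}$ according to the sign of $z_i$); and the small drift $\vk c s/u$ is absorbed by an $\varepsilon$-shift exactly as in Lemma~\ref{Pickands}. What each approach buys: the paper's argument is much shorter and gives a bound valid for all $u$ once Proposition~\ref{Korsh} is available; yours is self-contained at the level of this lemma, does not need the non-asymptotic all-$u$ inequality, and yields an explicit rate $C_2=P_\eta/(8\mu_0^2(T))$, at the cost of rerunning the local analysis uniformly over $O(u^2)$ blocks. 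Only cosmetic repairs are needed (start the blocks at $k=\lfloor L\rfloor$ for non-integer $L$, and note $s_k>\tau_\eta$ for the extreme block), none of which affects the argument.
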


\begin{proof}[Proof of Lemma~\ref{m}]
	Applying \neprop{Korsh}
	\begin{equation*}
		\frac{m ( u, L )}{\pk{ \vk{W} ( T ) > u \, \vk{1}_{\eta} }}
		\leq C \,
		\frac{
			\pk{ \vk{W} ( T - \delta_u ( L ) ) > u \, \vk{1}_{\eta} }
		}{
			\pk{ \vk{W} ( T ) > u \, \vk{1}_{\eta} }
		}
		= C \,
		\frac{
			\pk{ \vk{W} ( T ) > \vk{a} ( u, L, T )}
		}{
			\pk{ \vk{W} ( T ) > u \, \vk{1}_{\eta} }
		},
	\end{equation*}
	where $C$ defined in \neprop{Korsh} and
	\begin{equation*}
		\vk{a}( u, L, T )
		\coloneqq
		\sqrt{\frac{T}{T - \delta_u ( L )}}
		\Big( u \, \vk{1}_{\eta} + \vk{c} ( T - \delta_u ( L ) ) \Big)
		-\vk{c} T.
	\end{equation*}
	Hence, regarding \eqref{gauss_assympotics}, for any $\varepsilon>0$ and large enough $u$
	\begin{equation}\label{eq:4}
		\begin{aligned}
			\frac{m ( u, L )}{\pk{ \vk{W} ( T ) > u \, \vk{1}_{\eta} }}
			& \leq
				C \, ( 1 + \varepsilon ) \,
				\left(
				\frac{T}{T - \delta_u ( L )}
				\right)^{P_{\eta} / 2}
				\frac{
				\varphi_T \left( \vk{a} ( u, L, T ) \right)
				}{
				\varphi_T \left( u \vk{1}_{\eta} \right)
				}
			\\[7pt]
			& \leq
				C \, ( 1 + \varepsilon )^{1 + P_{\eta} / 2} \,
				\frac{
				\varphi_T \left( \vk{a} ( u, L, T ) \right)
				}{
				\varphi_T \left( u \vk{1}_{\eta} \right)
				}.
		\end{aligned}
	\end{equation}
	It remains to bound the quotient
	\( \varphi_T ( \vk{a} ( u, L, T ) ) / \varphi_T ( u \vk{1}_{\eta} ) \). To this end, let us
	first find the asymptotics of \( \vk{a} ( u, L, T ) \), as \( u \to \infty \)
	\begin{align*}
		\vk{a} ( u, L, T )
		& =
			u \vk{1}_{\eta} \left( 1 - \frac{\delta_u ( L )}{T} \right)^{-1/2}
			-\vk{c} T \left( 1 - \left( 1 - \frac{\delta_u ( L )}{T} \right)^{1/2} \right)
		\\[7pt]
		& \sim
			u \vk{1}_{\eta}
			\left( 1 +\frac{L}{2 T u} \right)
			+O ( L u^{-2} ).
	\end{align*}
	In this computation we used that \( \delta_u ( L ) = L u^{-2} \). Therefore,
	\begin{eqnarray*}
		\Big( \vk{a} ( u, L, T ) + \vk{c} T \Big)^\top \,
		\Sigma^{-1} ( T ) \,
		\Big( \vk{a} ( u, L, T ) + \vk{c} T \Big)
		&=&
		\left(
			u \vk{1}_{\eta}
			+\vk{c} T
		\right)^\top
		\Sigma^{-1} ( T )
		\left(
			u \vk{1}_{\eta}
			+\vk{c} T
		\right)\\
		&&+\frac{L}{T} \, \vk{1}_{\eta}^\top \Sigma^{-1} ( T ) \vk{1}_{\eta}
		+O ( L u^{-1} ),
	\end{eqnarray*}
	which yields
	\begin{equation*}
		\frac{\varphi_T ( \vk{a} ( u, L, T ) )}{\varphi_T ( u \vk{1}_{\eta} )}
		\leq
		C \, \exp \left( \frac{-L}{2T} \, \vk{1}_{\eta}^\top \Sigma^{-1} \vk{1}_{\eta} \right).
	\end{equation*}
	Combining this with~\eqref{eq:4}, we obtain the desired result.
\end{proof}

We can now proceed to the proof of \netheo{main}.

%\subsection{Bounds}

\begin{proof}[Proof of Theorem~\ref{main}]
	Using~\nelem{m}, we obtain that
	for any positive $L$ and large enough $u$
	\begin{align*}
		\frac{M(u,L)}{\pk{\vk W(T)>u\vk 1_{\eta}}}
		& \leq \frac{\pk{\exists \, t\in[0,T] \colon \vk W(t)>u\vk 1_{\eta}}}{\pk{ \vk W(T)>u\vk 1_{\eta}}}
		\\[7pt]
		& \leq \frac{M(u,L)}{\pk{\vk W(T)>u\vk 1_{\eta}}}
			+\frac{m(u,L)}{\pk{\vk W(T)>u\vk 1_{\eta}}},
	\end{align*}
	where the last term on the right is at most \( C_1 \exp ( -C_2 L ) \) with
	some positive constants \( C_1 \) and \( C_2 \).
	Hence, letting $u\to\infty$ and using \nelem{Pickands} we obtain that
	\begin{align*}
		H(L)
		& \leq \liminf_{u\to\infty}\frac{\pk{\exists t\in[0,T]:~\vk W(t)>u\vk 1_{\eta}}}{\pk{ \vk W(T)>u\vk 1_{\eta}}}
		\\[7pt]
		& \leq \limsup_{u\to\infty}\frac{\pk{\exists t\in[0,T]:~\vk W(t)>u\vk 1_{\eta}}}{\pk{ \vk W(T)>u\vk 1_{\eta}}}
			\leq H(L)+C_1e^{-C_2L}.
	\end{align*}
	Pushing further $L\to\infty$ and applying \nelem{Pit_constant} we obtain
%using that	$C_2>0$
	\begin{eqnarray*}
		\mathcal{H}_{P_\eta,1/\mu_0(T)}
		\leq \liminf_{u\to\infty}\frac{\pk{\exists t\in[0,T]:~\vk W(t)>u\vk 1_{\eta}}}{\pk{\vk W(T)>u\vk 1_{\eta}}}
		\leq \limsup_{u\to\infty}\frac{\pk{\exists t\in[0,T]:~\vk W(t)>u\vk 1_{\eta}}}{\pk{ \vk W(T)>u\vk 1_{\eta}}}
		\leq \mathcal{H}_{P_\eta,1/\mu_0(T)}.
	\end{eqnarray*}
	Hence, the claim follows.
\end{proof}

\begin{proof}[Proof of Corollary~\ref{random_N}]
Let us define $N^*_i=\operatorname{essinf} (N_i)$ and take
$\tilde{\vk N}_1,\tilde{\vk N}_2\in\N^\eta$ such that
$\tilde{\vk N}_1\geq \tilde{\vk N}_2$ and $\tilde{\vk N}_1\not= \tilde{\vk N}_2$.
Then we can show the following inequality
\begin{eqnarray}\label{compare_N}
	\pk{\exists \, t\in[0,T] \colon \vk W(t)>u\vk 1_\eta \ \middle| \  \vk N=\tilde{\vk N}_1}
	\leq\pk{\exists \, t\in[0,T] \colon \vk W(t)>u\vk 1_\eta \ \middle| \  \vk N=\tilde{\vk N}_2}.
\end{eqnarray}
To show~\eqref{compare_N} it is enough to consider
$\tilde{\vk N}_1,\tilde{\vk N}_2$ such that
\begin{align*}
	\tilde{\vk N}_1-\tilde{\vk N}_2=\vk e_j
\end{align*}
for some $j\in\{1,\ldots,\eta\}$, where $\vk e_j$ is the $j$-th basis vector in $\N^\eta$. It
is easy to see that the process $\vk W(t) \mid \vk N=\tilde{\vk N}_2$
may be obtained from $\vk W(t) \mid \vk N=\tilde{\vk N}_1$ by deleting some
branches. Thus, the inequality~\eqref{compare_N} follows.

Hence,
\begin{align*}
	&
	\pk{\vk N=\vk N^*} \,
	\pk{\exists \, t\in[0,T] \colon \vk W(t)>u\vk{1}_\eta \mid \vk N=\vk N^*}
	\\[7pt]
	& \hspace{50pt}
	\leq \pk{\exists \, t\in[0,T] \colon \vk W(t)>u\vk{1}_\eta}
	\\[7pt]
  & \hspace{50pt}
	\leq \pk{\vk N=\vk N^*} \,
	\pk{\exists \, t\in[0,T] \colon \vk W(t)>u\vk{1}_\eta \mid \vk N=\vk N^*}
	\\[7pt]
	& \hspace{70pt}
	+\sum_{i=1}^\eta
	\pk{\vk N\geq\vk N^*+\vk e_i} \,
	\pk{\exists \, t\in[0,T] \colon \vk W(t)>u\vk{1}_\eta \mid \vk N=\vk N^*+\vk e_i},
\end{align*}
and the claim follows from \netheo{main} and the fact that
\begin{equation*}
	\pk{\vk{W}(T)>u\vk{1}_\eta \mid \vk{N}=\vk N^* + \vk e_i}
	=o\left(\pk{\vk{W}(T)>u\vk{1}_\eta \mid \vk{N}=\vk N^*}\right).
\end{equation*}
To show the latter, we use \neprop{w(t)_asympt} as follows. Combining
\begin{align*}
	\left( P_\eta \mid \vk N=\vk N^*+\vk e_i \right)
	= \left( 1 + \frac{1}{N^*_i}\right)
	\left(P_\eta \mid \vk N=\vk N^*\right),
\end{align*}
with
\begin{align*}
	\left( \mu_0(T)\mid \vk N=\vk N^*+\vk e_i \right)
	& = \left( \mu_0(T)\mid \vk N=\vk N^* \right)
		+\sum_{l=1}^i (\tau_l-\tau_{l-1})\prod_{\substack{j=l\\j\not= i}}^\eta N_j\\
	& = \left( \mu_0(T)\mid \vk N=\vk N^* \right)
		+\frac{1}{N_i}\sum_{l=1}^i (\tau_l-\tau_{l-1})\prod_{j=l}^\eta N_j\\
	& \leq \left(1+\frac{1}{N_i}\right)
		\left( \mu_0(T) \mid \vk N=\vk N^* \right),
\end{align*}
we obtain
\begin{align*}
	\left( \frac{P_\eta}{\mu_0(T)} \, \middle| \, \vk N=\vk N^*+\vk e_i \right)
	\leq \left( \frac{P_\eta}{\mu_0(T)} \, \middle| \, \vk N=\vk N^* \right),
\end{align*}
and invoke \neprop{w(t)_asympt}.
\end{proof}

\begin{proof}[Proof of Corollary ~\ref{ruintime}]
Using  \kk{\netheo{main}}, we obtain
\begin{multline*}
\mathbb{P} \left\{ u^2 ( T - \mathcal{T} ( u ) ) \geq x \ \middle| \ \mathcal{T} ( u ) \leq T - \frac{y}{u^2} \right\}
= \frac{
\mathbb{P} \left\{ \mathcal{T} ( u ) \leq T - x u^{-2} \right\}
}{
\mathbb{P} \left\{ \mathcal{T} ( u ) \leq T - y u^{-2} \right\}
}
\\[7pt]
\sim
\frac{
	\mathcal{H}_{P_{\eta}, 1/\mu_0 ( T - y u^{-2} )} \,
	\mathbb{P} \left\{ \vk{W} ( T - x u^{-2} ) > u \, \vk{1}_{\eta} \right\}
}{
	\mathcal{H}_{P_{\eta}, 1/\mu_0 ( T - x u^{-2} )} \,
	\mathbb{P} \left\{ \vk{W} ( T - y u^{-2} ) > u \, \vk{1}_{\eta} \right\}
}
\sim
\frac{
\mathbb{P} \left\{ \vk{W} ( T - x u^{-2} ) > u \, \vk{1}_{\eta} \right\}
}{
\mathbb{P} \left\{ \vk{W} ( T - y u^{-2} ) > u \, \vk{1}_{\eta} \right\}
}.
\end{multline*}
Applying \neprop{w(t)_asympt}, we find that the latter is asymptotically
equivalent to
\begin{multline*}
 \frac{
		\mu_0^{P_{\eta} - 1/2} ( T - y u^{-2} )
		\prod_{v = 1}^{\eta} \mu_v^{( P_v - P_{v - 1} ) / 2} ( T - x u^{-2} )
	}{
		\mu_0^{P_{\eta} - 1/2} ( T - y u^{-2} )
		\prod_{v = 1}^{\eta} \mu_v^{( P_v - P_{v - 1} ) / 2} ( T - y u^{-2} )
	}
  \exp \left(
		-\frac{( u + c ( T - x u^{-2} ) )^2 P_{\eta}}{2 \mu_0 ( T - x u^{-2} )}
		+\frac{( u + c ( T - y u^{-2} ) )^2 P_{\eta}}{2 \mu_0 ( T - y u^{-2} )}
	\right).
\end{multline*}
Since the pre-exponential factor clearly tends to \( 1 \), it remains to compute
the asymptotics of the exponent. We have
\begin{multline*}
	-\frac{( u + c ( T - x u^{-2} ) )^2 P_{\eta}}{2 \mu_0 ( T - x u^{-2} )}
	+\frac{( u + c ( T - y u^{-2} ) )^2 P_{\eta}}{2 \mu_0 ( T - y u^{-2} )}
	\\[7pt]
	\sim
	-\frac{
		( u + c T )^2 P_{\eta}
	}{2 \mu_0 ( T - x u^{-2} ) \mu_0 ( T - y u^{-2} )}
	\Big[ \mu_0 ( T - y u^{-2} ) - \mu_0 ( T - x u^{-2} ) \Big].
\end{multline*}
It thus remains to note that
\begin{equation*}
	\mu_0 ( T - y u^{-2} ) - \mu_0 ( T - x u^{-2} )
	\sim  u^{-2} ( x - y )
  \quad \text{and} \quad
	\mu_0 ( T - x u^{-2} ), ~\mu_0 ( T - y n^{-2} ) \sim \mu_0 ( T )
\end{equation*}
to conclude the proof.
\end{proof}

\subsection{Proof of Theorem~\ref{forest_ruin}}
We note that
\kk{
\begin{eqnarray*}
	\pk{\exists \, i\in\{1,\ldots,M\}, \, t\in[0,T] \colon \vk W_i(t)>u\vk 1_{\eta_i}}
&\geq&
	\sum_{i=1}^M \pk{\exists \, t\in[0,T] \colon \vk W_i(t)>u\vk 1_{\eta_i}}\\
	&&
-\sum_{\substack{i,j=1\\i\not= j}}^M
	\pk{
		\begin{aligned}
			\exists \, t_1 \in [0, T] \colon \vk{W}_i ( t_1 ) > u \vk{1}_{\eta_i} \\
			\exists \, t_2 \in [0, T] \colon \vk{W}_j ( t_2 ) > u \vk{1}_{\eta_j}
		\end{aligned}
	}
\end{eqnarray*}
and
\begin{eqnarray*}
	\pk{\exists \, i\in\{1,\ldots,M\}, \, t\in[0,T] \colon \vk W_i(t)>u\vk 1_{\eta_i}}
		\leq \sum_{i=1}^M \pk{\exists \, t\in[0,T] \colon \vk W_i(t)>u\vk 1_{\eta_i}}.
\end{eqnarray*}
}
Using that $\vk W_i$ are independent,
\begin{eqnarray*}
\lefteqn{
	\pk{\exists \, t_1, \, t_2\in[0,T] \colon \vk W_i(t_1)>u\vk 1_{\eta_i}, \ \vk W_j(t_2)>u\vk 1_{\eta_j}}}\\
	&=& \pk{\exists \, t\in[0,T] \colon \vk W_i(t)>u\vk 1_{\eta_i}}
	\pk{\exists \, t\in[0,T] \colon \vk W_j(t)>u\vk 1_{\eta_j}}\\
	&&
	+o\left(
	\pk{\exists \, t\in[0,T] \colon \vk W_i(t)>u\vk 1_{\eta_i}}
	+\pk{\exists \, t\in[0,T] \colon \vk W_j(t)>u\vk 1_{\eta_j}}
	\right).
\end{eqnarray*}
We find
\begin{align*}
	\pk{\exists \, i\in\{1,\ldots,M\}, \, t\in[0,T] \colon \vk W_i(t)>u\vk 1_{\eta_i}}
	\sim \sum_{i=1}^M \pk{\exists \, t\in[0,T] \colon \vk W_i(t)>u\vk 1_{\eta_i}}.
\end{align*}
Finally, \nelem{tree_compare} implies that
\begin{align*}
	\sum_{i\not\in A} \pk{\exists \, t\in[0,T] \colon \vk W_i(t)>u\vk 1_{\eta_i}}
	= o\left( \sum_{i\in A}\pk{\exists \, t\in[0,T] \colon \vk W_i(t)>u\vk 1_{\eta_i}} \right),
\end{align*}
establishing the proof.
\QED

\section{Appendix}
\label{sec:auxiliary-proofs}
\begin{proof}[Proof of Proposition~\ref{b_decomp}]
Using mathematical induction, we shall show that for all
$i\in\{1,\ldots,N\}$ holds
\begin{equation}
	\label{b_tau}
	B_\gamma(\tau_i)=B_0^*(\tau_1)+\sum_{j=1}^{i-1}(\vk{B}^*_{j})_{(\gamma \bmod P_j)+1}(\tau_{j+1}-\tau_j).
\end{equation}
Since for $i=1$ the claim is clear, assuming it holds for $i=k$, we have for
$i=k+1$
\begin{align}\label{eq:3}
	B_{\gamma} ( \tau_{k + 1} )
	& = \big( \widetilde{\vk{B}}_{\Gamma} \big)_{( \gamma \bmod P_k ) + 1} ( \tau_{k + 1} )
	\\[7pt]
	& =
	\begin{pmatrix}
		\widetilde{\vk{B}}_{\Gamma} ( \tau_k ) \\
		\vdots \\
		\widetilde{\vk{B}}_{\Gamma} ( \tau_k )
	\end{pmatrix}_{( \gamma \bmod P_k ) + 1}
  + \big( \vk{B}^{*}_k \big)_{( \gamma \bmod P_k ) + 1} ( \tau_{k + 1} - \tau_k ).
\end{align}
For the first term we have
\begin{align*}
	\begin{pmatrix}
		\widetilde{\vk{B}}_{\Gamma} ( \tau_k ) \\
		\vdots \\
		\widetilde{\vk{B}}_{\Gamma} ( \tau_k )
	\end{pmatrix}_{( \gamma \bmod P_k ) + 1}
  & =
	\big( \widetilde{\vk{B}}_{\Gamma} ( \tau_k ) \big)_{\big( ( \gamma_k \bmod P_k ) \bmod P_{k - 1} \big) + 1}
	\\[7pt]
	& =
	\big( \widetilde{\vk{B}}_{\Gamma} ( \tau_k ) \big)_{( \gamma_k \bmod P_{k - 1} ) + 1}
	= B_{\gamma} ( \tau_k ).
\end{align*}
By the induction hypothesis,
\begin{equation}\label{eq:2}
	B_{\gamma} ( \tau_k )
	= B_0^{ * } ( \tau_1 )
	+\sum_{j = 1}^{k - 1} \big( \vk{B}_j^{ * } \big)_{( \gamma \bmod P_j ) + 1} ( \tau_{j + 1} - \tau_j ).
\end{equation}
Subsume the second term of~\eqref{eq:3} into the sum~\eqref{eq:2} and note that
this conclues the proof of this assertion for \( t = \tau_{k + 1} \). Similarly, if
\( t \in ( \tau_1, T ] \), then
\begin{equation*}
	B_{\gamma} ( t ) =
	\begin{pmatrix}
		\widetilde{\vk{B}}_{\Gamma} ( \tau_{i ( t )} ) \\
		\vdots \\
		\widetilde{\vk{B}}_{\Gamma} ( \tau_{i ( t )} )
	\end{pmatrix}_{( \gamma \bmod P_{i ( t )} ) + 1}
	+\big( \vk{B}^{ * }_k \big)_{( \gamma \bmod P_{i ( t )} ) + 1} ( t - \tau_{i ( t )} ),
\end{equation*}
and therefore
\begin{equation*}
	B_{\gamma} ( t )
	=
	B_0^{ * } ( \tau_1 )
	+\sum_{j = 1}^{i ( t ) - 1} \big( \vk{B}^{ * }_j \big)_{( \gamma \bmod P_j ) + 1} ( \tau_{j + 1} - \tau_j )
	+\big( \vk{B}^{ * }_{i ( t )} \big)_{( \gamma \bmod P_{i ( t )} ) + 1} ( t - \tau_{i ( t )} ).
\end{equation*}
Thus, the claim~\eqref{b_sum} follows, justifying that $B_\gamma$ is a Brownian
motion.
\bigskip

Consider now the assertion~\eqref{cov_formula}. Let $t_1\leq\tau_{\kappa(\gamma_1,\gamma_2)}$. Using that
$B_{\gamma_1}(t)=B_{\gamma_2}(t)$ for any $t\leq\tau_{\kappa(\gamma_1,\gamma_2)}$, we have
\begin{align*}
	B_{\gamma_1}(t_1)=B_{\gamma_2}(t_1).
\end{align*}
Since $B_{\gamma_2}$ is a Brownian motion, we obtain
\begin{align*}
	\Cov(B_{\gamma_1}(t_1),B_{\gamma_2}(t_2))
	= \Cov(B_{\gamma_2}(t_1),B_{\gamma_2}(t_2))
	= \min\{t_1,t_2\}
	= \min\{t_1,t_2,\tau_{\kappa(\gamma_1,\gamma_2)}\}.
\end{align*}
The same holds in the case $t_2\leq\tau_{\kappa(\gamma_1,\gamma_2)}$. In the case
$\tau_{\kappa(\gamma_1,\gamma_2)}<t_1,t_2$, using~\eqref{b_sum}, we can see that
\begin{multline*}
	B_{\gamma_1}(t_1)
	=
	B_0^*(\tau_1)
	+\sum_{j=1}^{i(t_1)-1}
	\big( \vk{B}_j^{*} \big)_{(\gamma_1 \bmod P_{j})+1}(\tau_{j+1}-\tau_j)
	+\big( \vk{B}^*_{i(t_1)} \big)_{(\gamma_1 \bmod P_{i(t_1)})+1}(t_1-\tau_{i(t_1)}).
\end{multline*}
Next, we split the middle sum at the branches' separation point
\( \kappa ( \gamma_1, \gamma_2 ) \) and use the fact that
\begin{equation*}
	B_0^{*} ( \tau_1 )
	+\sum_{j = 1}^{\kappa ( \gamma_1, \gamma_2 ) - 1} \big( \vk{B}_j^* \big)_{( \gamma_1 \bmod P_j ) + 1} ( \tau_{j + 1} - \tau_j )
	=
	B_{\gamma_1} ( \tau_{\kappa ( \gamma_1, \gamma_2 )} )
\end{equation*}
to obtain
\begin{multline*}
	B_{\gamma_1}(t_1)
	B_{\gamma_1}(\tau_{\kappa(\gamma_1,\gamma_2)})
	+\sum_{j=\kappa(\gamma_1,\gamma_2)}^{i(t_1)-1}
	\big( \vk{B}^*_j \big)_{(\gamma_1 \bmod P_{j})+1}(\tau_{j+1}-\tau_j)
	+\big( \vk{B}^*_{i(t_1)} \big)_{(\gamma_1 \bmod P_{i(t_1)})+1}(t_1-\tau_{i(t_1)}).
\end{multline*}
By the same reason,
\begin{multline*}
	B_{\gamma_2}(t_2)
	= B_{\gamma_2}(\tau_{\kappa(\gamma_1,\gamma_2)})
	+\sum_{j=\kappa(\gamma_1,\gamma_2)}^{i(t_2)-1}
	\big( \vk{B}^*_j \big)_{(\gamma_2 \bmod P_{j})+1}(\tau_{j+1}-\tau_j)
	+\big( \vk{B}^*_{i(t_2)} \big)_{(\gamma_2 \bmod P_{i(t_2)})+1}(t_2-\tau_{i(t_2)}).
\end{multline*}
We have used the fact that for all indices $j\geq\kappa(\gamma_1,\gamma_2)$ holds
$(\gamma_1 \bmod P_j)\not= (\gamma_2 \bmod P_j)$, so the sums
\begin{align*}
	\sum_{j=\kappa(\gamma_1,\gamma_2)}^{i(t_1)-1}
	\big( \vk{B}^*_j \big)_{(\gamma_1 \bmod P_j)+1}(\tau_{j+1}-\tau_j)
	+\big( \vk{B}^*_{i(t_1)} \big)_{(\gamma_1 \bmod P_{i(t_1)})+1}(t_1-\tau_{i(t_1)})
\end{align*}
and
\begin{align*}
	\sum_{j=\kappa(\gamma_1,\gamma_2)}^{i(t_2)-1}
	\big( \vk{B}^*_j \big)_{(\gamma_2 \bmod P_j)+1}(\tau_{j+1}-\tau_j)
	+\big( \vk{B}^*_{i(t_2)} \big)_{(\gamma_2 \bmod P_{i(t_2)})+1}(t_2-\tau_{i(t_2)})
\end{align*}
are independent. Additionally, each of them is independent of
$B_{\gamma_1}(\tau_{\kappa(\gamma_1,\gamma_2)})$, which is equal to $B_{\gamma_2}(\tau_{\kappa(\gamma_1,\gamma_2)})$. Hence,
\begin{eqnarray*}
	\Cov(B_{\gamma_1}(t_1),B_{\gamma_2}(t_2))
	= \Cov(B_{\gamma_1}(\tau_{\kappa(\gamma_1,\gamma_2)}),B_{\gamma_2}(\tau_{\kappa(\gamma_1,\gamma_2)}))
	= \Var(B_{\gamma_1}(\tau_{\kappa(\gamma_1,\gamma_2)}))
	= \tau_{\kappa(\gamma_1,\gamma_2)}.
\end{eqnarray*}
This establishes~\eqref{cov_formula}.
\end{proof}

\begin{proof}[Proof of Proposition~\ref{eigenvalues}]
	We are going to prove the claim of the theorem by induction in
	$t\in\{\tau_0,\ldots,\tau_{\eta}\}$. The case $t=\tau_0$ is clear. Assume that the claim is true
	for $t=\tau_k$. We claim that the eigenvalues of
\begin{equation*}
	\begin{pmatrix}\Sigma(\tau_k) &\ldots & \Sigma (\tau_k) \\ \vdots & & \vdots \\ \Sigma(\tau_k) & \ldots & \Sigma(\tau_k)\end{pmatrix}
\end{equation*}
are the eigenvalues of $\Sigma(\tau_k)$ multiplied by $N_k$. Moreover, they are of at
least the same multiplicity. Indeed, for any eigenvalue $\mu$ and a corresponding
eigenvector $\vk v\in\R^{P_{k-1}}$ of $\Sigma(\tau_k)$ we can construct a vector
$\vk v^*=(\vk v^\top,\ldots,\vk v^\top)^\top\in\R^{P_k}$, which satisfies
\begin{equation*}
	\begin{pmatrix}
		\Sigma(\tau_k) &\ldots & \Sigma (\tau_k) \\ \vdots & & \vdots \\ \Sigma(\tau_k) & \ldots & \Sigma(\tau_k)
	\end{pmatrix}
	\begin{pmatrix}\vk v\\ \vdots \\ \vk v\end{pmatrix}
	=
	\begin{pmatrix}
		N_k\Sigma(\tau_k)\vk v \\ \vdots \\ N_k\Sigma(\tau_k)\vk v
	\end{pmatrix}
	= \begin{pmatrix}N_k\mu\vk v \\ \vdots \\ N_k\mu\vk v\end{pmatrix}
	=N_k \, \mu \, \vk v^*.
\end{equation*}
In particular, it means that as $\vk 1_{k-1}$ is an eigenvector of $\Sigma(\tau_k)$ with
the eigenvalue $\mu_0(\tau_k)$, then $\vk 1_{k}$ is an eigenvector of the matrix
mentioned above with the corresponding eigenvalue $N_k \, \mu_0(\tau_k)$. Since
\begin{align*}
	\rank
	\begin{pmatrix}
		\Sigma(\tau_k) &\ldots & \Sigma (\tau_k) \\ \vdots & & \vdots \\ \Sigma(\tau_k) & \ldots & \Sigma(\tau_k)
	\end{pmatrix}
	= \rank \Sigma(\tau_k)
	= P_{k-1},
\end{align*}
\( 0 \) is an eigenvalue of the matrix mentioned above of multiplicity
$P_k-P_{k-1}$. Using Proposition~\ref{Sigma_rec} for any eigenvalue $\mu_v(\tau_k)$
of $\Sigma(\tau_k)$ we can find an eigenvalue $\mu_v(\tau_{k+1})$ of $\Sigma(\tau_{k+1})$ as follows
\begin{align*}
	\mu_v(\tau_{k+1})&=N_k\mu_v(\tau_k)+(\tau_{k+1}-\tau_k)\\
	&=(\tau_{k+1}-\tau_k)+N_k\left((\tau_{k}-\tau_{k-1})+\sum_{l=v+1}^{k-1}(\tau_l-\tau_{l-1})\prod_{j=l}^{k-1}N_j\right)\\
	&=(\tau_{k+1}-\tau_k)+N_k(\tau_k-\tau_{k-1})+\sum_{l=v+1}^{k-1}(\tau_l-\tau_{l-1})\prod_{j=l}^{k}N_j\\
	&=(\tau_{k+1}-\tau_k)+\sum_{l=v+1}^{k}(\tau_l-\tau_{l-1})\prod_{j=l}^{k}N_j.
\end{align*}
The multiplicity of $\mu_v(\tau_{k+1})$ equals to the multiplicity of $\mu_v(\tau_k)$,
i.e. $P_{v}-P_{v-1}$. In particular, for $i=0$ we have that $\mu_0(\tau_{k+1})$ has
exactly one eigenvector $\vk 1_k$. Additionally, $\Sigma(\tau_{k+1})$ has the eigenvalue
$\tau_{k+1}-\tau_k$ with multiplicity $P_k-P_{k-1}$. Hence, the claim follows for the
matrix $\Sigma(\tau_{k+1})$. Finally, using Proposition~\ref{Sigma_rec} we can find
eigenvalues and eigenvectors of $\Sigma(t)$ if we know them for $\Sigma(\tau_{i(t)})$.
\end{proof}

\begin{proof}[Proof of Proposition~\ref{w(t)_asympt}]
Using that $\vk\lambda=\Sigma^{-1}\vk 1_\eta$ and \neprop{eigenvalues}
we obtain
\begin{align*}
	\vk\lambda = \frac{1}{\mu_0(T)} \, \vk 1_\eta,
	\qquad \vk 1_\eta^\top\Sigma^{-1}(T)\vk 1_\eta =\frac{P_\eta}{\mu_0(T)}.
\end{align*}
Additionally, using \neprop{eigenvalues} we know that
\begin{align*}
	\abs{\Sigma(T)}=\mu_0(T)\prod_{v=1}^{\eta}\mu_v^{P_v-P_{v-1}}(T).
\end{align*}
Hence, the claim follows from~\eqref{gauss_assympotics}:
\begin{eqnarray*}
	\pk{\vk B_\Gamma(T)-cT\vk 1_{\eta}>u\vk 1_{\eta}}
	&\sim&
	\frac{1}{\prod_{i=1}^{P_\eta}\lambda_i}u^{-P_\eta}\varphi_T(u\vk 1_\eta)
	\\
	&=&
	\frac{u^{-P_{\eta}}}{\prod_{i=1}^{P_\eta}\lambda_i}
	\frac{1}{(2\pi)^{P_\eta/2}\abs{\Sigma(T)}^{1/2}}
	\exp\left(-\frac{1}{2}(u+cT)^2\vk 1_\eta^\top\Sigma^{-1}(T)\vk 1_\eta\right).
\end{eqnarray*}
Recall that $\varphi(T)$ stands for the pdf of $\vk B_\Gamma(T)-cT\vk 1_\eta$.
\end{proof}

\begin{proof}[Proof of Lemma~\ref{quadratic_solution_prop}]
	Note that any element $\gamma\in\Gamma$ has unique representation of the following form
\begin{align*}
	\gamma=\sum_{i=1}^{\eta} a_iP_{i-1},
\end{align*}
where $a_i\in\{0,1,\ldots,N_i-1\}$. Hence, the map
\begin{align*}
	\gamma \mapsto ( a_1(\gamma),a_2(\gamma),\ldots,a_\eta(\gamma) )
\end{align*}
is a bijection. Let us introduce the following class of permutations on \( \Gamma \):
\begin{equation*}
	\pi_{j,b,c}(\gamma)=\gamma^\prime,
	\qquad
  i \in \{1,2,\ldots,\eta\}, \quad
	b, \, c \in\{0,1,\ldots,N_i-1\},
\end{equation*}
where for any $i\in\{1,2,\ldots, \eta\}$
\begin{equation*}
	a_i(\gamma^\prime)
	= \begin{cases}
		a_i(\gamma), & \text{if} \, i\not=j,\\
		a_j(\gamma), & \text {if} \, i=j, \ a_j(\gamma)\not=c \ \text{and} \ a_j(\gamma) \not= b,\\
		b, & \text{if} \, i=j \ \text{and} \ a_j(\gamma)=c,\\
		c, & \text{if} \, i=j \ \text{and} \ a_j(\gamma)=b.\\
	\end{cases}
\end{equation*}
Next, define a new Gaussian random vector
$\tilde{\vk X}(j,b,c)=(\tilde{X}_0,\ldots,\tilde{X}_{P_\eta-1})\in\R^{P_\eta}$
\begin{equation*}
	\tilde X_i=B_{\pi_{j,b,c}(i)}(T)
\end{equation*}
and denote its covariance function by $\Sigma(j,b,c)$. Consider a new quadratic
programming problem (see Lemma~\ref{lem:quadratic_programming})
\begin{align*}
	\Pi_{\Sigma(j,b,c)}(\vk 1_\eta).
\end{align*}
Define its corresponding sets by $I(j,b,c)$ and $J(j,b,c)$. Using that our
vector $\tilde{\vk X}(j,b,c)$ is a permutation of $\vk B_\Gamma(T)$,
\begin{align*}
	I(j,b,c)=\pi_{j,b,c}(I).
\end{align*}
On the other hand, from the definition of $\pi_{j,b,c}$ we have that for any
$i\in\{1,\ldots,\eta\}$
\begin{align*}
	a_i(\gamma_1)=a_i(\gamma_2) \iff  a_i(\pi_{j,b,c}(\gamma_1))=a_i(\pi_{j,b,c}(\gamma_2)).
\end{align*}
Using the formula for $\gamma$ in terms of $a_i(\gamma)$ we obtain that
\begin{align*}
	\gamma_1=\gamma_2\bmod P_i \iff \forall\, l\in\{1,2,\ldots, i\}~a_l(\gamma_1)=a_l(\gamma_2).
\end{align*}
Combining these two facts, we find
\begin{align*}
	\gamma_1=\gamma_2\bmod P_i \iff \pi_{j,b,c}(\gamma_1)=\pi_{j,b,c}(\gamma_2)\bmod P_i,
\end{align*}
which implies that
\begin{align*}
	\kappa(\gamma_1,\gamma_2)=\kappa(\pi_{j,b,c}(\gamma_1),\pi_{j,b,c}(\gamma_2)).
\end{align*}
Consequently, we have
\begin{eqnarray*}
	\Cov(\tilde X_{\gamma_1},\tilde X_{\gamma_2})
	  &=& \Cov(B_{\pi_{j,b,c}(\gamma_1)}(T),B_{\pi_{j,b,c}(\gamma_2)}(T))
	\\
		&=& \min \{ T,\tau_{\kappa(\pi_{j,b,c}(\gamma_1),\pi_{j,b,c}(\gamma_2))} \}\\
	  &=& \min \{ T,\tau_{\kappa(\gamma_1,\gamma_2)} \}
		= \Cov(B_{\gamma_1}(T),B_{\gamma_2}(T)).
\end{eqnarray*}
Hence $\Sigma(j,b,c)=\Sigma(T)$, and
\begin{equation*}
	I(j,b,c)=I.
\end{equation*}
It means that for any $j\in\{1,\ldots,\eta\}$, $b,c\in\{0,\ldots,N_j\}$
\begin{align*}
	\pi_{j,b,c}(I)=I.
\end{align*}
Finally, let us notice that for any $\gamma_1, \, \gamma_2\in\Gamma$ we can find a sequence of the
permutations of the class mentioned above which send $\gamma_1$ into $\gamma_2$:
\begin{equation*}
	\gamma_2
	=\pi_{1,a_1(\gamma_1),a_1(\gamma_2)}(\pi_{2,a_2(\gamma_1),a_2(\gamma_2)}(\cdots \pi_{\eta,a_\eta(\gamma_1),a_\eta(\gamma_2)}(\gamma_1)\cdots )).
\end{equation*}
This implies that set $I$ can either be empty, or contain all elements of $\Gamma$.
Since by \nelem{lem:quadratic_programming} it cannot be empty, the claim
follows.
\end{proof}

Below, we cite the result from~\cite[Lemma 4]{DHJT15}.

\begin{lem}\label{lem:quadratic_programming}  Let $d \geq 2$ and
	$\Sigma$ a $d \times d$ symmetric positive definite matrix with inverse $\Sigma^{-1}$. If
	$\vk{a}\in\R^d \setminus (-\infty, 0]^d $, then the quadratic programming problem
	$\Pi_{\Sigma}(\vk b)$ defined in~\eqref{eq:QP} has a unique solution $\tilde{\vk a}$
	and there exists a unique non-empty index set $I\subset \{1 \ldot d\}$ with
	$|I| \le d$ elements such that
	\begin{itemize}
		\item[(i)] $\tilde{\vk{a}}_{I} = \vk{a}_{I} \not=\vk{0}_I$;
		\item[(ii)] $\tilde{\vk{a}}_{J} = \Sigma_{IJ}^{-1} \Sigma_{II}^{-1} \vk{a}_{I}\ge \vk{a}_{J}$, and $\Sigma_{II}^{-1} \vk{a}_{I}>\vk{0}_I$;
		\item[(iii)] $\min_{\vk{x} \ge \vk{a}}\vk{x}^\top \Sigma^{-1}\vk{x}
		= \tilde{\vk{a}} ^\top \Sigma^{-1} \tilde{\vk{a}}
		= \vk{a} ^\top \Sigma^{-1} \tilde{\vk{a}}
		= \vk{a}_{I}^\top \Sigma_{II}^{-1}\vk{a}_{I}>0$,
	\end{itemize}
	with $\vk{\lambda}= \Sigma^{-1} \tilde{\vk{a}}$ satisfying $\vk{\lambda}_I= \Sigma_{II}^{-1} \vk{a}_I> \vk 0_I$, and $\vk{\lambda}_J= \vk{0}_J$.
\end{lem}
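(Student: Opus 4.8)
The plan is to regard $\Pi_\Sigma(\vk a)$ as a convex quadratic program and read off the stated structure from its Karush--Kuhn--Tucker (KKT) system. First I would observe that the feasible polyhedron $\{\vk x\ge\vk a\}$ is closed, convex and non-empty, while $\vk x\mapsto\vk x^\top\Sigma^{-1}\vk x$ is strictly convex (since $\Sigma^{-1}$ is positive definite) and coercive; hence a unique minimiser $\tilde{\vk a}$ exists. Because all constraints are affine a constraint qualification holds automatically, so the KKT conditions are both necessary and sufficient: there is a multiplier $\vk\lambda\ge\vk 0$ with $\Sigma^{-1}\tilde{\vk a}=\vk\lambda$, $\tilde{\vk a}\ge\vk a$, and $\lambda_i(\tilde a_i-a_i)=0$ for every $i$. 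Setting $\vk\lambda:=\Sigma^{-1}\tilde{\vk a}$ I would define $I:=\{i:\lambda_i>0\}$ and $J:=\{1\ldot d\}\setminus I$; complementary slackness then already yields $\tilde{\vk a}_I=\vk a_I$ and $\vk\lambda_J=\vk 0_J$, and trivially $1\le|I|\le d$ once non-emptiness is shown.

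Next I would recover the remaining blocks. Writing $\Sigma^{-1}$ in $(I,J)$-block form and using $\vk 0_J=\vk\lambda_J=(\Sigma^{-1})_{JI}\tilde{\vk a}_I+(\Sigma^{-1})_{JJ}\tilde{\vk a}_J$ together with the standard Schur-complement identities linking the blocks of $\Sigma^{-1}$ to those of $\Sigma$, one obtains $\tilde{\vk a}_J=\Sigma_{JI}\Sigma_{II}^{-1}\vk a_I$ and, feeding this back, $\vk\lambda_I=\Sigma_{II}^{-1}\vk a_I$. Since $i\in I$ means $\lambda_i>0$, this gives $\Sigma_{II}^{-1}\vk a_I>\vk 0_I$, which is precisely (ii); it also forces $\vk a_I\neq\vk 0_I$ (otherwise $\vk\lambda_I=\vk 0_I$), giving (i); and $\tilde{\vk a}_J\ge\vk a_J$ because $\tilde{\vk a}$ is feasible. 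For (iii), using $\vk\lambda_J=\vk 0_J$ and $\tilde{\vk a}_I=\vk a_I$,
\[
\min_{\vk x\ge\vk a}\vk x^\top\Sigma^{-1}\vk x=\tilde{\vk a}^\top\Sigma^{-1}\tilde{\vk a}=\tilde{\vk a}^\top\vk\lambda=\vk a_I^\top\vk\lambda_I=\vk a_I^\top\Sigma_{II}^{-1}\vk a_I,
\]
and the same chain shows $\vk a^\top\Sigma^{-1}\tilde{\vk a}=\vk a_I^\top\Sigma_{II}^{-1}\vk a_I$, strictly positive since $\Sigma_{II}^{-1}$ is positive definite and $\vk a_I\neq\vk 0_I$. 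Non-emptiness of $I$ then follows from $\vk a\notin(-\infty,0]^d$: if $I=\varnothing$ then $\vk\lambda=\vk 0$, hence $\tilde{\vk a}=\vk 0$, contradicting $\tilde{\vk a}\ge\vk a$ when some $a_j>0$.

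It remains to argue uniqueness of $I$. Here I would use that the minimiser $\tilde{\vk a}$ is unique, so $\vk\lambda=\Sigma^{-1}\tilde{\vk a}$ is uniquely determined. If some non-empty $I'$ satisfies the full list of conditions, then the accompanying requirements on $\vk\lambda=\Sigma^{-1}\tilde{\vk a}$ force $\vk\lambda_{I'}=\Sigma_{I'I'}^{-1}\vk a_{I'}>\vk 0_{I'}$ and $\vk\lambda_{J'}=\vk 0_{J'}$; comparing with the fixed $\vk\lambda$ shows $I'\subseteq\{i:\lambda_i>0\}$ and $\{i:\lambda_i>0\}\cap J'=\varnothing$, whence $I'=\{i:\lambda_i>0\}=I$. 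I expect the only genuinely delicate points to be keeping the Schur-complement block algebra straight and---crucially for uniqueness---exploiting the \emph{strict} inequality $\Sigma_{II}^{-1}\vk a_I>\vk 0_I$ in (ii), which is what excludes the degenerate indices (those with $\lambda_i=0$ yet $\tilde a_i=a_i$) from $I$ and thereby pins $I$ down.
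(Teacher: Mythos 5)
Your argument is correct, but note that the paper does not prove this lemma at all: it is quoted verbatim from~\cite[Lemma 4]{DHJT15}, so there is no in-paper proof to compare with. Your KKT route is a complete, self-contained substitute and is essentially the standard argument behind the cited result: existence/uniqueness from strict convexity and coercivity, stationarity $\Sigma^{-1}\tilde{\vk a}=\vk\lambda\ge\vk 0$ (you absorb the irrelevant factor $2$ into the multiplier, which is harmless), complementary slackness defining $I=\{i\colon\lambda_i>0\}$, and then the block identity $\bigl((\Sigma^{-1})_{JJ}\bigr)^{-1}(\Sigma^{-1})_{JI}=-\Sigma_{JI}\Sigma_{II}^{-1}$ to get $\tilde{\vk a}_J=\Sigma_{JI}\Sigma_{II}^{-1}\vk a_I$ and $\vk\lambda_I=\Sigma_{II}^{-1}\vk a_I$; this is the formula the lemma intends (the transcription $\Sigma_{IJ}^{-1}\Sigma_{II}^{-1}\vk a_I$ in the statement is a notational slip, as your version is the dimensionally and algebraically consistent one). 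Your treatment of the remaining points is also sound: $I\neq\varnothing$ because $I=\varnothing$ would give $\vk\lambda=\vk 0$, hence $\tilde{\vk a}=\vk 0$, contradicting feasibility when some $a_j>0$; positivity in (iii) follows since $\Sigma_{II}$ (hence $\Sigma_{II}^{-1}$) is positive definite and $\vk a_I\neq\vk 0_I$; and uniqueness of $I$ follows because the unique minimiser pins down $\vk\lambda$, and any admissible $I'$ must coincide with $\{i\colon\lambda_i>0\}$ — your remark that the strict inequality in (ii) is what excludes degenerate indices with $\lambda_i=0$ and $\tilde a_i=a_i$ is exactly the right point. In short: correct, and effectively a reproof of the cited lemma rather than a variant of anything in this paper.
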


\section*{Acknowledgements}
Financial support from the Swiss National Science Foundation Grant 1200021-196888 is kindly
acknowledged.
K. D\c{e}bicki
was partially supported by
NCN Grant No 2018/31/B/ST1/00370
(2019-2024).
N. Kriukov has received funding from the European Union's Horizon 2020 research and innovation programme under the Marie Skłodowska-Curie grant agreement Grant Agreement No 101034253.
A substantial part of the project was done when N. Kriukov was with Department of Actuarial Science,
University of Lausanne, Switzerland.

\bibliographystyle{ieeetr}
\bibliography{EEEA_Pavel}
\end{document}